\algnewcommand\RETURN{\STATE\textbf{Return: }}
\newcolumntype{d}[1]{D{.}{.}{#1}}
\newcommand{\rrvert}{\vert}
\newcommand{\rrVert}{\Vert}
\newcommand{\llvert}{\vert}
\newcommand{\llVert}{\Vert}
\renewcommand{\mathring}[1]{\accentset{\circ}{#1}}
\newtheorem{thmm}{Theorem}[section]
\newtheorem{prop}{Proposition}[section]
\newtheorem{lemme}{Lemma}[section]
\newtheorem{lemmee}{Lemma}[section]
\newtheorem{Claim}{Claim}[section]
\newtheorem{Claimm}{Claim}[section]
\renewcommand{\d}{\mathrm{d}}
\newcommand{\R}{\mathbb{R}}
\renewcommand{\P}{\mathbb{P}}
\newcommand{\N}{\mathbb{N}}
\newcommand{\B}{\mathcal{B}}
\newcommand{\E}{\mathbb{E}}
\newcommand{\A}{\mathcal{A}}
\newcommand{\F}{\mathscr{F}}
\renewcommand{\L}{\mathbb{L}}
\newcommand{\M}{\mathcal{M}}
\newcommand{\XX}{\mathbb{X}}
\newcommand{\tttheta}{\hat{\bolds{\theta}}}
\newcommand{\1}{\mathbh{1}}
\newcommand{\ttheta}{\bolds{\theta}}
\newcommand{\Fd}{\mathscr{F}_{\mathrm{dis}}}
\begin{document}
\begin{frontmatter}

\title{Robust estimation on a parametric model via~testing}
\runtitle{Robust estimation on a parametric model via testing}

\begin{aug}
\author[A]{\inits{M.}\fnms{Mathieu}~\snm{Sart}\corref{}\ead[label=e1]{mathieu.sart@univ-st-etienne.fr}}
\address[A]{Universit\'{e} de Lyon,
Universit\'{e} Jean Monnet,
CNRS UMR 5208 and
Institut Camille Jordan,
Maison de l'Universit\'{e}, 10 rue Tr\'{e}filerie,
CS 82301,
42023 Saint-Etienne Cedex 2,
France.\\ \printead{e1}}
\end{aug}

%
\received{\smonth{9} \syear{2013}}
%
\revised{\smonth{1} \syear{2015}}


\begin{abstract}
We are interested in the problem of robust parametric estimation of a
density from $n$ i.i.d. observations. By using a practice-oriented
procedure based on robust tests, we build an estimator for which we
establish non-asymptotic risk bounds with respect to the Hellinger
distance under mild assumptions on the parametric model. We show that
the estimator is robust even for models for which the maximum
likelihood method is bound to fail. A numerical simulation illustrates
its robustness properties. When the model is true and regular enough,
we prove that the estimator is very close to the maximum likelihood
one, at least when the number of observations $n$ is large. In
particular, it inherits its efficiency. Simulations show that these two
estimators are almost equal with large probability, even for small
values of~$n$ when the model is regular enough and contains the true density.
\end{abstract}

%
\begin{keyword}
\kwd{parametric estimation}
\kwd{robust estimation}
\kwd{robust tests}
\kwd{T-estimator}
\end{keyword}
\end{frontmatter}

\section{Introduction}\label{sec1}
We consider $n$ independent and identically distributed random
variables $X_1,\ldots,X_n$ defined on an abstract probability space
($\Omega, \mathcal{E},\P)$ with values in the measure space $(\XX
,\mathcal{F},\mu)$. We suppose that the distribution of $X_i$ admits a
density $s$ with respect to $\mu$ and aim at estimating $s$ by using a
parametric approach.

\subsection{About the maximum likelihood estimator}
The maximum likelihood method is one of the most widespread estimation
methods to deal with this statistical setting.
Indeed, it is well known that it provides estimators with nice
statistical properties when the parametric model is true and regular enough.

Nevertheless, it is also recognized that it 
breaks down for many parametric models $\F$ of interest. A simple one
is the translation model $\F= \{f (\cdot- \theta),   \theta\in
\Theta\}$ where $\lim_{x \rightarrow0} f(x) = +\infty$, in which the
maximum likelihood estimator (m.l.e. for short) does not exist. Other
counterexamples may be found
in Pitman \cite{Pitman1979}, Ferguson \cite{Ferguson1982}, Le Cam
\cite
{Lecam1990mle}, Birg{\'e} \cite{BirgeTEstimateurs} among other references.

Another known defect of the m.l.e. is its lack of robustness. This
means that if the assumption that $s$ belongs to the parametric model
$\F$ is only slightly violated, the m.l.e. may perform poorly. As an
example, consider the model $\F= \{\theta^{-1} \1_{[0,\theta]},
\theta> 0\}$, in which the maximum likelihood estimator is $\hat
{\theta
}_{\mathrm{mle}}^{-1} \1_{[0,\hat{\theta}_{\mathrm{mle}}]}$ with $\hat
{\theta}_{\mathrm{mle}} = \max_{1 \leq i \leq n} X_i$. Suppose that the
true density $s$ does not belong to $\F$ but lies in a very small
neighbourhood of it. For instance, assume that $s = (1-p) \1_{[0,1]} +
p 2^{-1} \1_{[0,2]}$ for some $p \in(0,1)$. If $p$ is very small, the
true underlying density $s$ is very close to $\1_{[0,1]} \in\F$ and a
good estimator $\hat{f}$ of $s$ should therefore be also close to $\1
_{[0,1]}$, at least when $n$ is large enough and $p$ is small enough.
Nonetheless, whatever $p > 0$, the estimator $\hat{\theta}_{\mathrm
{mle}}^{-1} \1_{[0, \hat{\theta}_{\mathrm{mle}}]}$ converges almost
surely to $2^{-1} \1_{[0,2]}$ when $n$ goes to infinity.\vspace*{1pt} It is thus a
very poor estimate of $s$ when $p$ is small.

\subsection{Alternative estimators}
Several attempts have been made in the literature to overcome the
difficulties of the maximum likelihood approach.
When the model is regular enough, the classical notion of efficiency
can be used to measure the quality of an estimator (when the model is
not regular enough, the optimal rate of convergence may not be the
usual root-$n$ rate). For these models, the $L$-estimators commonly
accomplish a good trade-off between robustness and efficiency. Some
estimators have the nice feature to be simultaneously robust and
asymptotically efficient. This is the case, for example, of the minimum
Hellinger distance
estimators introduced by Beran \cite{Beran1977} and studied in Donoho
and Liu \cite{Donoho1988}, Lindsay \cite{Lindsay1994} among other
references. We refer to Basu et al. \cite{basu2011} for an introduction
to these estimators.

Things become more complicated when the model is less regular and even
more when the maximum likelihood estimators do not even exist. We do
not know if the aforementioned estimation strategies can be adapted to
cope with these models in a satisfactory way.
Building a robust and optimal estimator is not straightforward in some
models (where ``optimal'' means that it achieves the optimal rate of
convergence when the model holds true). Think, for instance, about the
translation model $\F=  \{f (\cdot- \theta),   \theta\in[-1,
1] \} $ where
%
\begin{equation}
\label{eqModelIrregulier}
f (x) = \cases{ \displaystyle\frac{1}{ 4 \sqrt{|x|}} \1_{[-1,1]} (x),&
\quad$\mbox{for all $x \in\R \setminus\{0\}$,}$ \vspace*{2pt}
\cr
0, &\quad $\mbox{for $x =0$.}$}
\end{equation}
The median is a natural robust estimator, but it converges slowly to
the right parameter since it only reaches the rate $n^{-1}$ whereas the
optimal one is $n^{-2}$.

\subsection{Estimation via testing}
There is in the literature a more or less universal strategy of
estimation that leads to robust and optimal estimators. It even manages
to deal with models for which the maximum likelihood method is bound to
fail. Its basic principle is to use tests to derive estimators.
Historically, this idea of using tests for building estimators dates
back to the 1970s with the works of Lucien Le Cam.
More recently, Birg{\'e} \cite{BirgeTEstimateurs} significantly
extended the scope of these procedures by relating them to the problem
of model selection, providing at the same time new perspectives on
estimation theory.
It gave birth to a series of papers; see Birg{\'e} \cite
{BirgePoisson,Birge2012,BirgeDens},
Baraud and Birg\'{e} \cite{BaraudBirgeHistogramme},
Baraud \cite{BaraudMesure,Baraud2012},
Sart \cite{SartMarkov,Sart2012},
Baraud et al. \cite{RhoEstimation}.
The main feature of these procedures is that they allow to obtain
general theoretical results in various statistical settings (such as
general model selection theorems) which are usually unattainable by the
traditional procedures (such as those based on the minimization of a
penalized contrast).

In density estimation, these papers show that under very mild
assumptions on the parametric model $\F= \{f_{\theta}, \theta\in
\Theta\}$, one can design an estimator $\hat{s} = f_{\hat{\theta}}$
such that
%
\begin{equation}
\label{RelIntro} \P \biggl[C h^2 (s, f_{\hat{\theta}} ) \geq\inf
_{\theta\in\Theta} h^2 (s, f_{\theta}) +
\frac{D_{\F}}{n} + \xi \biggr] \leq \mathrm{e}^{-n \xi} \qquad\mbox{for all $\xi> 0$,}
\end{equation}
where $C$ is a numerical positive constant, $h$ the Hellinger distance,
and $D_{\F}$ measures, in some sense, the ``massiveness'' of $\F$. We
recall that the Hellinger distance is defined on the cone $\L^1_+ (\XX,
\mu)$ of non-negative integrable functions on $\XX$ with respect to
$\mu
$ by
\[
h^2(f,g) = \frac{1}{2} \int_{\XX} \bigl(
\sqrt{f (x)} - \sqrt{g(x)} \bigr)^2 \,\d\mu(x)\qquad \mbox{for all $f,g \in
\L^1_+ (\XX, \mu)$.}
\]

When $s$ does belong to the model $\F$, that is, when there exists
$\theta
_0 \in\Theta$ such that $s = f_{\theta_0}$, the estimator $\hat{s}$
achieves a quadratic risk of order $n^{-1}$ with respect to the
Hellinger distance. Besides, if we can relate the Hellinger distance
$h(f_{\theta_0}, f_{\theta})$ to a distance between the parameters
$\theta_0$, $\theta$, the convergence rate of $\hat{\theta}$
to $\theta
_0$ may be deduced from {(\ref{RelIntro})}. For instance, when $\Theta
\subset\R$, and when there exists $\alpha> 0$ such that $h^2
(f_{\theta_0}, f_{\theta}) \sim|\theta_0 - \theta|^{\alpha}$, the
estimator $\hat{\theta}$ reaches the rate $n^{-1/\alpha}$. When the
model is regular enough, $h^2 (f_{\theta_0}, f_{\theta}) \sim|\theta_0
- \theta|^{2}$, and the estimator $\hat{\theta}$ attains the usual
root-$n$ rate.

It is worth mentioning that one does not have to assume that the
unknown density $s$ belongs to the model, which is important since one
cannot usually ensure that this is the case in practice. We rather use
the model $\F$ as an approximating class (sieve) for $s$. Inequality
(\ref{RelIntro}) shows that the estimator $\hat{s} = f_{\hat{\theta}}$
cannot be strongly influenced by any type of small departures from the
model (measured through the Hellinger distance). As a matter of fact,
if $\inf_{\theta\in\Theta} h^2 (s, f_{\theta}) \leq a n^{-1}$ with $a
> 0$, which means that the model is slightly misspecified, the
quadratic risk of the estimator $\hat{s} = f_{\hat{\theta}}$ remains of
order $n^{-1}$. This can be interpreted as a robustness property (that
is, not shared by the m.l.e.).

\subsection{The purposes of this paper}
One of the most annoying drawbacks of the estimators based on tests is
that their practical construction is numerically very difficult. Two
steps are required to build these estimators. In the first step, we
discretize the model $\F$, that is, we build a thin net $\Fd$ in $\F$
that must be finite or countable. In the second step, we use the tests
to pairwise compare the elements of $\Fd$. Therefore, the number of
tests we need to compute is of the order of the square of the
cardinality of $\Fd$. Unfortunately, this cardinality is often very
large, making the construction of the estimators difficult in practice.
In this paper, we present a new estimation procedure based on the test
designed by Baraud \cite{BaraudMesure} and on an iterative construction
of confidence sets. This procedure does not involve the pairwise
comparison of all the elements of $\Fd$ but only of a small (random and
suitably chosen) part of them, which results in a significant reduction
of the numerical complexity. In particular, this makes it possible to
evaluate the quality of the estimator by means of numerical simulations
in situations where the procedure
of Baraud \cite{BaraudMesure} would have required the computation of an
intractable number of tests.

This estimation procedure outperforms the maximum likelihood one in
many aspects.
Similarly to the procedure of Baraud \cite{BaraudMesure}, the estimator
$\hat{s} = f_{\hat{\theta}}$ exists in parametric models where the
m.l.e. does not. We establish a risk bound akin to {(\ref{RelIntro})}.
In particular, when the model $\F$ is true, that is, when there exists
$\theta_0 \in\Theta$ such that $s = f_{\theta_0} \in\F$, the
estimator $\hat{\theta}$ converges to the true parameter $\theta_0$ at
the right rate of convergence. When the model is only approximately
true, which means that the Hellinger distance between $s$ and the model
$\F$ is small, the estimator $\hat{s}$ of $s$ still performs well.

An additional significant property of this estimator is that it
essentially coincides with the m.l.e. (with large probability), when
the model is true and regular enough, even when the number of
observations $n$ is small. It seems to be, in this case, as good as the
m.l.e. This property was brought to light by numerical simulations in
the first draft of this paper. During the revision process, an
asymptotic theoretical connection between an estimator based on tests
and the m.l.e. was established, for the first time, in Theorem~4
of Baraud et al. \cite{RhoEstimation}.
The techniques developed in this paper helped us to prove theoretically
that our estimator was asymptotically very close to the m.l.e. and that
it inherited in particular its nice asymptotic properties such as
efficiency (at least under suitable regularity assumptions on the model
$\F$). These regularity assumptions are however different from theirs.
They may therefore hold true in some parametric models where
those of Baraud et al. \cite{RhoEstimation} do not.

\subsection{Organization of the paper and notations}
For the sake of clarity, we start by considering models parametrized by
a one-dimensional parameter. In Section~\ref{SectionEstimationDim1}, we
present our procedure and the associated theoretical results. We
evaluate its performance in practice by carrying out numerical
simulations in the next section. We study the multi-dimensional case in
Section~\ref{SectionEstimationDimQuel}. 
We postpone the main proofs to Section~\ref{SectionProofs} except the
one of Theorem~\ref{54515215415152151514} which is quite
technical and deferred to the \hyperref[SectionAnnexe1]{Appendix}.

We now introduce some notation that will be used all along the paper.
The number $x \vee y$ stands for $\max(x,y)$ and $x_+$ stands for $x
\vee0$. We set $\N^{\star} = \N\setminus\{0\}$. The vector
$(\theta
_1,\ldots,\theta_d)$ of $\R^d$ is denoted by the bold letter $\ttheta$.
We write indifferently $h(f_{\ttheta}, f_{\ttheta'})$ or $h (\ttheta,
\ttheta')$. The cardinality of a finite set $A$ is denoted by $|A|$.
For $(E,d)$ a metric space, $x \in E$ and $A \subset E$, the distance
between $x$ and $A$ is denoted by $d(x,A)= \inf_{a \in A} d(x,a)$. The
indicator function of a subset $A$ is denoted by $\1_A$.
The notation $C$, $C'$, $C''$ stand for quantities independent of $n$.
When they depend on other parameters, this dependency will be specified
in the text. The values of $C$, $C'$, $C'', \ldots$ may change from
line to line.

\section{Models parametrized by a one-dimensional parameter} \label
{SectionEstimationDim1}

\subsection{Assumption on the model}
We start by considering sets of densities $\F=  \{f_{\theta},
\theta\in\Theta \}$ indexed by a finite interval $\Theta=
[m,M]$ of $\R$. Such a set will be called a one-dimensional model.
Throughout this section, the models are assumed to satisfy the
following property.

\begin{hyp} \label{HypSurLeModeleQuelquonqueDebutDim1}
There exist positive numbers $\alpha$, $\underline{R}$, $\overline{R}$
such that for all $\theta, \theta' \in[m,M]$,
\[
\underline{R} \bigl|\theta- \theta'\bigr|^{\alpha} \leq
h^2 \bigl({\theta}, {\theta'} \bigr) \leq\overline{R} \bigl|
\theta- \theta'\bigr|^{\alpha},
\]
where $h (\theta, \theta'  )$ stands for the Hellinger
distance $h  (f_{\theta}, f_{\theta'}  )$ between the two
densities $f_{\theta}$ and $f_{\theta'}$.
\end{hyp}

This assumption allows to connect a (quasi) distance between the
parameters to the Hellinger one between the corresponding densities.
A similar assumption may be found in Theorem~5.8 of Chapter~1 of
Ibragimov and Has'minskii \cite{Ibragimov1981} to prove results on the
maximum likelihood estimator. They require, however, the application
$\theta\mapsto f_{\theta} (x)$ to be continuous for $\mu$-almost
all $x$ to ensure the existence and the consistency of the m.l.e.
Without this additional assumption, the m.l.e. may not exist as shown
by the translation model $\F=  \{f (\cdot- \theta),   \theta
\in
[-1, 1] \} $ where $f$ is defined in the \hyperref[sec1]{Introduction} by (\ref
{eqModelIrregulier}) (note that Assumption~\ref
{HypSurLeModeleQuelquonqueDebutDim1} holds for this model with $\alpha
= 1/2$).

Under suitable regularity conditions on the model, Theorem~7.6 of
Chapter~1 of Ibragimov and Has'minskii \cite{Ibragimov1981} shows that
this assumption is fulfilled with $\alpha= 2$. Other kinds of
sufficient conditions implying Assumption~\ref
{HypSurLeModeleQuelquonqueDebutDim1} may be found in this book (see the
beginning of Chapter~5 and Theorem~1.1 of Chapter~6). Other examples
and counterexamples are given in Chapter~7
of Dacunha-Castelle \cite{DacunhaCastelle}.
Several models of interest satisfying this assumption will appear later
in the paper.

\subsection{Basic ideas} \label{SectionHeuristique} We now present the
heuristic on which our estimation procedure is based.
We assume in this section that $s$ belongs to the model $\F$, that is,
there exists $\theta_0 \in\Theta= [m, M]$ such that $s = f_{\theta_0}$.
The starting point is the existence for all $\theta, \theta' \in
\Theta
$ of a measurable function $T(\theta, \theta')$ of the observations
$X_1,\ldots,X_n$ such that:
\begin{longlist} [1.]
\item[1.] For all $\theta, \theta' \in\Theta$, $T(\theta, \theta') =
- T
(\theta', \theta) $.
\item[2.]
There exists $\kappa> 0$ such that if
$\E [T (\theta,\theta')  ]$ is non-negative, then
$ h^2 (\theta_0, {\theta}) > {\kappa} h^2 ({\theta},{\theta'})$.
\item[3.]
For all $\theta,\theta' \in\Theta$,
$T(\theta, \theta') $ and $\E [T (\theta, \theta')  ]$ are
close (in a suitable sense).
\end{longlist}
For all $\theta\in\Theta$, $r > 0$, let $\B(\theta, r)$ be the
Hellinger ball centered at $\theta$ with radius $r$, that is,
%
\begin{equation}
\label{eqDefinitionBouleHel} \B(\theta, r) = \bigl\{\theta' \in\Theta, h \bigl({
\theta}, {\theta'}\bigr) \leq r \bigr\}.
\end{equation}
For all $\theta, \theta' \in\Theta$, we deduce from the first point
that either $T ({\theta}, {\theta'})$ is non-negative, or $T ({\theta
'}, {\theta})$ is non-negative. It is likely that it follows from 2 and 3
that in the first case
\[
\theta_0 \in\Theta\setminus\B \bigl(\theta, \kappa^{1/2}
h \bigl({\theta}, {\theta'}\bigr) \bigr)
\]
while in the second case
\[
\theta_0 \in\Theta\setminus\B \bigl(\theta',
\kappa^{1/2} h \bigl({\theta }, {\theta'}\bigr) \bigr).
\]
These sets may be interpreted as confidence sets for $\theta_0$.

The main idea is to build a decreasing sequence (in the sense of
inclusion) of intervals $(\Theta_i)_i$. Set $\theta^{(1)} = m$,
$\theta
'^{(1)} = M$, and $\Theta_1 = [\theta^{(1)}, \theta'^{(1)}]$ (which is
merely $\Theta$). If $T ({\theta^{(1)}}, {\theta'^{(1)}} )$ is
non-negative, we consider a set $\Theta_2$ such that
\[
\Theta_1 \setminus\B \bigl(\theta^{(1)},
\kappa^{1/2} h \bigl({\theta ^{(1)}}, {\theta'^{(1)}}
\bigr) \bigr) \subset\Theta_2 \subset\Theta_1
\]
while if $T ({\theta^{(1)}}, {\theta'^{(1)}} )$ is non-positive, we
consider a set $\Theta_2$ such that
\[
\Theta_1 \setminus\B \bigl(\theta'^{(1)},
\kappa^{1/2} h \bigl({\theta ^{(1)}}, {\theta'^{(1)}}
\bigr) \bigr) \subset\Theta_2 \subset\Theta_1.
\]
The set $\Theta_2$ may thus also be interpreted as a confidence set for
$\theta_0$. Thanks to Assumption~\ref
{HypSurLeModeleQuelquonqueDebutDim1}, we can define $\Theta_2$ as an
interval $\Theta_2 = [\theta^{(2)}, \theta'^{(2)}]$.

We then repeat the construction to build an interval $\Theta_3 =
[\theta
^{(3)}, \theta'^{(3)}]$ included in $\Theta_2$ such that either
\[
\Theta_3 \supset\Theta_2 \setminus\B \bigl(
\theta^{(2)}, \kappa^{1/2} h \bigl({\theta^{(2)}}, {
\theta'^{(2)}}\bigr) \bigr)\quad \mbox{or}\quad \Theta _3
\supset\Theta_2 \setminus\B \bigl(\theta'^{(2)},
\kappa^{1/2} h \bigl({\theta^{(2)}}, {\theta'^{(2)}}
\bigr) \bigr)
\]
according to the sign of $T ({\theta^{(2)}}, {\theta'^{(2)}} )$.

By induction, we build a decreasing sequence of such intervals $(\Theta
_i)_i$. We now consider an integer $N$ large enough so that the length
of $\Theta_N$ is small enough. We then define the estimator $\hat
{\theta
}$ as the center of the set $\Theta_N$ and estimate $s$ by $f_{{\hat
{\theta}}}$.

\subsection{Definition of the test} \label{SectionDefTest}
The test $T(\theta,\theta')$ we use in our estimation strategy is the
one of Baraud \cite{BaraudMesure} applied to two suitable densities of
the model. More precisely, let $\overline{T}$ be the functional defined
for all $g,g' \in\L^1_+ (\XX,\mu)$ by
%
\begin{eqnarray}
\label{eqFonctionnalBaraud}
\hspace*{-10pt} \overline{T}\bigl(g, g'\bigr) = \frac{1}{n}
\sum_{i=1}^n \frac{\sqrt{g'
(X_i)} - \sqrt{g (X_i)}}{\sqrt{g (X_i) + g'(X_i)}} +
\frac{1}{2 } \int_{\XX} \sqrt{g(x) + g'(x)}
\bigl(\sqrt{g' (x)} - \sqrt{g (x)} \bigr) \,\d\mu(x),
\end{eqnarray}
where the convention $0 / 0 = 0$ is in use.

We consider $t \in(0,1]$ and $\varepsilon= t (\overline{R}
n)^{-1/\alpha}$. We then define the finite sets
\begin{eqnarray*}
\Theta_{\mathrm{dis}} = \bigl\{m+ k \varepsilon, k \in\N, k \leq (M-m)
\varepsilon^{-1} \bigr\},\qquad \Fd= \{f_{\theta}, \theta \in
\Theta_{\mathrm{dis}} \}
\end{eqnarray*}
and the map $\pi$ on $[m, M]$ by
\[
\pi({x}) = m + \bigl\lfloor(x - m) / \varepsilon\bigr\rfloor\varepsilon
\qquad\mbox{for all $x \in[m, M]$},
\]
where $\lfloor\cdot\rfloor$ denotes the integer part. The test
$T(\theta,\theta')$ is finally defined by
\[
{T} \bigl({\theta},{\theta'}\bigr) = \overline{T}(f_{\pi(\theta)},f_{\pi
(\theta
')})\qquad
\mbox{for all $\theta,\theta' \in[m, M]$}.
\]
The aim of the parameter $t$ is to tune the thinness of the net $\Fd$.
The smaller $t$, the thinner $\Fd$.

\subsection{Estimation procedure} \label
{SectionEstimationProcedureDim1} We shall build a decreasing sequence
$(\Theta_i)_{i \geq1}$ of intervals of $\Theta= [m,M]$ as explained
in Section~\ref{SectionHeuristique}.
Let $\kappa> 0$, and for all $\theta, \theta' \in[m,M]$ such that
$\theta' > \theta$, let $\overline{r} (\theta, \theta')$,
$\underline
{r} (\theta, \theta')$ be two positive numbers satisfying
%
\begin{eqnarray}
[m,M] \cap \bigl[ \theta, \theta+\overline{r} \bigl(\theta,\theta'
\bigr) \bigr] &\subset& \B \bigl(\theta, \kappa^{1/2} h \bigl({\theta}, {
\theta'}\bigr) \bigr) ,\label{EquationSurRi1}
\\
{} [m,M] \cap \bigl[ \theta'-\underline{r} \bigl(\theta,
\theta'\bigr), \theta ' \bigr] &\subset& \B \bigl(
\theta', \kappa^{1/2} h \bigl({\theta}, {\theta
'}\bigr) \bigr), \label{EquationSurRi2}
\end{eqnarray}
where we recall that $ \B(\theta, {\kappa}^{1/2} h ({\theta},
{\theta
'}) )$ and $ \B(\theta', {\kappa}^{1/2} h ({\theta}, {\theta'}) )$ are
the Hellinger balls defined by~{(\ref{eqDefinitionBouleHel})}.

We set $\theta^{(1)} = m$, $\theta'^{(1)} = M$ and $\Theta_1 =
[\theta
^{(1)},\theta'^{(1)}]$. We define the sequence $(\Theta_i)_{i \geq1}$
by induction. When $\Theta_i = [\theta^{(i)}, \theta'^{(i)}]$, we set
\begin{eqnarray*}
\theta^{(i+1)} &=& \cases{\displaystyle \theta^{(i)} + \min \biggl\{
\overline{r} \bigl(\theta^{(i)},\theta'^{(i)}
\bigr), \frac{\theta'^{(i)} - \theta^{(i)}}{2} \biggr\}, & \quad
$\mbox{if $ T \bigl({\theta
^{(i)}},{\theta'^{(i)}}\bigr) \geq0$},$
\vspace*{2pt}
\cr
\theta^{(i)}, &\quad $\mbox{otherwise}$}
\\
\theta'^{(i+1)} &=& \cases{\displaystyle \theta'^{(i)}
- \min \biggl\{\underline{r} \bigl(\theta^{(i)},\theta
'^{(i)}\bigr), \frac{\theta'^{(i)} - \theta^{(i)}}{2} \biggr\}, &\quad
 $\mbox {if $ T
\bigl({\theta^{(i)}},{\theta'^{(i)}}\bigr)
\leq0$},$ \vspace*{2pt}
\cr
\theta'^{(i)}, &\quad $\mbox{otherwise.}$}
\end{eqnarray*}
We then define $\Theta_{i+1} = [\theta^{(i+1)}, \theta'^{(i+1)}]$.

The role of conditions (\ref{EquationSurRi1}) and (\ref
{EquationSurRi2}) is to ensure that $\Theta_{i+1}$ is big enough to
contain one of the two confidence sets
\[
\Theta_i \setminus\B \bigl(\theta^{(i)}, {
\kappa}^{1/2} h \bigl({\theta ^{(i)}}, {\theta'^{(i)}}
\bigr) \bigr) \quad\mbox{and}\quad \Theta_i \setminus\B \bigl(
\theta'^{(i)}, {\kappa}^{1/2} h \bigl({
\theta^{(i)}}, {\theta'^{(i)}}\bigr) \bigr).
\]
The parameter $\kappa$ allows to tune the level of these confidence
sets. There is a minimum in the definitions of $\theta^{(i+1)}$ and
$\theta'^{(i+1)}$ in order to guarantee the inclusion of $\Theta_{i+1}$
in $\Theta_i$.

We now consider a positive number $\eta$ and build these intervals
until their lengths become smaller than $\eta$.
The estimator is then defined as the center of the last interval. This
parameter $\eta$ stands for a measure of the accuracy of the estimation
and must be small enough to get a suitable risk bound for the estimator.
The algorithm is therefore the following.
\begin{algorithm}[H]
\caption{}
\label{AlgorithmDim1}
\begin{algorithmic} [1]
\STATE$\theta\leftarrow m$, $\theta' \leftarrow M$
\WHILE{$\theta' - \theta> \eta$}
\STATE Compute $r = \min \{\overline{r} (\theta,\theta'),
(\theta
' - \theta)/2 \}$
\STATE Compute $r' = \min \{\underline{r} (\theta,\theta'),
(\theta
' - \theta)/2  \}$
\STATE Compute $\mathrm{Test} = T(\theta,\theta')$
\IF{$\mathrm{Test} \geq0$}
\STATE$\theta\leftarrow\theta+ r$
\ENDIF
\IF{$\mathrm{Test} \leq0$}
\STATE$\theta' \leftarrow\theta' - r'$
\ENDIF
\ENDWHILE
\RETURN$\hat{\theta} = (\theta+ \theta')/2$
\end{algorithmic}
\end{algorithm}

The convergence of the algorithm is guaranteed under very mild
conditions on $\overline{r} (\theta, \theta') $ and $\underline{r}
(\theta, \theta') $. For instance, a sufficient condition is that the
functions $\overline{r} (\cdot,\cdot) $, $\underline{r} (\cdot
,\cdot) $
are positive and continuous on the set $\{(\theta, \theta'),   m
\leq
\theta< \theta' \leq M \}$.
Moreover, its numerical complexity can be bounded as soon as $\overline
{r} (\theta, \theta') $ and $\underline{r} (\theta, \theta') $ are
large enough as we shall see in Section~\ref{SectionDefinitionRminBarreDim1}.

\subsection{A non-asymptotic risk bound} \label{SectionPropEstimateurDim1} The following theorem specifies the values
of the parameters $t$, $\kappa$, $\eta$ that allow to control the risk
of the estimator $\hat{s} = f_{\hat{\theta}}$.

\begin{thmm} \label{ThmPrincipalDim1}
Suppose that Assumption~\ref{HypSurLeModeleQuelquonqueDebutDim1}
holds. Set
%
\begin{equation}
\label{eqEsperanceTest} \bar{\kappa} = 3/2 - \sqrt{2}.
\end{equation}
Assume that $t \in(0,1]$, $\kappa\in(0,\bar{\kappa})$, $\eta\in(0,
(\overline{R} n)^{-1/\alpha} ]$
and that $\overline{r} (\theta,\theta')$, $\underline{r} (\theta
,\theta
')$ are such that (\ref{EquationSurRi1}) and (\ref{EquationSurRi2})
hold and that the algorithm converges.

Then, for all $\xi> 0$, the estimator $\hat{\theta}$ derived from
Algorithm \ref{AlgorithmDim1}
satisfies
\[
\P \biggl[ C h^2(s,f_{\hat{\theta}}) \geq h^2(s, \F) +
\frac{D_{\F}}{n} + \xi \biggr] \leq \mathrm{e}^{- n \xi},
\]
where $D_{\F} = 1 \vee\log (1 + t^{-1} ( (1/ {{\alpha}}) (c
\overline{R} / \underline{R}) )^{1/\alpha} )$
with $c$ depending only on $\kappa$, and where $C > 0$ depends only on
$\kappa$ and $\overline{R}/\underline{R}$. Besides, if
\[
h^2\bigl({\theta_2}, {\theta_2'}
\bigr) \leq h^2\bigl({\theta_1}, {\theta_1'}
\bigr)\qquad \mbox{for all $m \leq\theta_1 \leq\theta_2 <
\theta_2' \leq \theta_1' \leq
M$}
\]
then $C$ depends only on $\kappa$.
\end{thmm}

We deduce from this risk bound that if $s = f_{\theta_0}$ belongs to
the model $\F$, the estimator ${\hat{\theta}}$ converges almost surely
to $\theta_0$. Besides, we may then derive from Assumption~\ref
{HypSurLeModeleQuelquonqueDebutDim1} that there exist positive numbers
$a$, $b$ such that
\[
\P \bigl[n^{1/\alpha} |\hat{\theta} - \theta_{0} | \geq \xi
\bigr] \leq a \mathrm{e}^{-b \xi^{\alpha}} \qquad\mbox{for all $\xi> 0$.}
\]
We emphasize here that this exponential inequality on $\hat{\theta}$ is
non-asymptotic but that the numbers $a$ and $b$ are, unfortunately, far
from optimal (since their values depend on several parameters involved
in the algorithm such as $t$ or $\kappa$).
As explained in the \hyperref[sec1]{Introduction}, this theorem also shows that the
estimator $\hat{s}$ possesses robustness properties with respect to the
Hellinger distance.

\subsection{Connection with the maximum likelihood estimator}
When the model is true and regular enough, the above theorem states
that $\sqrt{n} (\hat{\theta} - \theta_0)$ is sub-Gaussian (since in
this case Assumption~\ref{HypSurLeModeleQuelquonqueDebutDim1} holds
with $\alpha= 2$). Actually, in favourable situations, $\hat{\theta}$
shares the nice asymptotic properties of the m.l.e., and in particular
its efficiency.

\begin{thmm} \label{thmLienMLEDim1}
Suppose that the model $\F$ satisfies the following conditions:
\begin{longlist}[(viii)]
\item[(i)] There exists $\theta_0 \in(m,M)$ such that $s = f_{\theta_0}
\in\F$.
\item[(ii)] The model is identifiable, that is, for all $\theta\neq\theta
'$, $f_{\theta} \neq f_{\theta'}$.
\item[(iii)] For $\mu$-almost all $x \in\XX$, the mapping $\theta\mapsto
f_{\theta} (x) $ is continuous and positive on $[m,M]$ and two times
differentiable on $(m, M)$. Its first and second derivatives are
denoted, respectively, by $\dot{f}_{\theta} (x)$ and $\ddot{f}_{\theta
}(x)$. For $\mu$-almost all $x \in\XX$, the function $\theta\mapsto
\dot{f}_{\theta} (x)$ can be extended by continuity to $[m,M]$.
%
\item[(iv)] For all $\theta\in[m,M]$, the Fisher information
\[
I(\theta) = \int_{\XX} \bigl( \dot{l}_{\theta} (x)
\bigr)^2 f_{\theta} (x) \,\d\mu(x) \qquad\mbox{with }
\dot{l}_{\theta}(x) = \frac
{\partial\log f_{\theta} (x)}{\partial\theta}
\]
is non-zero and satisfies $\sup_{\theta\in[m,M]} I (\theta) <
\infty
$. Moreover, $\theta\mapsto I(\theta)$ is continuous at $\theta_0$.
\item[(v)] The integrals $\int_{\XX} \dot{f}_{\theta_0} (x) \,\d\mu(x)$,
$\int_{\XX} \ddot{f}_{\theta_0} (x) \,\d\mu(x)$ exist and are zero.
\item[(vi)] There exist two positive functions $\varphi_1$, $\varphi_2$ and
two numbers $\gamma_1 > 2/3$, $\gamma_2 > 0$ such that for all
$\theta,
\theta' \in(m,M)$ and $\mu$-almost all $x \in\XX$,
\begin{eqnarray*}
\bigl\llvert \log f_{\theta'} (x) - \log f_{\theta} (x) \bigr
\rrvert &\leq& \varphi_1(x) \bigl\llvert \theta'- \theta
\bigr\rrvert ^{\gamma_1},
\\
\bigl\llvert \ddot{l}_{\theta'} (x) - \ddot{l}_{\theta} (x) \bigr
\rrvert &\leq& \varphi_2(x) \bigl\llvert \theta' -
\theta\bigr\rrvert ^{\gamma_2},
\end{eqnarray*}
where $\ddot{l}_{\theta} (x)$ stands for the second derivative of
$\theta\mapsto\log f_{\theta}(x)$. Moreover, $\E [\varphi_1^3
(X_1)  ] $ and $\E [\varphi_2 (X_1)  ] $ are finite.

Furthermore, assume the following conditions on the algorithm:
%
%
\item[(vii)] The parameter $t$ depends on $n$ (one then writes $t^{(n)}$ in
place of $t$) and $t^{(n)}$ tends to $0$ in such a way that $|\log
t^{(n)}| = \mathrm{o} (n)$ when $n$ goes to infinity. The positive
parameter $\eta$ depends on $n$ and is smaller than $t^{(n)}
(\overline
{R} n)^{-1/2}$.
\item[(viii)] The parameter $\kappa\in(0,\bar{\kappa})$ is chosen
independently of $n$, the parameters $\overline{r} (\theta,\theta')$,
$\underline{r} (\theta,\theta')$ are chosen in such a way that (\ref
{EquationSurRi1}) and (\ref{EquationSurRi2}) hold and that the
algorithm converges.
\end{longlist}
Then Assumption~\ref{HypSurLeModeleQuelquonqueDebutDim1} holds with $\alpha= 2$ and there
exist $C > 0$ (that may depend on $\kappa$ and $\underline{R}$ but not
on $n$) and a sequence $(\zeta_n)_{n \geq1}$ in $[0,1]$ converging to
$0$ such that
\[
\P \Biggl[\exists\tilde{\theta} \in(m,M), \sum_{i=1}^n
\dot {l}_{\tilde{\theta}} (X_i) = 0 \mbox{ and } |\hat{\theta } -
\tilde{\theta} | \leq C \frac{t^{(n)}}{\sqrt{n}} \Biggr] \geq 1 - \zeta_n.
\]
In particular, $\hat{\theta}$ is asymptotically efficient, that is,
$\sqrt{n} (\hat{\theta} - \theta_0)$ converges in distribution to a
normal distribution with mean zero and variance $1/I(\theta_0)$.
Moreover, if there exists $\lambda> 0$ such that $\E[\exp(\lambda
\varphi_2 (X_1)) ]$, $\E[ \exp(\lambda|\dot{l}_{\theta_0} (X_1)|) ]$
and $\E[\exp(\lambda|\ddot{l}_{\theta_0}(X_1)|) ]$ are finite, then
there exists $b > 0$ such that the sequence $( \zeta_n \exp(b
n))_{n\geq1}$ is bounded above.
\end{thmm}

The main interest of $\hat{\theta}$ as compared to the m.l.e. when the
model is regular enough lies in the fact that one usually does not know
whether $s$ belongs to the model or not. If the model is true, $\hat
{\theta}$ inherits the nice asymptotic statistical properties of the
m.l.e. However, it possesses robustness properties with respect to the
Hellinger distance, which is definitively not the case for the m.l.e.

\begin{remark*}
When the model is regular enough but does not
contain the unknown density $s$, the theoretical properties of the
estimator $\hat{\theta}$ are only guaranteed by Theorem~\ref
{ThmPrincipalDim1}. When $t = t^{(n)}$ depends on $n$ and satisfies the
assumptions of Theorem~\ref{thmLienMLEDim1}, the term $D_{\F}/n$
appearing in Theorem~\ref{ThmPrincipalDim1} converges to $0$, but at a
rate slower than $1/n$. 
It is, for instance, of the order of $\log n/n$ when $t^{(n)} = a/n^k$
with $a > 0$, $k > 0$. This deteriorates the risk bound and this could
get worse since we may make this rate of convergence arbitrarily slow
by playing with $t^{(n)}$. We conjecture that this phenomenon is due to
technical difficulties and that the estimator remains good even when $t
= t^{(n)}$ is arbitrarily small or even zero (that is, with $\Fd= \F$)
as suggested by the numerical simulations (in Section~\ref{SectionSimuDim1}).
\end{remark*}

\subsection{Numerical complexity} \label{SectionDefinitionRminBarreDim1}
The numerical complexity of the estimation procedure depends on several
parameters ($\eta$, $\kappa$, $\overline{r} (\theta,\theta')$,
$\underline{r} (\theta,\theta')$) that must be chosen by the
statistician (since they are involved in the algorithm).

The role of the parameter $\eta$ is to stop the algorithm when the
confidence sets are small enough. Consequently, the smaller $\eta$, the
longer it takes to compute the estimator. Nevertheless, we shall see at
the end of this section that the time of construction of the estimator
grows slowly when $\eta$ decreases.

The parameter $\kappa$ tunes the level of the confidence sets, and thus
also the speed of the procedure: the larger $\kappa$, the faster the
procedure. Note, however, that the preceding theorems require that
$\kappa$ be smaller than $\bar{\kappa}$. There is no theoretical
guarantee when $\kappa$ is larger than $\bar{\kappa}$.

The values of the parameters $\overline{r} (\theta,\theta')$,
$\underline{r} (\theta,\theta')$ do not change the theoretical
statistical properties of the estimator given by Theorems \ref
{ThmPrincipalDim1} and \ref{thmLienMLEDim1} (provided that (\ref
{EquationSurRi1}) and (\ref{EquationSurRi2}) hold) but strongly
influence its construction time. The larger they are, the faster the
procedure is.
There are three different situations:

\textit{First case}: The Hellinger distance $ h ({\theta},
{\theta'}) $ can be made explicit. We have thus an interest in defining
them as the largest numbers for which (\ref{EquationSurRi1}) and (\ref
{EquationSurRi2}) hold, that is,
%
\begin{eqnarray}
\overline{r} \bigl(\theta,\theta'\bigr) &=& \sup \bigl\{r > 0,
[m,M] \cap [ \theta, \theta+r ] \subset\B \bigl(\theta, \kappa ^{1/2} h
\bigl({\theta}, {\theta'}\bigr) \bigr) \bigr\}, \label
{EqDefinitionRDim1Optimal1}
\\
\underline{r} \bigl(\theta,\theta'\bigr) &=& \sup \bigl\{r > 0,
[m,M] \cap \bigl[ \theta'-r, \theta' \bigr] \subset\B
\bigl(\theta', \kappa^{1/2} h \bigl({\theta}, {
\theta'}\bigr) \bigr) \bigr\} \label{EqDefinitionRDim1Optimal2}.
\end{eqnarray}

\textit{Second case}: The Hellinger distance $ h ({\theta},
{\theta'}) $ can be quickly evaluated numerically but the computation
of (\ref{EqDefinitionRDim1Optimal1}) and (\ref
{EqDefinitionRDim1Optimal2}) is difficult. We may then define them by
%
\begin{equation}
\label{EqDefintionRDim2} \underline{r} \bigl(\theta,\theta'\bigr) =
\overline{r} \bigl(\theta,\theta'\bigr) = \bigl( (\kappa/ {
\overline{R}}) h^2 \bigl({\theta}, {\theta'}\bigr)
\bigr)^{1/\alpha}.
\end{equation}
One can verify that (\ref{EquationSurRi1}) and (\ref{EquationSurRi2})
hold. When the model is regular enough and $\alpha= 2$, the value
of $\overline{R}$ can be calculated by using Fisher information [see,
for instance, Theorem~7.6 of Chapter~1 of
Ibragimov and Has'minskii \cite{Ibragimov1981}].

\textit{Third case}: The computation of the Hellinger
distance $ h ({\theta}, {\theta'}) $ involves the numerical computation
of an integral and this computation is slow. An alternative definition
is then
%
\begin{equation}
\label{EqDefintionRDim1} \underline{r} \bigl(\theta,\theta'\bigr) =
\overline{r} \bigl(\theta,\theta'\bigr) = (\kappa\underline{R}/
\overline{R})^{1/\alpha} \bigl(\theta' - \theta \bigr).
\end{equation}
As in the second case, one can check that (\ref{EquationSurRi1}) and
(\ref{EquationSurRi2}) hold.
Note, however, that the computation of the test also involves in most
cases the numerical computation of an integral (see (\ref
{eqFonctionnalBaraud})). This third case is thus mainly devoted to
models for which this numerical integration can be avoided, as for the
translation models $\F=  \{f (\cdot- \theta),   \theta\in
[m,M] \}$ with $f$ even, $\XX= \R$ and $\mu$ the Lebesgue measure
(the second term of (\ref{eqFonctionnalBaraud}) is $0$ for these
models).

We can upper bound the numerical complexity of the algorithm when
$\overline{r} (\theta,\theta')$ and $\underline{r} (\theta,\theta')$
are large enough. More precisely, we have the following.

\begin{prop} \label{PropCalculComplexiteDimen1}
Suppose that the assumptions of Theorem~\ref{ThmPrincipalDim1} hold and
that $\underline{r} (\theta,\theta')$, $\overline{r} (\theta
,\theta')$
are larger than
%
\begin{equation}
\label{eqSurretR} (\kappa\underline{R}/ \overline{R})^{1/\alpha} \bigl(
\theta' - \theta \bigr).
\end{equation}
Then the algorithm converges in less than
\[
1 + \max \bigl\{ \bigl( {\overline{R}}/{ (\kappa\underline{R})}
\bigr)^{1/\alpha}, {1}/{\log2} \bigr\} \log \biggl( \frac{M-m}{\eta} \biggr)
\]
iterations.\vadjust{\goodbreak}
\end{prop}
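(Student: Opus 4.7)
The plan is to show that the interval length $L_i = \theta'^{(i)} - \theta^{(i)}$ shrinks by a fixed geometric factor at every iteration, and then count how many iterations are needed before $L_i \leq \eta$.

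First I would set $\rho = (\kappa \underline{R}/\widebar{R})^{1/\alpha}$, so the hypothesis reads $\bar r(\theta,\theta'),\, \underline r(\theta,\theta') \geq \rho\,(\theta'-\theta)$. At iteration $i$, the algorithm replaces $\theta^{(i)}$ by $\theta^{(i)}+\min\{\bar r,L_i/2\}$ when $T(\theta^{(i)},\theta'^{(i)})\geq 0$, and/or $\theta'^{(i)}$ by $\theta'^{(i)}-\min\{\underline r,L_i/2\}$ when the test is $\leq 0$. In every one of the three sign cases, at least one endpoint moves inwards by at least $\min\{\rho L_i, L_i/2\}$. Hence
\begin{equation*}
L_{i+1} \leq L_i - \min\{\rho,\tfrac12\}\,L_i = \max\{1-\rho,\tfrac12\}\,L_i =: q\, L_i.
\end{equation*}

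Iterating this one-step contraction gives $L_{N+1} \leq q^{N}(M-m)$ after $N$ tests. The loop terminates as soon as $L_{N+1}\leq \eta$, so the number of tests performed is at most the smallest integer $N$ with $q^{N}(M-m)\leq \eta$, that is
\begin{equation*}
N \leq 1 + \frac{\log\!\big((M-m)/\eta\big)}{\log(1/q)}.
\end{equation*}

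It then remains to bound $1/\log(1/q)$ by $\max\{1/\rho,\,1/\log 2\}$. If $\rho\geq 1/2$ then $q=1/2$ and $1/\log(1/q)=1/\log 2$. Otherwise $q=1-\rho$, and the elementary inequality $-\log(1-\rho)\geq \rho$ on $(0,1)$ yields $1/\log(1/q)\leq 1/\rho$. Substituting $1/\rho=(\widebar R/(\kappa\underline R))^{1/\alpha}$ gives the stated bound. There is no real obstacle here — the only subtle point is to check that the worst case between the three test sign branches still yields the same contraction factor, which follows from the $L_i/2$ cutoff forcing both $r$ and $r'$ to be at least $\min\{\rho,1/2\}\,L_i$.
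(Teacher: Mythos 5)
Your proof is correct and follows essentially the same route as the paper: establish the one-step contraction $L_{i+1}\leq \max\{1-\rho,1/2\}\,L_i$ from the lower bound on $\bar r,\underline r$ and the $L_i/2$ cutoff, iterate, and convert via $-1/\log(1-x)\leq 1/x$. No gaps.
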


This is an improvement with respect to the procedure of Baraud \cite
{BaraudMesure} where the number of tests computed is roughly of the
order of $|\Fd|^2$, which is much larger than the above bound when
$\varepsilon$ is small enough (and $\eta= \varepsilon$).

\section{Simulations for one-dimensional models} \label
{SectionSimuDim1} In what follows, we carry out a simulation study in
order to investigate more precisely the performance of our estimator.
We simulate samples $(X_1,\ldots,X_n)$ with density $s$ and use our
procedure to estimate $s$.

\subsection{Models} Our simulation study is based on the following models.

\begin{ExempleSimuDimmm}\label{ex1}
$\F=  \{f_{\theta},   \theta\in[0.01, 100] \} $ where
$f_{\theta} (x) = \theta \mathrm{e}^{- \theta x} \1_{[0,+\infty)} (x)$ for all
$x \in\R$.
\end{ExempleSimuDimmm}

\begin{ExempleSimuDimmm}\label{ex2}
$\F=  \{f (\cdot-\theta),   \theta\in[-100, 100] \} $
where $f$ is the density of a standard Gaussian distribution.
\end{ExempleSimuDimmm}

\begin{ExempleSimuDimmm}\label{ex3}
$\F=  \{f (\cdot-\theta),   \theta\in[-10, 10] \} $ where
$f$ is the density of a standard Cauchy distribution.
\end{ExempleSimuDimmm}

\begin{ExempleSimuDimmm}\label{ex4}
$\F=  \{f_{\theta},   \theta\in[0.01, 10] \} $ where
$f_{\theta} = \theta^{-1} \1_{[0,\theta]}$.
\end{ExempleSimuDimmm}

\begin{ExempleSimuDimmm}\label{ex5}
$\F=  \{f_{\theta},   \theta\in[-10, 10] \} $ where
$f_{\theta} (x) = \frac{1}{(x-\theta+1)^2 }\1_{[\theta,+\infty)} (x)$
for all $x \in\R$.
\end{ExempleSimuDimmm}

\begin{ExempleSimuDimmm}\label{ex6}
$\F=  \{\1_{[\theta- 1/2, \theta+ 1/2]},   \theta\in[-10,
10] \}$.
\end{ExempleSimuDimmm}

\begin{ExempleSimuDimmm}\label{ex7}
$\F=  \{f (\cdot- \theta),   \theta\in[-1, 1] \} $ where
$f$ is defined by (\ref{eqModelIrregulier}).
\end{ExempleSimuDimmm}

In these examples, we shall mainly compare our estimator with the
maximum likelihood one. In Examples \ref{ex1}, \ref{ex2}, \ref{ex4} and \ref{ex5},
the m.l.e. $\tilde
{\theta}_{\mathrm{mle}}$ can be made explicit and is thus easy to
compute. Finding the m.l.e. is more delicate for the problem of
estimating the location parameter of a Cauchy distribution, since the
likelihood function may be multimodal. We refer
to Barnett \cite{Barnett1966} for a discussion of numerical methods
devoted to the maximization of the likelihood. In this simulation
study, we avoid the issues of the numerical algorithms by computing the
likelihood at $10^6$ equally spaced points between $\max(-10, \hat
{\theta}-1)$ and $\min(10, \hat{\theta}+1)$ (where $\hat{\theta}$ is
our estimator) and at $10^{6}$ equally spaced points between $\max(-10,
\tilde{\theta}_{\mathrm{median}}-1)$ and $\min(10, \tilde{\theta
}_{\mathrm
{median}}+1)$ where $\tilde{\theta}_{\mathrm{median}}$ is the median. We
then select among these points the one for which the likelihood is
maximal. In Example~\ref{ex4}, we shall also compare our estimator to the
estimator of the family $\{a \max_{1 \leq i \leq n} X_i, a > 0\}$ that
minimizes the Hellinger quadratic risk, that is,
\[
\tilde{\theta}_{\mathrm{best}} = \biggl(\frac{4 n}{2 n + 1} \biggr)^{2/(2n-1)}
\max_{1 \leq i \leq n} X_i.
\]
In Example~\ref{ex6}, we shall compare our estimator to
\[
\tilde{\theta}' = \frac{1}{2} \Bigl(\max
_{1 \leq i \leq n} X_i + \min_{1 \leq i \leq n}
X_i \Bigr).
\]
In the case of Example~\ref{ex7}, the likelihood is infinite at each
observation and the maximum likelihood method fails. We shall then
compare our estimator to the median and the empirical mean but also to
the maximum spacing product estimator $\tilde{\theta}_{\mathrm{mspe}}$
(m.s.p.e. for short).
This estimator was introduced by Cheng and Amin \cite{Cheng1983} and
Ranneby \cite{Ranneby1984} to deal with parametric models for which the
likelihood is unbounded. It is known to possess nice theoretical
properties when $s$ does belong to $\F$. We refer, for instance, to the
two aforementioned papers and
to Ekstr{\"o}m \cite{Ekstrom1998}, Shao and Hahn \cite{Shao1999}, Ghosh
and Jammalamadaka \cite{Ghost2001}, Anatolyev and Kosenok \cite
{Anatolyev2005}. This estimator is, however, not robust. In Example~\ref{ex4},
it is, for instance, defined by $(1+1/n)\max_{1 \leq i \leq n} X_i$
when all the observations $X_i$ are positive, and is therefore highly
sensitive to outliers. In Example~\ref{ex7}, any estimator with values in
$[-1,1]$ is a m.s.p.e. when $\max_{1 \leq i \leq n} X_i - \min_{1
\leq
i \leq n} X_i > 2$. The practical construction of the m.s.p.e. in
Example~\ref{ex7} involves the problem of finding a global maximum of the
maximum product function on $\Theta= [-1,1]$ which may be multimodal.
We compute it by considering $2 \times10^5$ equally spaced points
between $-1$ and $1$ and by calculating, for each of these points, the
function to maximize. We then select the point for which the function
is maximal. Using more points would give more accurate results,
especially when $n$ is large, but we are limited by the capacity of the
computer.

\subsection{Implementation of the procedure}
In this simulation study, we take arbitrarily $\kappa= \bar{\kappa
}/2$. We choose $\eta$ very small but not too much to avoid undesirable
numerical issues. More precisely, $\eta= (M-m) / 10^{8}$ (it is small
enough in view of the values of $\alpha$ and $n$).

The choice of $\underline{r} (\theta,\theta')$ and $\overline{r}
(\theta
,\theta')$ varies according to the examples.
In Examples \ref{ex1}, \ref{ex2}, \ref{ex4} and~\ref{ex6}, we define them by (\ref
{EqDefinitionRDim1Optimal1}) and (\ref{EqDefinitionRDim1Optimal2}). In
Examples \ref{ex3} and \ref{ex5}, we define them by (\ref{EqDefintionRDim2}). In the
first case, $\alpha= 2$ and $\overline{R} = 1/16$, while in the second
case, $\alpha= 1$ and $\overline{R} = 1/2$. In the case of Example~\ref{ex7},
we use (\ref{EqDefintionRDim1}) with $\alpha= 1/2$, $\underline{R} =
0.17$ and $\overline{R} = 1/\sqrt{2}$.

It remains to choose $t$ which tunes the thinness of the net $\Fd$.
When the model is regular enough and contains $s$, a good choice of $t$
seems to be $t = 0$ (that is, $\Theta_{\mathrm{dis}} = \Theta$, $\Fd=
\F$
and $T ({\theta}, {\theta'}) = \overline{T}(f_{\theta}, f_{\theta'})$),
since then the simulations suggest that our estimator is almost equal
to the m.l.e. (with large probability). In all the simulations, we take
$t = 0$ (although this is not theoretically justified).

\subsection{Risks when \texorpdfstring{$s\in\F$}{$s in\mathscr{F}$}} \label{SectionDim1VraiS} We begin to
simulate $N$ samples $(X_1,\ldots,X_n)$ when the true density $s$
belongs to the model $\F$. They are generated according to the
density $s = f_1$ in Examples \ref{ex1}, \ref{ex4} and according to $s = f_0$ in
Examples \ref{ex2}, \ref{ex3}, \ref{ex5}, \ref{ex6}, \ref{ex7}.

We evaluate the quality of an estimator $\tilde{\theta}$ by computing
it on each of the $N$ samples. Let $\tilde{\theta}^{(i)}$ be the value
of this estimator corresponding to the $i$th sample
and let
\[
\widehat{R}_N (\tilde{\theta}) = \frac{1}{N} \sum
_{i=1}^N h^2 (s, f_{\tilde{\theta}^{(i)}}).
\]
The risk $\E [ h^2 (s, f_{\tilde{\theta}})  ]$ of the
estimator $\tilde{\theta}$ is estimated by $\widehat{R}_N(\tilde
{\theta})$.
We also introduce
\[
\widehat{\mathcal{R}}_{N,\mathrm{rel}} (\tilde{\theta}) = \frac
{\widehat
{R}_N (\hat{\theta})}{\widehat{R}_N (\tilde{\theta})}- 1
\]
in order to make the comparison of our estimator $\hat{\theta}$ and the
estimator $\tilde{\theta}$ easier.
When $\widehat{\mathcal{R}}_{N,\mathrm{rel}} (\tilde{\theta})$ is
negative, our estimator is better than $\tilde{\theta}$, whereas if
$\widehat{\mathcal{R}}_{N,\mathrm{rel}} (\tilde{\theta})$ is positive,
our estimator is worse than $\tilde{\theta}$. More precisely, if
$\widehat{\mathcal{R}}_{N,\mathrm{rel}} (\tilde{\theta}) = \alpha$, the
risk of our estimator corresponds to the one of $\tilde{\theta}$
reduced of $100 |\alpha| \%$ when $\alpha< 0$ and increased of $100
\alpha\%$ when $\alpha> 0$.\looseness=-1

\begin{table}
\caption{Risks of the estimators}\label{tab1}
\begin{tabular*}{\textwidth}{@{\extracolsep{4in minus 4in}}lld{2.4}d{2.4}d{2.4}d{2.4}d{2.4}@{}}
\hline
& & \multicolumn{1}{l}{$n = 10$} & \multicolumn{1}{l}{$n = 25$} &
\multicolumn{1}{l}{$n = 50$} & \multicolumn{1}{l}{$n = 75$} &
\multicolumn{1}{l@{}}{$n = 100$}
\\
\hline
Example~\ref{ex1} & $\widehat{R}_{10^6}(\hat{\theta})$ & 0.0130 & 0.0051 &
0.0025 & 0.0017 & 0.0013 \\
& $\widehat{R}_{10^6}(\tilde{\theta}_{\mathrm{mle}})$ & 0.0129 & 0.0051
& 0.0025 & 0.0017 & 0.0013 \\[3pt]
& $\widehat{\mathcal{R}}_{10^6,\mathrm{rel}} (\tilde{\theta}_{\mathrm
{mle}})$ & \multicolumn{1}{c}{$6 \cdot10^{-4}$} & \multicolumn{1}{c}{$10^{-5}$} & \multicolumn{1}{c}{$7 \cdot10^{-7}$} & \multicolumn{1}{c}{$-8
\cdot10^{-9}$} & \multicolumn{1}{c@{}}{$2 \cdot10^{-9}$} \\[6pt]
Example~\ref{ex2} & $\widehat{R}_{10^6}(\hat{\theta})$ & 0.0123 & 0.0050 &
0.0025 & 0.0017 & 0.0012 \\
& $\widehat{R}_{10^6}(\tilde{\theta}_{\mathrm{mle}})$ & 0.0123 & 0.0050
& 0.0025 & 0.0017 & 0.0012 \\[3pt]
& $\widehat{\mathcal{R}}_{10^6,\mathrm{rel}} (\tilde{\theta}_{\mathrm
{mle}})$ & \multicolumn{1}{c}{$5 \cdot10^{-10}$} & \multicolumn{1}{c}{$9 \cdot10^{-10}$} & \multicolumn{1}{c}{$- 2 \cdot
10^{-9}$} & \multicolumn{1}{c}{$- 2 \cdot10^{-9}$} & \multicolumn{1}{c@{}}{$- 3 \cdot10^{-9}$} \\[6pt]
Example~\ref{ex3} & $\widehat{R}_{10^6}(\hat{\theta})$ & 0.0152 & 0.0054 &
0.0026 & 0.0017 & 0.0013 \\
& $\widehat{R}_{10^4}(\tilde{\theta}_{\mathrm{mle}})$ & 0.0149 & 0.0054
& 0.0026 & 0.0017 & 0.0012 \\[3pt]
& $\widehat{\mathcal{R}}_{10^4,\mathrm{rel}} (\tilde{\theta}_{\mathrm
{mle}})$ & -0.001 & \multicolumn{1}{c}{$-2 \cdot10^{-4}$} & \multicolumn{1}{c}{$- 10^{-8}$} & \multicolumn{1}{c}{$-3 \cdot
10^{-8}$} & \multicolumn{1}{c@{}}{$9 \cdot10^{-8}$} \\[6pt]
Example~\ref{ex4} & $\widehat{R}_{10^6}(\hat{\theta})$ & 0.0468 &0.0192 &
0.0096 & 0.0064 & 0.0048 \\
& $\widehat{R}_{10^6}(\tilde{\theta}_{\mathrm{mle}})$ & 0.0476 & 0.0196
& 0.0099 & 0.0066 & 0.0050 \\
& $\widehat{R}_{10^6}(\tilde{\theta}_{\mathrm{best}})$ & 0.0333 & 0.0136
& 0.0069 & 0.0046 & 0.0035 \\[3pt]
& $\widehat{\mathcal{R}}_{10^6,\mathrm{rel}} (\tilde{\theta}_{\mathrm
{mle}})$ & -0.0160 & -0.0202 & -0.0287 & -0.0271 & -0.0336 \\
& $\widehat{\mathcal{R}}_{10^6,\mathrm{rel}} (\tilde{\theta}_{\mathrm
{best}})$ & 0.4059 & 0.4086 & 0.3992 & 0.4025 & 0.3933 \\[6pt]
Example~\ref{ex5} & $\widehat{R}_{10^6}(\hat{\theta})$ & 0.0504 & 0.0197 &
0.0098 & 0.0065 & 0.0049 \\
& $\widehat{R}_{10^6}(\tilde{\theta}_{\mathrm{mle}})$ &0.0483 &
0.0197 &
0.0099 & 0.0066 & 0.0050 \\[3pt]
& $\widehat{\mathcal{R}}_{10^6,\mathrm{rel}} (\tilde{\theta}_{\mathrm
{mle}})$ & 0.0436 & -0.0019 & -0.0180 & -0.0242 & -0.0263 \\[6pt]
Example~\ref{ex6} & $\widehat{R}_{10^6}(\hat{\theta})$ & 0.0455 & 0.0193 &
0.0098 & 0.0066 & 0.0050 \\
& $\widehat{R}_{10^6}(\tilde{\theta}')$ & 0.0454 & 0.0192 & 0.0098 &
0.0066 & 0.0050 \\[3pt]
& $\widehat{\mathcal{R}}_{10^6,\mathrm{rel}} (\tilde{\theta}')$ & 0.0029
& 0.0029 & 0.0031 & 0.0028 & 0.0030 \\[6pt]
Example~\ref{ex7} & $\widehat{R}_{10^4}(\hat{\theta})$ & 0.050 & 0.022 & 0.012
& 0.008 &0.006 \\
& $\widehat{R}_{10^4}(\tilde{\theta}_{\mathrm{mean}})$ & 0.084 &
0.061 &
0.049 & 0.043 & 0.039 \\
& $\widehat{R}_{10^4}(\tilde{\theta}_{\mathrm{median}})$ & 0.066 & 0.036
& 0.025 & 0.019 & 0.017 \\[1pt]
& $\widehat{R}_{10^4}(\tilde{\theta}_{\mathrm{mspe}})$ & 0.050 &
0.022 &
0.012 & 0.008 & 0.006 \\[3pt]
& $\widehat{\mathcal{R}}_{10^4,\mathrm{rel}} (\tilde{\theta}_{\mathrm
{mean}})$ &-0.40 & -0.64 & -0.76 & -0.82 & -0.85 \\
& $\widehat{\mathcal{R}}_{10^4, \mathrm{rel}} (\tilde{\theta}_{\mathrm
{median}})$ & -0.25 & -0.39 & -0.54 & -0.59 & -0.65 \\
\hline
\end{tabular*}
\end{table}

The numerical results are given in Table \ref{tab1}.
In the first three examples, the risk of our estimator is almost equal
to the one of the m.l.e., whatever the value of $n$. In Example~\ref{ex4}, our
estimator slightly improves the maximum likelihood estimator but has a
risk $40 \%$ larger than the one of $\tilde{\theta}_{\mathrm{best}}$. In
Example~\ref{ex5}, the risk of our estimator is larger than the one of the
m.l.e. when $n = 10$ but is slightly smaller as soon as $n$ becomes
larger than $25$. In Example~\ref{ex6}, the risk of our estimator is {$0.3 \%$}
larger than the one of $\tilde{\theta}'$. In Example~\ref{ex7}, our estimator
significantly improves the empirical mean and the median. Its risk is
comparable to the one of the m.s.p.e.

When the model is regular enough, these simulations show that our
estimation strategy provides an estimator whose risk is very close to
the one of the maximum likelihood estimator. Moreover, our estimator
seems to work rather well in a model where the m.l.e. does not exist
(case of Example~\ref{ex7}).

\subsection{Link with the m.l.e.}
We now study numerically the connection between our estimator and the
m.l.e. when the model is regular enough (that is, in the first three
examples). Let for $c \in\{0.99,0.999, 1\}$, $q_{c}$ be the
$c$-quantile of the random variable $  |\hat{\theta} - \tilde
{\theta
}_{\mathrm{mle}}  |$, and $\hat{q}_c$ be the empirical version based
on $N$ samples ($N = 10^{6}$ in Examples \ref{ex1}, \ref{ex2} and $N = 10^{4}$ in
Example~\ref{ex3}).

\begin{table}
\caption{Connection with the m.l.e.}\label{tab2}
\begin{tabular*}{\textwidth}{@{\extracolsep{\fill}}lllllll@{}}
\hline
& & \multicolumn{1}{l}{$n = 10$} & \multicolumn{1}{l}{$n = 25$} & \multicolumn{1}{l}{$n = 50$} & \multicolumn{1}{l}{$n = 75$} & \multicolumn{1}{l@{}}{$n = 100$} \\
\hline
Example~\ref{ex1} & $\hat{q}_{0.99}$ & $10^{-7}$ & $10^{-7}$ & $10^{-7}$ &
$10^{-7}$ & $10^{-7}$\\
& $\hat{q}_{0.999} $ & $0.07$ & $10^{-7}$ & $10^{-7}$ & $10^{-7}$ &
$10^{-7}$ \\
& $\hat{q}_{1} $ & $1.9$ & $0.3$ & $0.06$ & $0.005$ & $10^{-7}$ \\[3pt]
Example~\ref{ex2} & $\hat{q}_{0.99}$ & $2 \cdot10^{-7}$ & $3 \cdot10^{-7}$ &
$3 \cdot10^{-7}$ & $3 \cdot10^{-7}$ & $3 \cdot10^{-7}$ \\
& $\hat{q}_{0.999} $ & $3 \cdot10^{-7}$ & $3 \cdot10^{-7}$ & $3
\cdot10^{-7}$ & $3 \cdot10^{-7}$ & $3 \cdot10^{-7}$ \\
& $\hat{q}_{1}$ & $3 \cdot10^{-7}$ & $3 \cdot10^{-7}$ & $3 \cdot
10^{-7}$ & $3 \cdot10^{-7}$ & $3 \cdot10^{-7}$ \\ [3pt]
Example~\ref{ex3} & $\hat{q}_{0.99}$ & $10^{-6}$ & $10^{-6}$ & $10^{-6}$ &
$10^{-6}$ & $10^{-6}$ \\
& $\hat{q}_{0.999} $ & $3\cdot10^{-6}$ & $10^{-6}$ & $10^{-6}$ &
$10^{-6}$ & $10^{-6}$ \\
& $\hat{q}_{1}$ & $1.5$ & $0.1$ & $10^{-6}$ & $10^{-6}$ & $10^{-6}$ \\
\hline
\end{tabular*}
\end{table}

Table \ref{tab2} shows that with large probability, our estimator is almost
equal to the m.l.e. This probability is quite high for small values of
$n$ and even more for larger values of $n$. This explains why the risks
of these two estimators are very close in the first three examples.
Note that the value of $\eta$ prevents the empirical quantiles from
being smaller\vadjust{\goodbreak} than something of the order $10^{-7}$ according to the
examples (in Example~\ref{ex3}, the value of $10^{-6}$ is due to the procedure
used to build the m.l.e.).

\subsection{Speed of the procedure} \label{SectionSpeedProcedure} For
the sake of completeness, we specify in Table \ref{figureriskcomparaison} the number of tests that
have been calculated in the preceding examples.

\begin{table}[b]
\caption{Number of tests computed averaged over $10^6$ samples for
Examples \ref{ex1} to \ref{ex6} and over $10^4$ samples for Example \ref{ex7}. The
corresponding standard deviations are in brackets}
 \label{figureriskcomparaison}
\begin{tabular*}{\textwidth}{@{\extracolsep{\fill}}llllll@{}}
\hline
& \multicolumn{1}{l}{$n = 10$} & \multicolumn{1}{l}{$n = 25$} & \multicolumn{1}{l}{$n = 50$} &
\multicolumn{1}{l}{$n = 75$} & \multicolumn{1}{l@{}}{$n = 100$} \\
\hline
Example~\ref{ex1} & 77 (1.4) & 77 (0.9) & 77 (0.7) & 77 (0.6) & 77 (0.5) \\
Example~\ref{ex2} & 293 (1) & 294 (1) & 294 (0.9) & 295 (0.9) & 295 (0.9) \\
Example~\ref{ex3} & 100 (3.5) & 100 (0.5) & 100 (0.001) & 100 (0) & 100 (0) \\
Example~\ref{ex4} & 460 (3) & 461 (1) & 462 (0.6) & 462 (0.4) & 462 (0.3) \\
Example~\ref{ex5} & 687 (0) & 687 (0) & 687 (0) & 687 (0) & 687 (0)\\
Example~\ref{ex6} & 412 (8) & 419 (8) & 425 (8) & 429 (8) & 432 (8) \\
Example~\ref{ex7} & 173,209 (10) & 173,212 (0) & 173,212 (0.9) & 173,206 (12) &
173,212 (0.3) \\
\hline
\end{tabular*}
\end{table}

We observe in Figure \ref{fig1} that the number of tests computed is quite small, except for
Example~\ref{ex7}. The number of tests computed in this example is quite large
because $\underline{r} (\theta,\theta')$ and $\overline{r} (\theta
,\theta')$ are defined by relation (\ref{EqDefintionRDim1}) and
$\alpha
= 1/2$. The smaller $\alpha$, the longer it takes to compute the
estimator. Notice however that is possible to use less tests by
choosing $\kappa$ closer to $\bar{\kappa}$ or by using a more accurate
control of the Hellinger distance $h({\theta},{\theta'})$.

\subsection{Simulations when \texorpdfstring{$s\notin\F$}{$s notin\mathscr{F}$}} \label
{SectionRobustessDim1} In Section~\ref{SectionDim1VraiS}, we were in
the favourable situation where the true density $s$ belonged to the
model $\F$, which may not hold true in practice.
We now work with random variables $X_1,\ldots,X_n$ simulated according
to a density $s \notin\F$ to illustrate the robustness properties of
our estimator.

We propose an example based on the mixture of two uniform laws. We use
the parametric model $\F=  \{ f_{\theta},   \theta\in[0.01,
10] \}$ with $f_{\theta} = \theta^{-1} \1 _{[0, \theta]}$,
take $p
\in(0,1)$ and simulate the data according to the density
\[
s_{p} (x) = (1-p) f_1(x) + p f_2(x)\qquad
\mbox{for all $x \in\R$.}
\]
Set $p_0 = 1-1/\sqrt{2}$. One can check that
\begin{eqnarray*}
h^2(s_p,\F) &=& \cases{ h^2(s_p,f_1),
& \quad $\mbox{if $p \leq p_0$},$ \vspace*{2pt}
\cr
h^2(s_p,f_2),
&\quad $ \mbox{if $p \geq p_0$}$}
\\
&=& \cases{ 1 - \sqrt{2-p}/\sqrt{2}, &\quad  $\mbox{if $p \leq p_0$},$
\vspace*{2pt}
\cr
1 - (\sqrt{2-p}+\sqrt{p})/{2}, & \quad $\mbox{if $p \geq
p_0$,}$}
\end{eqnarray*}
which means that the best approximation of $s_p$ in $\F$ is $f_1$ when
$p < p_0$ and $f_2$ when $p > p_0$.

We now compare our estimator $\hat{\theta}$ to the m.l.e. $\tilde
{\theta
}_{\mathrm{mle}} = \max_{1 \leq i \leq n} X_i$. For a lot of values
of $p$, we simulate $N$ samples of $n$ random variables with
density $s_p$ and investigate the behaviour of the estimator $\tilde
{\theta} \in\{\hat{\theta}, \tilde{\theta}_{\mathrm{mle}}\}$ by
computing the function
\[
\widehat{R}_{p,n,N} (\tilde{\theta}) = \frac{1}{N} \sum
_{i=1}^N h^2 (s_p,
f_{\tilde{\theta}^{(p,i)}}),
\]
where $\tilde{\theta}^{(p,i)}$ is the value of the estimator $\tilde
{\theta}$ corresponding to the $i$th sample whose
density is $s_p$. We draw below the functions $p \mapsto\widehat
{R}_{p,n,N} (\hat{\theta}) $, $p \mapsto\widehat{R}_{p,n,N} (\tilde
{\theta}) $ and $p \mapsto h^2(s_p, \F)$ for $n = 10^2$ and then for $n
= 10^4$.
%
\begin{figure}

\includegraphics{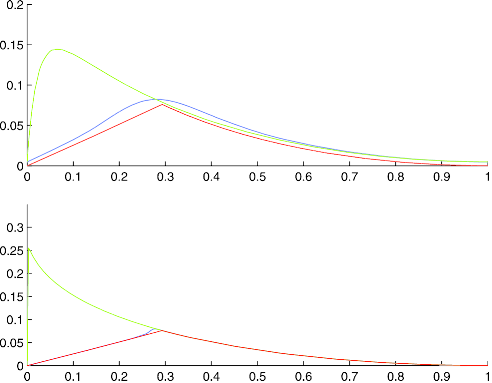}

\caption{Red: $p \mapsto h^2(s_p, \F)$. Blue: $p \mapsto\widehat
{R}_{p,n,5000} (\hat{\theta}) $. Green: $p \mapsto\widehat
{R}_{p,n,5000} (\tilde{\theta}_{\mathrm{mle}})$.}\label{fig1}
\end{figure}

We observe in Figure \ref{fig1} that the m.l.e. is rather good when $p \geq p_0$ and very
poor when $p < p_0$. This can be explained by the fact that
the m.l.e. $\tilde{\theta}_{\mathrm{mle}}$ is close to $2$ as soon as
the number $n$ of observations is large enough. The shape of the
function $p \mapsto\widehat{R}_{p,n,5000} (\hat{\theta}) $ looks more
like the function $p \mapsto h^2(s_p, \F)$. The lower figure suggests
that $ \widehat{R}_{p,n,N} (\hat{\theta}) $ converges to $h^2(s_p,
\F)$
when $n,N$ go to infinity except on a small neighbourhood before~$p_0$.

\section{Models parametrized by a multi-dimensional parameter} \label
{SectionEstimationDimQuel}

\subsection{Assumption}
In the preceding sections, we have dealt with models indexed by a
finite interval of $\R$. We now turn to the multi-dimensional case and
consider models $\F=  \{f_{\ttheta},   \ttheta\in\Theta
 \}
$ indexed by a rectangle
$\Theta= \prod_{j=1}^d [m_j, M_j]$
of $\R^d$ and satisfying a multi-dimensional version of
Assumption~\ref
{HypSurLeModeleQuelquonqueDebutDim1}.

\begin{hyp} \label{HypSurLeModeleQuelquonqueDebutDimD}
There exist positive numbers $\alpha_1,\ldots,\alpha_d$, $\underline
{R}_1,\ldots,\underline{R}_d$, $\overline{R}_1,\ldots,\overline{R}_d$ such
that for all $\ttheta= (\theta_1,\ldots,\theta_d)$, $\ttheta' =
(\theta
_1',\ldots,\theta_d') \in\Theta= \prod_{j=1}^d [m_j, M_j]$,
\[
\sup_{j \in\{1,\ldots,d\}} \underline{R}_j \bigl|
\theta_j - \theta '_j\bigr|^{\alpha_j}
\leq h^2 \bigl({\ttheta}, {\ttheta'} \bigr) \leq \sup
_{j \in\{1,\ldots,d\}} \overline{R}_j \bigl|\theta_j -
\theta'_j\bigr|^{\alpha_j}.
\]
\end{hyp}

\subsection{Definition of the test} \label{SectionDefTestDimd}
As in the one-dimensional case, our estimation strategy is based on the
existence for all $\ttheta, \ttheta' \in\Theta$ of a measurable
function $T(\ttheta, \ttheta')$ of the observations possessing suitable
statistical properties. The definition of this functional is the
natural extension of the one we have proposed in Section~\ref{SectionDefTest}.

Let for $j \in\{1,\ldots,d\}$, $t_j \in(0,d^{1/\alpha_j}]$ and
$\varepsilon_j = t_j (\overline{R} n)^{-1/\alpha_j}$. We introduce the
finite sets
\begin{eqnarray*}
\Theta_{\mathrm{dis}} &=& \bigl\{ (m_1+ k_1
\varepsilon_1, \ldots, m_d+ k_d
\varepsilon_d ), \forall j \in\{1,\ldots, d\}, k_j
\leq(M_j-m_j) \varepsilon_j^{-1}
\bigr\},
\\
\Fd&=& \{f_{\ttheta}, \ttheta\in\Theta_{\mathrm{dis}} \}
\end{eqnarray*}
and the map $\pi$ on $\prod_{j=1}^d [m_j, M_j]$ by
%
\begin{eqnarray}
\pi({\mathbf{x}}) = \bigl(m_1 + \bigl\lfloor(x_1 -
m_1) / \varepsilon _1\bigr\rfloor\varepsilon_1,
\ldots, m_d + \bigl\lfloor(x_d - m_d) /
\varepsilon _d\bigr\rfloor\varepsilon_d \bigr)\nonumber \\
\eqntext{\mbox{for
all $\displaystyle\mathbf{x} = (x_1,\ldots,x_d) \in\prod
_{j=1}^d [m_j, M_j]$},}
\end{eqnarray}
where $\lfloor\cdot\rfloor$ is the integer part. We then define
$T(\ttheta,\ttheta')$ for all $\ttheta, \ttheta' \in\Theta$ by
%
\begin{equation}
\label{eqDefinitionTDimD} {T} \bigl({\ttheta},{\ttheta'}\bigr) =
\overline{T}(f_{\pi(\ttheta)},f_{\pi
(\ttheta')}),
\end{equation}
where $\overline{T}$ is given by (\ref{eqFonctionnalBaraud}).

\subsection{Basic ideas}\label{SectionHeuristiqueDim2}
For the sake of simplicity, we first restrict ourselves to the
dimension $d = 2$. The idea is to build a decreasing sequence $(\Theta
_i)_i$ of rectangles by induction (in the sense of set inclusion). When
there exists $\ttheta_0 \in\Theta$ such that $s = f_{\ttheta_0}$,
these rectangles $\Theta_i$ can be interpreted as confidence sets for
$\ttheta_0$.

We set $\Theta_1 = \Theta$. We suppose that the rectangle $\Theta_i$
has already been built and aim at building $\Theta_{i+1}$.

Let $a_1,b_1,a_2,b_2$ be such that $\Theta_i= [a_1,b_1] \times
[a_2,b_2]$. For all $\ttheta= (\theta_1,\theta_2) \in\Theta_i$,
$\ttheta' = (\theta'_1, \theta'_2) \in\Theta_i$, let $\mathcal{R}
(\ttheta, \ttheta')$ be a rectangle included in $\Theta_i$ and
containing a neighbourhood of $\ttheta$ (for the usual topology on
$\Theta_i$) such that
\[
\mathcal{R} \bigl(\ttheta, \ttheta'\bigr) \subset\B \bigl(\ttheta,
\kappa^{1/2} h \bigl({\ttheta}, {\ttheta'}\bigr) \bigr).
\]
We recall that for all $\ttheta\in\Theta$ and $r > 0$, $\B(\ttheta,
r) =  \{\ttheta' \in\Theta,   h ({\ttheta}, {\ttheta'}) \leq r
 \}$. Let $\mathcal{P}$ and $\mathcal{P}'$ be the two horizontal
sides of the rectangle $\Theta_i$:
\begin{eqnarray*}
\mathcal{P} &=& [a_1, b_1] \times\{a_2\},
\\
\mathcal{P}' &=& [a_1, b_1] \times
\{b_2\}.
\end{eqnarray*}
We begin by building $L+1$ elements $\ttheta^{(\ell)} \in\mathcal{P}$
and $L+1$ elements $\ttheta'^{(\ell)} \in\mathcal{P}'$ in such a way
that if $ \mathcal{R}^{(\ell)} $ designates the set
\begin{eqnarray*}
\mathcal{R}^{(\ell)} = \cases{ \mathcal{R} \bigl(\ttheta^{(\ell)},
\ttheta'^{(\ell)}\bigr), &\quad  $\mbox{if $T \bigl({
\ttheta^{(\ell)}},{\ttheta'^{(\ell)}}\bigr) > 0$,}$
\vspace*{2pt}
\cr
\mathcal{R} \bigl(\ttheta'^{(\ell)},
\ttheta^{(\ell)}\bigr), &\quad  $\mbox{if $T \bigl({\ttheta^{(\ell)}},{
\ttheta'^{(\ell)}}\bigr) < 0$,}$ \vspace*{2pt}
\cr
\mathcal{R}
\bigl(\ttheta^{(\ell)}, \ttheta'^{(\ell)}\bigr) \cup
\mathcal{R} \bigl(\ttheta'^{(\ell)}, \ttheta^{(\ell)}
\bigr), &\quad  $\mbox{if $T \bigl({\ttheta ^{(\ell
)}},{\ttheta'^{(\ell)}}
\bigr) = 0$,}$ }
\end{eqnarray*}
then, either

\begin{equation}
\label{definitiondeP} \mathcal{P} = \bigcup_{\ell= 1}^L
\bigl(\mathcal{R}^{(\ell)} \cap \mathcal{P} \bigr)\quad \mbox{or}\quad
\mathcal{P}' = \bigcup_{\ell
= 1}^L
\bigl(\mathcal{R}^{(\ell)} \cap\mathcal{P}' \bigr).
\end{equation}
The rectangle $\Theta_{i+1}$ is then defined in such a way that\vspace*{-1pt}
\[
\Theta_i {}\Big\backslash{}\bigcup_{\ell=1}^L
\mathcal{R}^{(\ell)} \subset \Theta_{i+1} \subset
\Theta_i\vspace*{-1pt}
\]
and that $\Theta_{i+1} \neq\Theta_i$. Its theoretical existence is
guaranteed by (\ref{definitiondeP}). Besides, it follows from the
heuristics of Section~\ref{SectionHeuristique} that $\Theta_{i+1}$ may
be interpreted as a confidence set for $\ttheta_0$ whenever it exists
(since it contains $\Theta_i \setminus\bigcup_{\ell=1}^L \mathcal
{R}^{(\ell)}$).

It remains to define $\ttheta^{(\ell)}$ and $\ttheta'^{(\ell)}$ for all
$\ell\in\{1,\ldots,L+1\}$. We define $\ttheta^{(1)} = (a_1,a_2)$ as
the bottom left corner of $\Theta_i$ and $\ttheta'^{(1)} = (a_1,b_2)$
as the top left corner of $\Theta_i$.
The definition of $\ttheta^{(2)}$ and $\ttheta'^{(2)}$ depends on the
sign of $T ({\ttheta^{(1)}},{\ttheta'^{(1)}})$:
\begin{itemize}
\item If $T ({\ttheta^{(1)}},{\ttheta'^{(1)}}) > 0$, we define
$\ttheta
^{(2)}$ as the bottom right corner of $\mathcal{R} (\ttheta^{(1)},
\ttheta'^{(1)}) $ and $\ttheta'^{(2)} = \ttheta'^{(1)}$.
\item If $T ({\ttheta^{(1)}},{\ttheta'^{(1)}}) < 0$, we define
$\ttheta
^{(2)} = \ttheta^{(1)}$ and $\ttheta'^{(2)}$ as the top right corner of
$\mathcal{R} (\ttheta'^{(1)}, \ttheta^{(1)}) $.
\item If $T ({\ttheta^{(1)}},{\ttheta'^{(1)}}) = 0$, we define
$\ttheta
^{(2)}$ as the bottom right corner of $\mathcal{R} (\ttheta^{(1)},
\ttheta'^{(1)}) $ and $\ttheta'^{(2)}$ as the top right corner of
$\mathcal{R} (\ttheta'^{(1)}, \ttheta^{(1)}) $.
\end{itemize}
If $\ttheta^{(2)} = (b_1,a_2)$ or if $\ttheta'^{(2)} = (b_1,b_2)$,
which means that one of these two points is a right corner of $\Theta
_i$, we set $L = 1$. 
In the contrary case, we define $\ttheta^{(3)}$ either as the bottom
right corner of $\mathcal{R} (\ttheta^{(2)}, \ttheta'^{(2)}) $ or as
$\ttheta^{(2)}$, according to the sign of $T ({\ttheta^{(2)}},
{\ttheta
'^{(2)}})$. Similarly, $\ttheta'^{(3)}$ is either $\ttheta'^{(2)}$ or
the top right corner of $\mathcal{R} (\ttheta'^{(2)}, \ttheta^{(2)}) $.
If one of the points $\ttheta^{(3)}$, $\ttheta'^{(3)}$ is a right
corner of $\Theta_i$, we set $L = 2$. 
Otherwise, we build $\ttheta^{(4)}$, $\ttheta'^{(4)}$ and so on. More
precisely, we build $\ttheta^{(\ell)}$ and $\ttheta'^{(\ell)}$ until
that one of these two elements becomes a right corner of $\Theta_i$. We
then stop the construction and set $L = \ell-1$. See Figure \ref{fig2} for an
illustration.
%
\begin{figure}[b]

\includegraphics{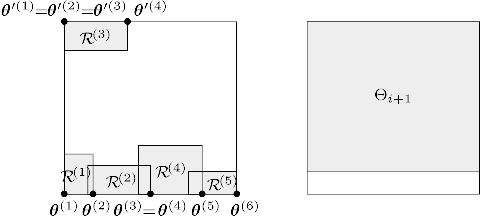}

\caption{Illustration when $L = 5$, $T (\ttheta^{(i)},\ttheta'^{(i)})
> 0$ for $i \in\{1,2,4,5\}$ and $T (\ttheta^{(3)},\ttheta'^{(3)}) < 0$.}
\label{fig2}
\end{figure}

\begin{remark*}
we define $\Theta_{i+1}$ as a rectangle to make the
procedure easier to implement in practice. Note that this rectangle
$\Theta_{i+1}$ is of the form $\Theta_{i+1} = [a_1,b_1] \times
[a_2',b_2']$ where $a_2',b_2'$ satisfy $b_2'-a_2' < b_2-a_2$.
We may also adapt the preceding ideas to build a confidence set $\Theta
_{i+1}$ of the form $\Theta_{i+1} = [a_1',b_1'] \times[a_2,b_2]$ where
$a_1',b_1'$ satisfy $b_1'-a_1' < b_1-a_1$.
\end{remark*}

We shall build the rectangles $\Theta_i$ until their diameters become
sufficiently small. The estimator we shall consider will then be the
center of the last rectangle built.

\subsection{Estimation procedure} \label{SectionEstimationGeneral}
\subsubsection{General scheme} \label{SectionGeneralScheme}
In this section, we aim at designing an estimator in dimension 2 or
higher. 
We build a finite sequence of nested rectangles $(\Theta_i)_{1 \leq i
\leq N}$ of $\R^d$ included in $\Theta$ by induction.
These rectangles can be interpreted as confidence sets for $\ttheta_0$
whenever it exists. We set $\Theta_1 = \Theta$. As long as the size of
$\Theta_i$ is large enough (in a suitable sense), we use an algorithm
(that we present below) to build $\Theta_{i+1}$ from $\Theta_i$. As
soon as the size of $\Theta_i$ becomes small enough, we stop the
construction of the rectangles. We then denote this last rectangle by
$\Theta_N$ and define our estimator ${\tttheta}$ as the center of
$\Theta_N$.

We now explain the general principle for constructing $\Theta_{i+1}$
from $\Theta_i$. Let $a_j, b_j$ be the numbers such that $\Theta_i=
\prod_{j=1}^d [a_j, b_j]$ and let $k$ be an integer in $\{1,\ldots,d\}$
to be specified later. The confidence set $\Theta_{i+1}$ will be of
the form
%
\begin{equation}
\label{eqDefPossible1Thetaiplus1} \Theta_{i+1}= \Biggl(\prod
_{j=1}^{k-1} [a_j, b_j]
\Biggr) \times \bigl[a_k',b'_k
\bigr] \times \Biggl(\prod_{j=k+1}^{d}
[a_j, b_j] \Biggr)
\end{equation}
with $a_k', b_k' \in[a_k, b_k]$ such that $b_k' - a_k' < b_k - a_k$.
In order to be a little more precise, let us consider $\kappa\in
(0,\bar{\kappa})$, and, for all $\ttheta, \ttheta' \in\Theta_i$,
let $
\mathcal{R} (\ttheta, \ttheta') $ be a rectangle included in $\Theta_i$
and containing a neighbourhood of $\ttheta$ (for the usual topology on
$\Theta_i$) such that
\[
\mathcal{R} \bigl(\ttheta, \ttheta'\bigr) \subset\B \bigl(\ttheta,
\kappa^{1/2} h \bigl({\ttheta}, {\ttheta'}\bigr) \bigr).
\]
Let $\mathcal{P}$ and $\mathcal{P}'$ be the two following opposite
faces of $\Theta_i$:
\begin{eqnarray*}
\mathcal{P} &=& \bigl\{(\theta_1,\ldots, \theta_{k-1},
a_k, \theta _{k+1},\ldots, \theta_d), \ttheta
\in\Theta_i \bigr\},
\\
\mathcal{P}' &=& \bigl\{(\theta_1,\ldots,
\theta_{k-1}, b_k, \theta _{k+1},\ldots,
\theta_d), \ttheta\in\Theta_i \bigr\}.
\end{eqnarray*}
As in Section~\ref{SectionHeuristiqueDim2}, the construction of
$\Theta
_{i+1}$ is based on the existence of $L+1$ elements $\ttheta^{(\ell)}
\in\mathcal{P}$ and $L+1$ elements $\ttheta'^{(\ell)} \in\mathcal
{P}'$ satisfying one of the two following relations:
%
\begin{equation}
\mathcal{P} = \bigcup_{\ell= 1}^L \bigl(
\mathcal{R}^{(\ell)} \cap \mathcal{P} \bigr)\quad \mbox{or}\quad \mathcal{P}'
= \bigcup_{\ell= 1}^L \bigl(
\mathcal{R}^{(\ell)} \cap \mathcal{P}' \bigr), \label{defDePoudePprime}
\end{equation}
where $\mathcal{R}^{(\ell)} $ stands for the set
\begin{eqnarray*}
\mathcal{R}^{(\ell)} = \cases{ \mathcal{R} \bigl(\ttheta^{(\ell)},
\ttheta'^{(\ell)}\bigr), & \quad $\mbox{if $T \bigl({
\ttheta^{(\ell)}},{\ttheta'^{(\ell)}}\bigr) > 0$},$
\vspace*{2pt}
\cr
\mathcal{R} \bigl(\ttheta'^{(\ell)},
\ttheta^{(\ell)}\bigr), &\quad  $\mbox{if $T \bigl({\ttheta^{(\ell)}},{
\ttheta'^{(\ell)}}\bigr) < 0$},$ \vspace*{2pt}
\cr
\mathcal{R}
\bigl(\ttheta^{(\ell)}, \ttheta'^{(\ell)}\bigr) \cup
\mathcal{R} \bigl(\ttheta'^{(\ell)}, \ttheta^{(\ell)}
\bigr), &\quad  $\mbox{if $T \bigl({\ttheta ^{(\ell
)}},{\ttheta'^{(\ell)}}
\bigr) = 0$.}$}
\end{eqnarray*}
Thanks to (\ref{defDePoudePprime}), there exist $a_k', b_k' \in[a_k,
b_k]$ such that $b_k' - a_k' < b_k - a_k$ and such that the rectangle
$\Theta_{i+1}$ defined by (\ref{eqDefPossible1Thetaiplus1}) satisfies
%
\begin{eqnarray}
\Theta_i {}\Big\backslash{}\bigcup_{\ell=1}^L
\mathcal{R}^{(\ell)} \subset \Theta_{i+1} \subset
\Theta_i. \label{RelatioNThetaiInclu}
\end{eqnarray}
The heuristics developed in Section~\ref{SectionHeuristique} show that
$\Theta_{i+1}$ may be interpreted as a confidence set for $\ttheta_0$
(whenever it exists). It remains to build $\Theta_{i+1}$ in a
constructive way.

\subsubsection{Construction of the confidence set \texorpdfstring{$\Theta_{i+1}$}{$Theta_{i+1}$} from \texorpdfstring{$\Theta_i$}{$Theta_i$}}
We present in this section an algorithm easy to code on a computer and
taking back the ideas of the preceding section to build $\Theta_{i+1}$
from $\Theta_i$. In what follows, it is convenient to introduce
positive numbers $\overline{r}_{\Theta_i,j} (\ttheta,\ttheta')$,
$\underline
{r}_{\Theta_i,j} (\ttheta,\ttheta')$ such that
\[
\mathcal{R} \bigl(\ttheta, \ttheta'\bigr) = \Theta_i
\cap\prod_{j=1}^d \bigl[
\theta_j - \underline{r}_{\Theta_i,j} \bigl(\ttheta,
\ttheta'\bigr), \theta _j+\overline{r}_{\Theta_i,j}
\bigl(\ttheta,\ttheta'\bigr) \bigr].
\]
We recall that this set must satisfy
%
\begin{equation}
\mathcal{R} \bigl(\ttheta, \ttheta'\bigr) \subset\B \bigl(\ttheta,
\kappa^{1/2} h \bigl({\ttheta}, {\ttheta'}\bigr) \bigr)
\label{eqInclusionRC1}.
\end{equation}
We also consider for all $j \in\{1,\ldots,d\}$, a number $\underline
{R}_{\Theta_i,j} \in[\underline{R}_j,+\infty)$ such that
%
\begin{equation}
\label{eqMinorationRCj} h^2 \bigl({\ttheta}, {\ttheta'} \bigr)
\geq\sup_{1 \leq j \leq d} \underline{R}_{\Theta_i,j} \bigl|
\theta_j - \theta'_j\bigr|^{\alpha_j}\qquad
\mbox {for all $\ttheta$, $\ttheta' \in\Theta_i$.}
\end{equation}
We finally consider for all $j \in\{1,\ldots,d\}$, a one-to-one map
$\psi_j$ from $\{1,\ldots,d-1\}$ into $\{1,\ldots,d\}\setminus\{j\}$.

We set $\Theta_1 =\Theta$. Given $\Theta_i$, we define $\Theta_{i+1}$
by using the algorithm below.
This algorithm ensues from the strategy described in the preceding
section. It defines $k$, builds the elements $\ttheta^{(\ell)}$,
$\ttheta'^{(\ell)}$ and, lastly returns $\Theta_{i+1}$.
\begin{algorithm}[H]
\caption{Definition of $\Theta_{i+1}$ from $\Theta_i$}
\label{algoConstructionDimQuelquonqueAvant}
\begin{algorithmic}[1]
\REQUIRE$\Theta_i = \prod_{j=1}^d [a_j, b_j]$
\STATE Choose ${k} \in\{1,\ldots,d\}$ such that
\[
\underline{R}_{\Theta_i, k} (b_{k} - a_{k})^{\alpha_k}
= \max_{1
\leq j
\leq d} \underline{R}_{\Theta_i,j} (b_{j}
- a_{j})^{\alpha_j}
\]
\STATE$\ttheta= (\theta_1,\ldots,\theta_d) \leftarrow(a_1,\ldots,a_d)$
\STATE$\ttheta' = (\theta_1',\ldots,\theta_d') \leftarrow
(a_1,\ldots
,a_{k-1},b_k,a_{k+1},\ldots,a_d)$
\STATE${\varrho_j} \leftarrow\overline{r}_{\Theta_i,j} (\ttheta
,\ttheta
')$ and $\varrho_j'\leftarrow\overline{r}_{\Theta_i,j} (\ttheta
',\ttheta)$
for all $j \in\{1,\ldots,d\} \setminus\{k\}$
\algstore{coupemonalgodimd}
\end{algorithmic}
\end{algorithm}
%
%
\begin{algorithm} 
%
\begin{algorithmic}[1]
\algrestore{coupemonalgodimd}
\STATE$\varrho_{k} \leftarrow(b_k - a_k)/2$ and $\varrho_k'
\leftarrow(b_k - a_k)/2$
\REPEAT
\STATE$\mathrm{Test} \leftarrow T(\ttheta,\ttheta') $
\STATE For all $j \in\{1,\ldots,d\}$, $\overline{r}_j \leftarrow\overline
{r}_{\Theta_i,j} (\ttheta,\ttheta') $, $\overline{r}_j' \leftarrow\overline
{r}_{\Theta_i,j} (\ttheta',\ttheta)$, $\underline{r}_j' \leftarrow
\underline{r}_{\Theta_i,j} (\ttheta',\ttheta)$
\IF{$\mathrm{Test} \geq0$}
\STATE$\varrho_{\psi_{k}(1)} \leftarrow\overline{r}_{\psi_k(1)} $
\STATE$\varrho_{\psi_k(j)} \leftarrow\min(\varrho_{\psi_k(j)},
\overline
{r}_{\psi_k(j)})$ for all $j \in\{2,\ldots, d-1\}$
\STATE$\varrho_{k} \leftarrow\min(\varrho_{k}, \overline{r}_{k})$
\STATE$J \leftarrow \{1 \leq j \leq d -1,  \theta_{\psi_{k}(j)}
+ \varrho_{\psi_{k}(j)} < b_{\psi_{k}(j)}  \}$
\IF{$J \neq\varnothing$}
\STATE$\mathfrak{j}_{\mathrm{min}} \leftarrow\min J$
\STATE$\theta_{\psi_{k}(j)} \leftarrow a_{\psi_{k}(j)}$ for all $j
\leq\mathfrak{j}_{\mathrm{min}} - 1$
\STATE$\theta_{\psi_{k}(\mathfrak{j}_{\mathrm{min}})} \leftarrow
\theta
_{\psi_{k}(\mathfrak{j}_{\mathrm{min}} )} + \varrho_{\psi
_{k}(\mathfrak
{j}_{\mathrm{min}})}$
\ELSE
\STATE$\mathfrak{j}_{\mathrm{min}} \leftarrow d$
\ENDIF
\ENDIF
\IF{$\mathrm{Test} \leq0$}
\STATE$\varrho_{\psi_{k}(1)}' \leftarrow\overline{r}'_{\psi_k(1)}$
\STATE$\varrho_{\psi_{k}(j)}' \leftarrow\min(\varrho_{\psi_{k}(j)}',
\overline{r}'_{\psi_{k}(j)})$ for all $j \in\{2,\ldots, d-1\}$
\STATE$\varrho_{k}' \leftarrow\min(\varrho_{k}', \underline{r}'_{k})$
\STATE$J' \leftarrow \{1 \leq j \leq d -1,  \theta_{\psi
_{k}(j)}' + \varrho_{\psi_{k}(j)}' < b_{\psi_{k}(j)} \}$
\IF{$J' \neq\varnothing$}
\STATE$\mathfrak{j}_{\mathrm{min}}' \leftarrow\min J'$
\STATE$\theta_{\psi_{k}(j)}' \leftarrow a_{\psi_{k}(j)}$ for all $j
\leq\mathfrak{j}_{\mathrm{min}}' - 1$
\STATE$\theta_{\psi_{k}(\mathfrak{j}_{\mathrm{min}} ')}' \leftarrow
\theta_{\psi_{k}(\mathfrak{j}_{\mathrm{min}} ')}' + \varrho_{\psi
_{k}(\mathfrak{j}_{\mathrm{min}} ')}'$
\ELSE
\STATE$\mathfrak{j}_{\mathrm{min}}' \leftarrow d$
\ENDIF
\ENDIF
\UNTIL{$\mathfrak{j}_{\mathrm{min}} = d$ or $\mathfrak{j}_{\mathrm
{min}} '
= d$}
\IF{$ \mathfrak{j}_{\mathrm{min}} = d$}
\STATE$a_{k} \leftarrow a_{k} + \varrho_{k}$
\ENDIF
\IF{ $\mathfrak{j}_{\mathrm{min}} ' = d$}
\STATE$b_{k} \leftarrow b_{k} - \varrho_{k}' $
\ENDIF
\STATE$\Theta_{i+1} \leftarrow\prod_{j=1}^{d} [a_j, b_j] $
\RETURN$\Theta_{i+1}$
\end{algorithmic}
\end{algorithm}
The parameters $\kappa$, $t_j$, $\overline{r}_{\Theta_i,j} (\ttheta
,\ttheta
')$, $\underline{{r}}_{\Theta_i,j} (\ttheta,\ttheta')$ can be
interpreted as in dimension $1$. We have introduced a new parameter
$\underline{R}_{\Theta_i,j}$ whose role is to control more accurately
the Hellinger distance in order to define $k$. Sometimes, the
computation of this parameter is difficult in practice. In this case,
we can overcome this issue by remarking that for all $\ttheta, \ttheta'
\in\Theta$,
\[
h^2 \bigl({\ttheta}, {\ttheta'} \bigr) \geq\sup
_{1 \leq j \leq d} \underline{R} \bigl|\theta_j -
\theta'_j\bigr|^{\alpha_j}\qquad \mbox{with $\underline{R } =
\min_{1 \leq j \leq d} \underline{R}_j$,}
\]
which means that we can always assume that $\underline{R}_j$ is
independent of $j$. Choosing $\underline{R}_{\Theta_i, j} =
\underline
{R}$ then simplifies the only line where this parameter is involved
(line 1). It becomes
$ (b_{k} - a_{k})^{\alpha_k} = \max_{1 \leq j \leq d} (b_{j} -
a_{j})^{\alpha_j}$
and $k$ can be calculated without computing $\underline{R}$.

\subsubsection{Construction of the estimator}
As explained in Section~\ref{SectionGeneralScheme}, we only build a
finite number of rectangles $\Theta_i$ in order to define our
estimator. We stop their construction when they become small enough.
More precisely, we consider $d$ positive numbers $\eta_1,\ldots,\eta_d$
and use the following algorithm to
design $\tttheta$.\vspace*{-3pt}
\begin{algorithm}[h]
\caption{Construction of the estimator}
\label{algoConstructionDimQuelquonque}
\begin{algorithmic}[1]
\STATE Set $a_j = m_j$ and $b_j = M_j$ for all $j \in\{1,\ldots,d\}$
\STATE$i \leftarrow0$
\WHILE{there exists $j \in\{1,\ldots,d\}$ such that $ b_j - a_j >
\eta_j$}
\STATE$i \leftarrow i + 1$
\STATE Build $\Theta_i$ and set $a_1,\ldots,a_d$, $b_1,\ldots,b_d$ such
that $\prod_{j=1}^d [a_j,b_j] = \Theta_i$
\ENDWHILE
\RETURN
\[
\tttheta= \biggl(\frac{a_1 + b_1}{2}, \ldots, \frac{a_d + b_d}{2} \biggr)
\]
\end{algorithmic}\vspace*{-3pt}
\end{algorithm}

The convergence of the two preceding algorithms is guaranteed under
mild conditions on $\overline{r}_{\Theta_i,j} (\ttheta,\ttheta')$ and
$\underline{r}_{\Theta_i,j} (\ttheta,\ttheta')$. We refer to
Section~\ref{SectionDefinitioNRCDimD} for more details on this point.

\subsection{A risk bound} \label{SectionPropEstimateurDimD}
The risk of $\tttheta$ can be bounded as soon as the preceding
parameters are suitably chosen.

\begin{thmm} \label{54515215415152151514}
Suppose that Assumption~\ref{HypSurLeModeleQuelquonqueDebutDimD} holds
with $d \geq2$. Let $\bar{\kappa}$ be defined by (\ref
{eqEsperanceTest}), and assume that $\kappa\in(0, \bar{\kappa})$.
Suppose that for all $j \in\{1,\ldots,d\}$, $t_j \in(0,d^{1/\alpha_j}]$,
$\varepsilon_j = t_j (\overline{R}_j n)^{-1/\alpha_j}$, and $\eta_j
\in
(0, d^{1/\alpha_j} (\overline{R}_j n)^{-1/\alpha_j}]$.
Suppose moreover that for all $i$, $\ttheta, \ttheta' \in\Theta_i$,
the numbers $\overline{r}_{\Theta_i,j} (\ttheta,\ttheta')$, $\underline
{{r}}_{\Theta_i,j} (\ttheta,\ttheta')$,
are such that (\ref{eqInclusionRC1}) holds and that the two preceding
algorithms converge.

Then, for all $\xi> 0$, the estimator $\tttheta$ derived from
Algorithm \ref{algoConstructionDimQuelquonque} satisfies
\[
\P \biggl[ C h^2(s,f_{\tttheta}) \geq h^2(s, \F) +
\frac{D_{\F}}{n} + \xi \biggr] \leq \mathrm{e}^{- n \xi},
\]
where $D_{\F} = d \vee\sum_{j=1}^d \log (1 + t_j^{-1}  ( (d/
\bar{\bolds{\alpha}}) (c \overline{R}_j / \underline{R}_j)
)^{1/\alpha_j} )$, with $c$ depending only on $\kappa$, $
\bar{\bolds{\alpha}}$ the harmonic mean of ${\bolds{\alpha}}$, and where
$C > 0$ depends only on $\kappa$ and $(\overline{R}_j/\underline
{R}_j)_{1 \leq j \leq d}$.
\end{thmm}

We can also prove that the estimator is asymptotically very close to
the m.l.e. when the model $\F$ is regular enough and contains $s$. We
refer to Theorem~\ref{thmGeneralLienAvecMLE} in Section~\ref{subsectionThmprincipalMLE}.

\subsection{Choice of \texorpdfstring{$\overline{r}_{\Theta_i,j} (\ttheta,\ttheta')$}{$\overline{r}_{Theta_i,j} (\bolds{theta},\bolds{theta}')$} and
\texorpdfstring{$\underline{{r}}_{\Theta_i,j} (\ttheta,\ttheta')$}{$\underline{{r}}_{Theta_i,j}(\bolds{theta},\bolds{theta}')$}} \label
{SectionDefinitioNRCDimD}
We now briefly discuss the choice of the parameters $\overline{{r}}_{\Theta
_i,j} (\ttheta,\ttheta')$, $\underline{{r}}_{\Theta_i,j} (\ttheta
,\ttheta')$. Note that they must be calculated in practice since they
are involved in Algorithm \ref{algoConstructionDimQuelquonqueAvant}. %
It turns out that the two preceding algorithms converge and that the
numerical complexity of the estimation procedure can be theoretically
upper bounded when $\overline{r}_{\Theta_i,j} (\ttheta,\ttheta')$ and
$\underline{{r}}_{\Theta_i,j} (\ttheta,\ttheta')$ are larger than
\[
\label{DefinitionRDimensionQuelquonque1} \Bigl( {\kappa} \sup_{1 \leq k \leq d} \bigl\{ (\underline
{R}_{k}/\overline{R}_{j} )\bigl |\theta'_k
- \theta_k\bigr|^{\alpha
_k} \bigr\} \Bigr)^{1/\alpha_j},
\]
which is in particular true when they are larger than $  ( (\kappa
/ \overline{R}_{j}) h^2(f_{\ttheta}, f_{\ttheta'})  )^{1/\alpha_j}$.
This bound may be found in Proposition~6 of Chapter~6 of Sart \cite
{SartThese} (it is omitted here to reduce the size of the paper).
Besides, the larger $\overline{{r}}_{\Theta_i,j} (\ttheta,\ttheta')$ and
$\underline{{r}}_{\Theta_i,j} (\ttheta,\ttheta')$, the faster the
convergence of the two algorithms.
They should therefore be as large as possible so that (\ref
{eqInclusionRC1}) holds. Note that changing the values of $\overline
{{r}}_{\Theta_i,j} (\ttheta,\ttheta')$ and $\underline{{r}}_{\Theta
_i,j} (\ttheta,\ttheta')$ may influence the value of the
estimator {$\tttheta$} but does not modify its theoretical properties.

We refer to Sections 6 and 8 of Chapter~6 of Sart \cite{SartThese} for
numerical simulations (the results are similar to dimension one) as
well as for more information on the practical implementation of the procedure.

\section{Proofs} \label{SectionProofs}

\subsection{Preliminary results on the estimation procedure} \label
{SectionGeneralTheoreticalResults}
The estimators we have built in the preceding sections were based on
particular sequences of subsets $(\Theta_i)_i$ of $\R^d$ that could be
interpreted as confidence sets for the true parameter $\ttheta_0$
whenever it exists. In this section, we make explicit the assumptions
we need to consider on the $(\Theta_i)$ in order to ensure that the
resulting estimator possesses good statistical properties.

The results of this section simultaneously cover the cases of models
indexed by a one-dimensional parameter (that is, $d = 1$) and those
indexed by a multi-dimensional parameter (that is, $d \geq2$). They
will allow us to prove the theoretical properties of the estimators
considered in the preceding sections.

\subsubsection{A risk bound} \label{subsectionThmprincipal}
%
\begin{thmm} \label{ThmPrincipal}
Suppose that Assumption~\ref{HypSurLeModeleQuelquonqueDebutDimD} holds.
Let $\kappa\in(0,\bar{\kappa})$, and let $\Theta_1 \cdots\Theta_N$
be $N$ non-empty subsets of $\Theta$ such that $\Theta_1 =\Theta$. For
all $j \in\{1,\ldots,d\}$, let $t_j$ be an arbitrary number in
$(0,d^{1/\alpha_j}]$ and $\varepsilon_j = t_j (\overline{R}_j
n)^{-1/\alpha_j}$.
Assume that for all $i \in\{1,\ldots, N-1\}$, there exists
$L_i \geq1$ such that for all $\ell\in\{1,\ldots,L_i\}$, there exist
two elements $\ttheta^{(i,\ell)} \neq\ttheta'^{(i,\ell)} $ of
$\Theta
_i$ such that
%
\begin{equation}
\label{DefintiionBiell} \Theta_i {}\Big\backslash{}\bigcup
_{\ell=1}^{L_i} B^{(i,\ell)} \subset \Theta
_{i+1} \subset\Theta_i,
\end{equation}
where $B^{(i,\ell)} $ is the set defined by
\begin{eqnarray*}
B^{(i,\ell)} = \cases{ \B \bigl(\ttheta^{(i,\ell)}, r_{i,\ell}
\bigr), & \quad $\mbox{if $T \bigl({\ttheta ^{(i,\ell)}},{\ttheta'^{(i,\ell)}}
\bigr) > 0$},$ \vspace*{2pt}
\cr
\B \bigl(\ttheta'^{(i,\ell)},
r_{i,\ell} \bigr), &\quad  $\mbox{if $T \bigl({\ttheta ^{(i,\ell)}},{
\ttheta'^{(i,\ell)}}\bigr) < 0$},$ \vspace*{2pt}
\cr
\B \bigl(
\ttheta^{(i,\ell)}, r_{i,\ell} \bigr) \cup\B \bigl(\ttheta
'^{(i,\ell)}, r_{i,\ell} \bigr),&\quad $\mbox{if $T \bigl({
\ttheta^{(i,\ell
)}},{\ttheta'^{(i,\ell)}}\bigr) = 0$},$}
\end{eqnarray*}
where $r_{i,\ell}^2 = \kappa h^2 ({\ttheta^{(i,\ell)}}, {\ttheta
'^{(i,\ell)}})$ and $T$ the functional defined by (\ref
{eqDefinitionTDimD}). 
Let $\ttheta_0$ be an arbitrary element of $\Theta$ such that
\[
h^2(s,f_{\ttheta_0}) \leq h^2(s,\F) + 1/n
\]
and $\delta$ be a non-negative map from $\Theta^2$ such that $\delta^2
(\ttheta,\ttheta) = 0$ for all $\ttheta\in\Theta$ and
%
\begin{equation}
\label{defdeDelta} \sup_{\ttheta,\ttheta' \in\Theta_i} \delta^2 \bigl(\ttheta,
\ttheta'\bigr) \leq \inf_{1 \leq\ell\leq L_i} h^2
\bigl({\ttheta^{(i,\ell)}},{\ttheta '^{(i,\ell
)}}\bigr)\qquad
\mbox{for all $ i \in\{1,\ldots, N\}$}.
\end{equation}
Then, for all $\xi> 0$,
\[
\P \biggl[ C \inf_{\ttheta\in\Theta_N} \delta^2 (
\ttheta_0, \ttheta) \geq h^2(s, \F) + \frac{D_{\F}^{(n)}}{n}
+ \xi \biggr] \leq \mathrm{e}^{- n
\xi},
\]
where $C > 0$ depends only on $\kappa$ and where
\[
D_{\F}^{(n)} = \max \Biggl\{d, \sum
_{j=1}^d \log \bigl(1 + t_j^{-1}
\bigl( (d/ \bar{\bolds{\alpha}}) \bigl(c \overline{R}_j /
\underline {R}_j^{(n)}\bigr) \bigr)^{1/\alpha_j} \bigr)
\Biggr\}.
\]
In the definition of $D_{\F}^{(n)}$, $\bar{\bolds{\alpha}}$ is the
harmonic mean of $\bolds{{\alpha}}$, $c$ depends only on $\kappa$,
and $\underline{R}^{(n)}_j$ is any positive number
such that $\underline{R}^{(n)}_j \geq\underline{R}_j$ and such that
\[
h^2\bigl({\ttheta}, {\ttheta'}\bigr) \geq\sup
_{1 \leq j \leq d} \underline {R}_j^{(n)} \bigl|
\theta_j - \theta'_j\bigr|^{\alpha_j}
\]
for all $\ttheta, \ttheta' \in\Theta$ satisfying $h^2({\ttheta
},{\ttheta'}) \leq\frac{c}{n}\sum_{j=1}^d \log (1 + t_j^{-1}
(M_j-m_j)  (\overline{R}_j n )^{1/\alpha_j}  )$.
\end{thmm}

\begin{remark*}
In this theorem, the sets $(\Theta_i)$, the numbers
$(L_i)$ and $N$ as well as the elements $\ttheta^{(i,\ell)}$,
$\ttheta
'^{(i,\ell)}$ may be random.
\end{remark*}

This theorem implies Theorem~\ref{54515215415152151514}. Indeed,
its assumptions are fulfilled when $ \underline{R}_j^{(n)} =
\underline
{R}_j$, when the $(\Theta_i)$ are those provided by Algorithm \ref
{algoConstructionDimQuelquonqueAvant}, when the elements $\ttheta
^{(i,\ell)}$ and $\ttheta'^{(i,\ell)}$ correspond to those defined in
Section~\ref{SectionEstimationGeneral} (the index $i$ has been omitted
in that section for ease of reading), and when $\delta^2$ is defined by
\[
\delta^2 \bigl(\ttheta, \ttheta'\bigr) = \sup
_{j \in\{1,\ldots,d\}} \underline {R}_j \bigl|\theta_j -
\theta'_j\bigr|^{\alpha_j}.
\]
%
The fact that (\ref{DefintiionBiell}) holds follows from the fact that
$\Theta_{i+1}$ has been built in such a way that (\ref
{RelatioNThetaiInclu}) holds. However, this point has only be claimed
and has not been proved. Its rigorous proof is quite long and is
therefore postponed to the \hyperref[SectionAnnexe1]{Appendix}. The fact that
(\ref{defdeDelta}) holds follows from the choice of $k$ in
Algorithm \ref{algoConstructionDimQuelquonqueAvant};
see the \hyperref[SectionAnnexe1]{Appendix}.

The above theorem then asserts that
\[
\P \biggl[ C \inf_{\ttheta\in\Theta_N} \sup_{j \in\{1,\ldots,d\}}
\underline{R}_j |\theta_{0,j} - {\theta}_j|^{\alpha_j}
\geq h^2(s, \F) + \frac{D_{\F}}{n} + \xi \biggr] \leq
\mathrm{e}^{- n \xi},
\]
where $C$ depends only on $\kappa$ and where $D_{\F}$ is defined in
Theorem~\ref{54515215415152151514}.
By using the triangular inequality, Assumption~\ref
{HypSurLeModeleQuelquonqueDebutDimD}, and the fact that the estimator
$\tttheta$ of Theorem~\ref{54515215415152151514} is very close to
any element $\ttheta$ of $\Theta_N$ (since its size is very small), we
finally get
\begin{eqnarray*}
\P \biggl[ C' h^2(s,f_{{\tttheta}})
\geq h^2(s, \F) + \frac{D_{\F
}}{n} + \xi \biggr] \leq
\mathrm{e}^{- n \xi}, 
\end{eqnarray*}
where $C'$ depends on $\kappa$ and $ \sup_{j \in\{1,\ldots,d\}}
\overline{R}_j /\underline{R}_j $.

\begin{remark*} In some models, $\underline{R}_j^{(n)}$ can be
chosen much larger than $\underline{R}_j$. This refinement is omitted
in Theorems \ref{ThmPrincipalDim1} and \ref{54515215415152151514}
for ease of presentation.
\end{remark*}

\subsubsection{Connection with the maximum likelihood estimator}
\label{subsectionThmprincipalMLE}
In this section, we carry out the general result that link our
estimator to the maximum likelihood one. In particular, it manages to
deal with multi-dimensional parametric models.

We need to introduce the following notation. We define $\mathring
{\Theta}$ as the interior of $\Theta$ and $l_{\ttheta} (x) = \log
f_{\ttheta} (x)$. The gradient of the map $\ttheta\mapsto\log
f_{\ttheta} (x)$ is denoted by $\dot{l}_{\ttheta} (x)$ and its Hessian
matrix by $\ddot{l}_{\ttheta} (x)$. The notation $(\cdot)^T$ represents
the transpose of a vector or a matrix. The Euclidean norm and its
induced matrix norm are both denoted by $\|\cdot\|$. We denote the log
likelihood by $L(\ttheta) = n^{-1} \sum_{i=1}^n l_{\theta} (X_i)$.

\begin{hyp} \label{AssumptionPourMLE}
The following conditions are satisfied:
\begin{longlist}[(iii)]
\item[(i)] Assumption~\ref{HypSurLeModeleQuelquonqueDebutDimD} holds with
$\alpha_1 = \cdots= \alpha_d = 2$ and there exists $\ttheta_0 \in
\mathring{\Theta}$ such that $s =   f_{\ttheta_0} \in\F$.
\item[(ii)]$\F$ and $\kappa$ do not depend on $n$. The $t_j$ depend on $n$
(one then write $t_j^{(n)}$ in place of $t_j$) and are chosen in such a
way that {$|\log t_j^{(n)}|/n$} tends to $0$ when $n$ goes to infinity.
\item[(iii)] For $\mu$-almost all $x \in\XX$, the mapping $\ttheta\mapsto
f_{\ttheta} (x) $ is positive and two times differentiable on
$\mathring
{\Theta}$.
\item[(iv)] The Fisher information matrix
\[
I(\ttheta) = \int_{\XX} \bigl(\dot{l}_{\ttheta} (x)
\bigr) \bigl(\dot {l}_{\ttheta} (x) \bigr)^T f_{\ttheta}
(x) \,\d\mu(x)
\]
exists for all $\ttheta\in\mathring{\Theta}$. Moreover, the map
$\ttheta\mapsto I(\ttheta)$ is continuous and non-singular at
$\ttheta_0$.
\item[(v)]
The integrals $\int_{\XX} \dot
{f}_{\ttheta
_0} (x) \,\d\mu(x)$, $\int_{\XX} \ddot{f}_{\ttheta_0} (x) \,\d\mu(x)$
exist and are zero.
\item[(vi)]
For all $\vartheta> 0$, there
exist a neighbourhood $\Theta_0 (\vartheta)$ of $\ttheta_0$
(independent of $n$) and an event $\mathcal{A}_n (\vartheta)$ on which
\begin{eqnarray*}
\sup_{\ttheta, \ttheta' \in\Theta_0 (\vartheta)} \frac{1}{n} \sum
_{i=1}^n \frac{\llvert  \log f_{\ttheta} (X_i) - \log f_{\ttheta'} (X_i)
\rrvert ^3}{\llVert \ttheta- \ttheta'\rrVert ^2} &\leq& \vartheta,
\\
\frac{1}{n} \sum_{i=1}^n \sup
_{\ttheta\in\Theta_0 (\vartheta)} \bigl\llVert \ddot{l}_{\ttheta}
(X_i) - \ddot{l}_{\ttheta_0} (X_i) \bigr\rrVert &
\leq & \vartheta.
\end{eqnarray*}
Moreover, the maps $\vartheta\mapsto\Theta_0 (\vartheta)$ and
$\vartheta\mapsto\mathcal{A}_n (\vartheta)$ are non-decreasing (in
the sense of set inclusion).
\end{longlist}
\end{hyp}

\begin{thmm}\label{thmGeneralLienAvecMLE}
Suppose that Assumption~\ref{AssumptionPourMLE} is fulfilled.
Let $\delta^2$ be a function satisfying the assumptions of
Theorem~\ref
{ThmPrincipal}.
Let, for each $n \in\N^{\star}$, $N_n$ be a (possibly random) positive
integer and $\Theta_1,\ldots,\Theta_{N_n}$ be (random) subsets
satisfying the assumptions of Theorem~\ref{ThmPrincipal}.
Let, for all $\vartheta> 0$, $\mathcal{A}_n' (\vartheta)$ be the
event on which
\begin{eqnarray*}
\Biggl\llVert \frac{1}{n} \sum_{i=1}^n
\dot{l}_{{\ttheta}_0} (X_i) \Biggr\rrVert \leq\vartheta\quad
\mbox{and} \quad\Biggl\llVert \frac{1}{n} \sum_{i=1}^n
\bigl( \ddot{l}_{\ttheta_0} (X_i) - \E \bigl[
\ddot{l}_{\ttheta_0} (X_i) \bigr] \bigr) \Biggr\rrVert \leq
\vartheta.
\end{eqnarray*}
Then there exist $\vartheta> 0$, $\xi> 0$, $n_0 \in\N^{\star}$ such
that for all $n \geq n_0$:
%
\begin{eqnarray}
\label{termeDroiteMLE} & & \P \Biggl[\exists\tilde{\ttheta} \in\mathring {\Theta }, \sum
_{i=1}^n \dot{l}_{\tilde{\ttheta}}
(X_i) = 0 \mbox{ and } \inf_{\ttheta\in\Theta_{N_n}}
\delta^2 (\ttheta, \tilde {\ttheta} ) \leq\frac{120 }{n} \sup
_{j \in\{1,\ldots,d\}} \bigl(t_j^{(n)}
\bigr)^{2} \Biggr]
\nonumber
\\[-8pt]
\\[-8pt]
\nonumber
&&\quad
\geq  1 - \bigl\{\P \bigl[ \bigl(\mathcal{A}_n (\vartheta)
\bigr)^c \bigr] + \P \bigl[ \bigl( \mathcal{A}_n'
(\vartheta) \bigr)^c \bigr] + \mathrm{e}^{-n
\xi} \bigr\}.
\end{eqnarray}
\end{thmm}

Remark that the law of large numbers implies that $\P [ \mathcal
{A}_n' (\vartheta)  ] $ converges to $1$ when $n$ goes to
infinity. The right-hand side of inequality (\ref{termeDroiteMLE})
tends therefore to $1$ as soon as $\P [ \mathcal{A}_n (\vartheta)
 ]$ converges to $1$. Moreover, under suitable assumptions, the
rate of convergence of $\P [ \mathcal{A}_n (\vartheta)  ]$
and $\P [ \mathcal{A}_n' (\vartheta)  ]$ to $1$ can be
specified as in Theorem~\ref{thmLienMLEDim1}.

\subsection{Proof of Theorem \texorpdfstring{\protect\ref{ThmPrincipal}}{5.1}}
Let $G\dvtx (1/\sqrt{2}, 1) \rightarrow(3 + 2\sqrt{2}, +\infty)$ be the
bijection defined by
\[
G (x) = \frac{ (1 + \min ( {(1 - x)}/{2}, x -
{1}/{\sqrt
{2}}  )  )^4 (1 + x) + \min ( {(1 - x)}/{2}, x -
{1}/{\sqrt{2}}  ) }{1 - x - \min ( {(1 - x)}/{2}, x
-{1}/{\sqrt{2}}  )}.
\]
Let $C_{\kappa}$ be such that $(1 + \sqrt{C_{\kappa}})^2 = \kappa
^{-1}$. Since $\kappa\in(0,\bar{\kappa})$, $C_{\kappa} \in(3 +
2\sqrt
{2}, +\infty)$ and there exists thus $\upsilon\in(1/\sqrt{2}, 1)$
such that $G (\upsilon) = C_{\kappa}$.
We then set
%
\begin{eqnarray} \label{defdecPreuveMLE}
c &=& 24 \bigl(2 + \sqrt{2}/6 (\upsilon- 1/\sqrt{2} ) \bigr) / (\upsilon- 1/
\sqrt{2} )^2 \cdot10^{3},\nonumber
\\
\beta_1 &=& \min \bigl\{ (1 - \upsilon)/{2}, \upsilon- 1/{\sqrt{2}}
\bigr\},
\nonumber
\\[-8pt]
\\[-8pt]
\nonumber
\beta_2 &=& (1+\beta_1) \bigl(1 + \beta_1^{-1}
\bigr) \bigl[1-\upsilon+ (1 +\beta _1) (1+\upsilon) \bigr],
\\
\beta_3 &=& \bigl(1+\beta_1^{-1}\bigr) \bigl[1
- \upsilon+ (1+\beta_1)^3 (1+\upsilon) \bigr] + c (1 +
\beta_1)^2.
\nonumber
\end{eqnarray}
We need the following claim, which will be proved immediately after the
present proof.

\begin{Claim} \label{ClaimOmegaXi}
For all $\xi> 0$, there exists an event $\Omega_{\xi}$ such that $\P
(\Omega_{\xi}) \geq1 - \mathrm{e}^{-n \xi}$ and on which, for all $f,f' \in
\Fd$,
\[
(1 - \upsilon ) h^2\bigl(s,f' \bigr) +
\frac{\overline{T} (f,f')
}{\sqrt{2}} \leq (1 + \upsilon ) h^2(s,f ) + c
\frac{
 (
D_{\F}^{(n)} + n \xi )}{n},
\]
where $D_{\F}^{(n)}$ is defined in Theorem~\ref{ThmPrincipal} for the
value of $c > 0$ given by (\ref{defdecPreuveMLE}).
\end{Claim}

We begin by proving the following lemma.

\begin{lemme} \label{LemmeControleH}
For all $\xi> 0$, the following assertion holds on $\Omega_{\xi}$:
if there exist $p \in\{1,\ldots,N-1\}$ and $\ell\in\{1,\ldots,L_p\}$
such that $\ttheta_0 \in\Theta_p$ and such that
%
\begin{eqnarray}
\label{eqControleH} \beta_2 h^2(s, f_{\ttheta_0}) +
\beta_3 \biggl(\frac{D_{\F
}^{(n)}}{n} + \xi \biggr) < \beta_1
\bigl(h^2(f_{\ttheta_0},f_{\ttheta^{(p,\ell
)}}) + h^2(f_{\ttheta_0},f_{\ttheta'^{(p,\ell)}})
\bigr),
\end{eqnarray}
then $\ttheta_0 \notin B^{(p,\ell)}$.
\end{lemme}

\begin{pf}
Without loss of generality, we may assume that $T ({\ttheta^{(p,\ell
)}},{\ttheta'^{(p,\ell)}}) = \overline{T}(f_{\pi(\ttheta^{(p,\ell
) })},
f_{\pi( \ttheta'^{(p,\ell)})} )$ is non-negative, and prove that
$\ttheta_0 \notin\B (\ttheta^{(p,\ell)}, r_{p,\ell}  )$.
On the event $\Omega_{\xi}$, we deduce from the claim that
\[
(1 - \upsilon ) h^2(s,f_{\pi(\ttheta'^{(p,\ell)})} ) \leq (1 + \upsilon )
h^2(s,f_{\pi(\ttheta^{(p,\ell)})} ) + c \frac
{  ( D_{\F}^{(n)} + n \xi )}{n}.
\]
Consequently, by using the triangular inequality and the above inequality
\begin{eqnarray*}
(1 - \upsilon ) h^2(f_{\ttheta_0},f_{\pi(\ttheta
'^{(p,\ell
)})} ) &\leq&
\bigl(1 + \beta_1^{-1} \bigr) (1 - \upsilon )
h^2(s,f_{\ttheta_{0}} )
\\
& &{} + (1 + \beta_1) (1 - \upsilon ) h^2(s,f_{\pi
(\ttheta'^{(p,\ell)})}
)
\\
&\leq& \bigl(1 + \beta_1^{-1} \bigr) (1 - \upsilon )
h^2(s,f_{\ttheta_{0}} )
\\
& &{} + (1 + \beta_1) \biggl[ (1 + \upsilon ) h^2(s,f_{\pi
(\ttheta^{(p,\ell)})}
) + c \frac{  ( D_{\F}^{(n)} + n \xi )}{n} \biggr].
\end{eqnarray*}
Since $h^2(s,f_{\pi(\ttheta^{(p,\ell)})} ) \leq(1+\beta_1^{-1})
h^2(s,f_{\ttheta_0}) + (1+\beta_1) h^2(f_{\ttheta_0},f_{\pi(\ttheta
^{(p,\ell)})})$,
%
\begin{eqnarray}
\label{eqPreuveThmPrincipalm} (1 - \upsilon ) h^2(f_{\ttheta_0},f_{\pi(\ttheta
'^{(p,\ell
)})}
)
 &\leq& \bigl(1 + \beta_1^{-1}\bigr) \bigl[1-\upsilon+ (1
+\beta_1) (1+\upsilon) \bigr] h^2(s, f_{\ttheta_0})\nonumber
\\
& &{} + (1+\beta_1)^2 (1 + \upsilon) h^2(f_{\ttheta_0},f_{\pi
(\ttheta^{(p,\ell)})})
\\
& &{} + \frac{c (1 + \beta_1)  ( D_{\F}^{(n)} + n \xi
)}{n}.
\nonumber
\end{eqnarray}
Remark now that for all $\ttheta\in\Theta$,
\[
h^2(f_{\ttheta},f_{\pi(\ttheta)} ) \leq\sup
_{1 \leq j \leq d} \overline {R}_j \varepsilon_j^{\alpha_j}
\leq d/n.
\]
By using the triangular inequality,
\begin{eqnarray*}
h^2(f_{\ttheta_0},f_{\pi(\ttheta^{(p,\ell)})} ) &\leq& (1+
\beta_1) h^2(f_{\ttheta_0},f_{\ttheta^{(p,\ell)}}) + d
\bigl(1 + \beta_1^{-1}\bigr)/n,
\\
h^2(f_{\ttheta_0},f_{\ttheta'^{(p,\ell)}} ) &\leq& (1+
\beta_1) h^2(f_{\ttheta_0},f_{\pi(\ttheta'^{(p,\ell)})}) + d
\bigl(1+\beta_1^{-1}\bigr)/n.
\end{eqnarray*}
We deduce from these two inequalities and from (\ref
{eqPreuveThmPrincipalm}) that
\begin{eqnarray*}
(1 - \upsilon ) h^2(f_{\ttheta_0},f_{\ttheta'^{(p,\ell
)}} ) &\leq&
\beta_2 h^2(s, f_{\ttheta_0}) + (1+\beta
_1)^4 (1 + \upsilon) h^2(f_{\ttheta_0},f_{\ttheta^{(p,\ell)}})
\\
& & {}+ \frac{d (1+\beta_1^{-1})  [1 -
\upsilon
+ (1+\beta_1)^3 (1+\upsilon)  ] + c (1 + \beta_1)^2  (
D_{\F
}^{(n)} + n \xi )}{n}.
\end{eqnarray*}
Since $D_{\F}^{(n)} \geq d$ and $\beta_3 \geq1$,
\begin{eqnarray*}
(1 - \upsilon ) h^2(f_{\ttheta_0},f_{\ttheta'^{(p,\ell
)}} ) &\leq&
\beta_2 h^2(s, f_{\ttheta_0}) + \frac{\beta_3  ( D_{\F}^{(n)}
+ n \xi )}{n}
\\
& &{} + (1+\beta_1)^4 (1 + \upsilon) h^2(f_{\ttheta
_0},f_{\ttheta
^{(p,\ell)}}).
\end{eqnarray*}
By using (\ref{eqControleH}),
\begin{eqnarray*}
(1 - \upsilon ) h^2(f_{\ttheta_0},f_{\ttheta'^{(p,\ell
)}} ) &<&
\beta_1 \bigl(h^2(f_{\ttheta_0},f_{\ttheta^{(p,\ell)}}) +
h^2(f_{\ttheta_0},f_{\ttheta'^{(p,\ell)}}) \bigr)
\\
& &{} + (1+\beta_1)^4 (1 + \upsilon) h^2(f_{\ttheta
_0},f_{\ttheta
^{(p,\ell)}})
\end{eqnarray*}
and thus
\begin{eqnarray*}
h^2(f_{\ttheta_0},f_{\ttheta'^{(p,\ell)}} ) &<& G (\upsilon)
h^2(f_{\ttheta_0},f_{\ttheta^{(p,\ell)}} )
\\
&<& C_{\kappa} h^2(f_{\ttheta_0},f_{\ttheta^{(p,\ell)}} ).
\end{eqnarray*}
Finally,
\begin{eqnarray*}
h^2(f_{\ttheta^{(p,\ell)}},f_{\ttheta'^{(p,\ell)}} ) &\leq&
\bigl(h(f_{\ttheta_0},f_{\ttheta^{(p,\ell)}} ) + h(f_{\ttheta
_0},f_{\ttheta
'^{(p,\ell)}}
) \bigr)^2
\\
&<& (1 + \sqrt{C_{\kappa}} )^2 h^2(f_{\ttheta
_0},f_{\ttheta
^{(p,\ell)}})
\\
&<& \kappa^{-1} h^2(f_{\ttheta_0},f_{\ttheta^{(p,\ell)}}),
\end{eqnarray*}
which leads to $\ttheta_0 \notin\B (\ttheta^{(p,\ell)},
r_{p,\ell
}  )$ as wished.
\end{pf}

Let us return to the proof of Theorem~\ref{ThmPrincipal}. Since the
result is straightforward when $\ttheta_0 \in\Theta_{N}$, we assume
that $\ttheta_0 \notin\Theta_N$. We then set
\[
p = \max \bigl\{i \in\{1,\ldots,N-1\}, \ttheta_0 \in\Theta
_i \bigr\}
\]
and consider any element $\ttheta_0'$ of $\Theta_N$. Then $\ttheta_0'$
belongs to $\Theta_p$ and
\begin{eqnarray*}
\delta^2 \bigl({\ttheta_0},{\ttheta_0'}
\bigr) &\leq& \sup_{\ttheta, \ttheta
' \in
\Theta_{p}} \delta^2 \bigl({\ttheta},{
\ttheta'}\bigr)
\\
&\leq& \inf_{\ell\in\{1,\ldots,L_p\}} h^2(f_{\ttheta^{(p,\ell
)}},f_{\ttheta'^{(p,\ell)}})
\\
&\leq& 2 \inf_{\ell\in\{1,\ldots,L_p\}} \bigl(h^2(f_{\ttheta
_0},f_{\ttheta^{(p,\ell)}})
+ h^2(f_{\ttheta_0},f_{\ttheta
'^{(p,\ell
)}}) \bigr).
\end{eqnarray*}
By the definition of $p$, $\ttheta_0 \in\Theta_p \setminus\Theta
_{p+1}$. We then derive from the above lemma that on $\Omega_{\xi}$,
\begin{eqnarray*}
\beta_1 \inf_{\ell\in\{1,\ldots,L_p\}} \bigl(h^2(f_{\ttheta
_0},f_{\ttheta^{(p,\ell)}})
+ h^2(f_{\ttheta_0},f_{\ttheta
'^{(p,\ell
)}}) \bigr) \leq
\beta_2 h^2(s, f_{\ttheta_0}) + \beta_3
\frac
{D_{\F
}^{(n)} + n \xi}{n}.
\end{eqnarray*}
Hence,
\[
\delta^2 \bigl({\ttheta_0},{\ttheta_0'}
\bigr) \leq\frac{2}{ \beta_1 } \biggl( \beta_2 h^2(s,
f_{\ttheta_0}) + \beta_3\frac{D_{\F}^{(n)} + n \xi}{n} \biggr).
\]
Since $ h^2(s, f_{\ttheta_0}) \leq h^2(s, \F) + 1/n$, there exists $C
> 0$ depending only on $\kappa$ such that
\[
C \delta^2 \bigl(\ttheta_0,\ttheta_0'
\bigr) \leq h^2(s, \F) + \frac{D_{\F}^{(n)}
}{n} + \xi \qquad\mbox{on $
\Omega_{\xi}$.}
\]
This concludes the proof of the theorem.

It remains to prove Claim~\ref{ClaimOmegaXi}. It actually derives from
the work of Baraud \cite{BaraudMesure}. More precisely, Proposition~2
of Baraud \cite{BaraudMesure} says that for all $f,f' \in\Fd$,
\[
\biggl(1 - \frac{1}{\sqrt{2}} \biggr) h^2\bigl(s,f'
\bigr) + \frac{\overline{T}
(f,f')}{\sqrt{2}} \leq \biggl(1 + \frac{1}{\sqrt{2}} \biggr)
h^2(s,f ) + \frac{\overline{T}(f,f') - \E [\overline{T}(f,f')
]}{\sqrt{2}}.
\]
Let $z = \upsilon- 1/\sqrt{2} \in(0, 1-1/\sqrt{2})$. We define
$\Omega
_{\xi}$ by
\[
\Omega_{\xi} = \bigcap_{f,f' \in\Fd} \biggl[
\frac{\overline{T}(f,f')
- \E [\overline{T}(f,f') ]}{ z  (h^2(s,f ) + h^2(s,f' )
 ) + c (D_{\F}^{(n)} + n \xi)/n} \leq\sqrt{2} \biggr].
\]
On this event,
\[
(1 - \upsilon ) h^2\bigl(s,f'\bigr) +
\frac{\overline{T}
(f,f')}{\sqrt
{2}} \leq (1 + \upsilon ) h^2(s,f ) + c
\frac{D_{\F
}^{(n)} +
n \xi}{n}
\]
and the inequality $\P(\Omega_{\xi}^c) \leq \mathrm{e}^{- n \xi}$ will follow
from Lemma~1 of Baraud \cite{BaraudMesure}. Before applying this lemma,
we need to check that his Assumption~3 is fulfilled. This is the
purpose of the claim below.

\begin{Claim} \label{ClaimDimMetriqueF2}
Let
\begin{eqnarray*}
\tau&=& 4 \frac{2 + ({n \sqrt{2}}/{6} )z}{({n^2}/{6}) z^2},
\\
\eta^2_{\F} &=& \max \Biggl\{3 d \mathrm{e}^{4}, \sum
_{j=1}^d \log \bigl(1 + 2
t_j^{-1} \bigl( (d/ \bar{\bolds{\alpha}}) \bigl(c
\overline{R}_j / \underline{R}_j^{(n)}\bigr)
\bigr)^{1/\alpha_j} \bigr) \Biggr\}.
\end{eqnarray*}
Then, for all $r \geq2 \eta_{\F}$,
%
\begin{equation}
\label{EqControlDimensioNFdis2} \bigl\llvert \Fd\cap\B_h (s, r \sqrt{\tau} )\bigr
\rrvert \leq \exp \bigl(r^2 / 2\bigr),
\end{equation}
where $\B_h(s, r \sqrt{\tau})$ is the Hellinger ball centered at $s$
with radius $r \sqrt{\tau}$ defined by
\[
\B_h (s, r \sqrt{\tau}) = \bigl\{f \in\L^1_+ (\XX,
\mu), h^2 (s,f) \leq r^2 \tau \bigr\}.
\]
\end{Claim}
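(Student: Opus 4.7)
My approach is a direct counting argument, using Assumption~\ref{HypSurLeModeleQuelquonqueDebutDimD} to transfer the Hellinger ball $\B_h(s,r\sqrt{\tau})$ into a parameter-space box and then counting the grid points of $\Theta_{\text{dis}}$ it contains. If the intersection $\Fd\cap\B_h(s,r\sqrt{\tau})$ is empty there is nothing to prove, so I would fix a reference $\ttheta^\star\in\Theta_{\text{dis}}$ with $f_{\ttheta^\star}$ in this ball. Any other $f_\ttheta\in\Fd\cap\B_h(s,r\sqrt{\tau})$ would then satisfy $h^2(f_{\ttheta^\star},f_\ttheta)\le 4r^2\tau$ by the triangle inequality, hence by the lower bound in Assumption~\ref{HypSurLeModeleQuelquonqueDebutDimD}
$$|\theta_j-\theta_j^\star|\le (4r^2\tau/\underline{R}_j)^{1/\alpha_j}\qquad\text{for every }j\in\{1,\dots,d\}.$$
Since the $j$-th coordinate of $\Theta_{\text{dis}}$ has step $\epsilon_j=t_j(\widebar{R}_j n)^{-1/\alpha_j}$, the number of admissible values for $\theta_j$ is at most $1+2t_j^{-1}(4r^2\tau n\widebar{R}_j/\underline{R}_j)^{1/\alpha_j}$, and taking the product over $j$ yields
$$|\Fd\cap\B_h(s,r\sqrt{\tau})|\le\prod_{j=1}^d\bigl(1+2t_j^{-1}(4r^2\tau n\widebar{R}_j/\underline{R}_j)^{1/\alpha_j}\bigr).$$

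Taking logarithms and applying the elementary inequality $\log(1+a\rho^\beta)\le\log(1+a)+\beta\log\rho$, valid for $\rho\ge 1$ and $a,\beta\ge 0$ (one may assume $r\ge 2\eta_{\F}\ge 1$), the right-hand side is bounded by $S+(2d/\boldsymbol{\bar\alpha})\log r$ with
$$S=\sum_{j=1}^d\log\bigl(1+2t_j^{-1}(4\tau n\widebar{R}_j/\underline{R}_j)^{1/\alpha_j}\bigr).$$
The key observation is that the explicit forms of $\tau$ and $c$ are calibrated so that $4\tau n\le(d/\boldsymbol{\bar\alpha})c$, and therefore $S\le\eta_{\F}^2$ by the second term in the maximum defining $\eta_{\F}^2$. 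Combined with $r\ge 2\eta_{\F}$, this gives $S\le r^2/4$. It then remains to check that $(2d/\boldsymbol{\bar\alpha})\log r\le r^2/4$, which is exactly why the lower bound $\eta_{\F}^2\ge 3de^4$ has been inserted: from $r^2\ge 12de^4$ and an elementary analysis of the function $\rho\mapsto \rho/\log \rho$, one absorbs the factor $d/\boldsymbol{\bar\alpha}$. Summing both bounds yields $r^2/2$ as required, so $|\Fd\cap\B_h(s,r\sqrt{\tau})|\le\exp(r^2/2)$.

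The main obstacle in making this plan rigorous is the numerical bookkeeping needed to verify the two inequalities $4\tau n\le(d/\boldsymbol{\bar\alpha})c$ and $(2d/\boldsymbol{\bar\alpha})\log r\le r^2/4$ uniformly in $n$, $d$, and $\boldsymbol{\bar\alpha}$. The generous constant $10^3$ built into $c$ and the term $3de^4$ in $\eta_{\F}^2$ are clearly there to provide the slack needed to make both bounds fit comfortably into $r^2/2$, but one must track how $d/\boldsymbol{\bar\alpha}=\sum_j 1/\alpha_j$ enters each factor so that nothing blows up in the regime where the $\alpha_j$ are small.
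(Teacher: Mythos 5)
Your counting argument is the same as the paper's: recenter the Hellinger ball at a grid point $\ttheta^\star$, use the lower bound of Assumption~\ref{HypSurLeModeleQuelquonqueDebutDimD} to turn it into a box in parameter space, and count grid points coordinatewise to reach exactly the product $\prod_{j=1}^d\bigl(1+2t_j^{-1}(4r^2\tau n\widebar{R}_j/\underline{R}_j)^{1/\alpha_j}\bigr)$. The divergence, and the gap, lies in how the factor $d/\boldsymbol{\bar{\alpha}}$ is absorbed. First, the constants are calibrated so that $4\tau n\le c$ (in fact $4\tau n\le c/250$), not so that $4\tau n\le(d/\boldsymbol{\bar{\alpha}})c$; the latter fails outright when $d/\boldsymbol{\bar{\alpha}}=\sum_j1/\alpha_j<1/250$, so the extra factor $d/\boldsymbol{\bar{\alpha}}$ sitting inside the definition of $\eta_{\F}^2$ cannot simply be charged to the constants. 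The paper borrows it from $r$ instead: it writes $r^2c=\bigl(r^2/(d/\boldsymbol{\bar{\alpha}})\bigr)\cdot(d/\boldsymbol{\bar{\alpha}})c$ and pulls the factor $\bigl(r^2/(d/\boldsymbol{\bar{\alpha}})\bigr)^{1/\alpha_j}\ge1$ out of each term, so that the $r$-dependent contribution is $(d/\boldsymbol{\bar{\alpha}})\log\bigl(r^2/(d/\boldsymbol{\bar{\alpha}})\bigr)$ rather than your $(d/\boldsymbol{\bar{\alpha}})\log(r^2)$.

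Second, and more seriously, your closing step $(2d/\boldsymbol{\bar{\alpha}})\log r\le r^2/4$ cannot be deduced from $r^2\ge 12de^4$ alone: that lower bound on $r^2$ does not involve $\boldsymbol{\bar{\alpha}}$, so in the regime where the $\alpha_j$ are small (hence $d/\boldsymbol{\bar{\alpha}}$ large) the left-hand side grows without any compensating growth of $r^2$, and the inequality fails at $r=2\eta_{\F}$. The missing ingredient is a second, $\boldsymbol{\bar{\alpha}}$-dependent lower bound on $\eta_{\F}^2$ extracted from the logarithmic sum itself: when $\boldsymbol{\bar{\alpha}}\le e^{-4}$, using $t_j^{-1}\ge d^{-1/\alpha_j}$, $c\ge1$ and $\widebar{R}_j\ge\underline{R}_j$, each summand is at least $(1/\alpha_j)\log(1/\boldsymbol{\bar{\alpha}})\ge 4/\alpha_j$, giving $\eta_{\F}^2\ge 4d/\boldsymbol{\bar{\alpha}}$; combined with $\eta_{\F}^2\ge3de^4\ge3d/\boldsymbol{\bar{\alpha}}$ in the complementary case, this yields $r^2/(d/\boldsymbol{\bar{\alpha}})\ge10$, and then $\log x/x\le(\log 10)/10<1/4$ at $x=r^2/(d/\boldsymbol{\bar{\alpha}})$ closes the argument. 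With that extra lower bound in hand, your additive version of the bookkeeping can be repaired; without it, the absorption step is a genuine gap.
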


\begin{pf}
If $\Fd\cap\B_h(s, r \sqrt{\tau}) = \varnothing$, (\ref
{EqControlDimensioNFdis2}) holds. In the contrary case, there exists
$\ttheta_0' = (\theta_{0,1}',\ldots, \theta_{0,d}') \in\Theta
_{\mathrm
{dis}}$ such that $h^2(s,f_{\ttheta_0'}) \leq r^2 \tau$, and thus
\[
\bigl|\Fd\cap\B_h (s, r \sqrt{\tau}) \bigr| \leq\bigl|\Fd\cap\B_h
(f_{\ttheta
_0'}, 2 r \sqrt{\tau})\bigr|.
\]
First of all, suppose that $r$ satisfies
%
\begin{equation}
\label{eqDansPreuveSurRTau} 4 r^2 \tau\leq\frac{c}{n} \sum
_{j=1}^d \log \bigl(1 + t_j^{-1}
(M_j-m_j) (\overline{R}_j n
)^{1/\alpha_j} \bigr).
\end{equation}
Then
\begin{eqnarray*}
\bigl|\Fd\cap\B_h (f_{\ttheta_0'}, 2 r \sqrt{\tau})\bigr| &=& \bigl
\llvert \bigl\{ f_{\ttheta}, \ttheta\in\Theta_{\mathrm{dis}},
h^2(f_{\ttheta}, f_{\ttheta_0'}) \leq4 r^2\tau
\bigr\} \bigr\rrvert
\\
&\leq& \bigl\llvert \bigl\{\ttheta\in\Theta_{\mathrm{dis}}, \forall j \in\{ 1,
\ldots,d\}, \underline{R}_j^{(n)} \bigl|\theta_j -
\theta_{0,j}' \bigr|^{\alpha_j} \leq4 r^2 \tau
\bigr\} \bigr\rrvert .
\end{eqnarray*}
Let $k_{0,j} \in\N$ be such that $\theta_{0,j}' = m_j + k_{0,j}
\varepsilon_j$. Then
\begin{eqnarray*}
\bigl|\Fd\cap\B_h (f_{\ttheta_0'}, 2 r \sqrt{\tau})\bigr| &\leq& \prod
_{j=1}^d \bigl\llvert \bigl
\{k_j \in\N, |k_j - k_{0,j} | \leq \bigl( {4
r^2 \tau}/{ \underline{R}_j ^{(n)}}
\bigr)^{1/\alpha_j} \varepsilon_j^{-1} \bigr\} \bigr
\rrvert
\\
&\leq& \prod_{j=1}^d \bigl(1 + 2
\varepsilon_j^{-1} \bigl( {4 r^2 \tau }/{
\underline{R}_j^{(n)} } \bigr)^{1/\alpha_j} \bigr).
\end{eqnarray*}
It is worthwhile to notice that $10^3\tau\leq c / n$. In particular,
by using the weaker inequality $4 \tau\leq c/n$ and $\varepsilon_j =
t_j (\overline{R}_j n)^{-1/\alpha_j}$,
\[
\bigl|\Fd\cap\B_h (f_{\ttheta_0'}, 2 r \sqrt{\tau})\bigr| \leq\prod
_{j=1}^d \bigl(1 + 2 t_j^{-1}
\bigl(r^2 c \overline{R}_j/{\underline{R}_j^{(n)}}
\bigr)^{1/\alpha_j} \bigr).
\]
If $\bar{\bolds{\alpha}} \leq \mathrm{e}^{-4}$, one can check that $\eta
_{\F
}^2 \geq4 d/ \bar{\bolds{\alpha}}$ (since $c \geq1$ and
$t_j^{-1} \geq d^{-1/\alpha_j}$).
If now $\bar{\bolds{\alpha}} \geq \mathrm{e}^{-4}$, then $\eta_{\F}^2
\geq
3 d \mathrm{e}^{4}
\geq3 d/ \bar{\bolds{\alpha}}$.
In particular, we always have $r^2 \geq10  ( d/ \bar{\bolds{\alpha}} )$.

We derive from the weaker inequality $r^2 \geq d/ \bar{\bolds{\alpha}}$ that
\begin{eqnarray*}
\bigl|\Fd\cap\B_h (f_{\ttheta_0'}, 2 r \sqrt{\tau})\bigr| &\leq& \biggl(
\frac
{r^2}{ d/ \bar{\bolds{\alpha}} } \biggr) ^{d/ \bar{\bolds{\alpha}}} \prod_{j=1}^d
\bigl(1 + 2 t_j^{-1} \bigl( ( d/ \bar{\bolds{\alpha}}) \bigl(c \overline{R}_j / \underline{R}_j^{(n)}
\bigr) \bigr)^{1/\alpha_j} \bigr)
\\
&\leq& \exp \biggl( \frac{\log (r^2/( d/ \bar{\bolds{\alpha}})
 )}{r^2/( d/ \bar{\bolds{\alpha}})} r^2 \biggr) \exp \bigl(\eta
_{\F}^2 \bigr).
\end{eqnarray*}
We then deduce from the inequalities $r^2/(d/ \bar{\bolds{\alpha}}) \geq10$ and $\eta_{\F}^2 \leq r^2/4$ that
\begin{eqnarray*}
\bigl|\Fd\cap\B_h (f_{\ttheta_0'}, 2 r \sqrt{\tau})\bigr| \leq\exp \bigl(
r^2/4 \bigr) \exp \bigl( r^2/4 \bigr) \leq\exp
\bigl(r^2/2\bigr)
\end{eqnarray*}
as wished. It remains to show that this inequality remains true when
(\ref{eqDansPreuveSurRTau}) does not hold. In this case,
\begin{eqnarray*}
\bigl|\Fd\cap\B_h (f_{\ttheta_0'}, 2 r \sqrt{\tau})\bigr| \leq| \Theta
_{\mathrm{dis}}| \leq\prod_{j=1}^d
\biggl(1 + \frac{M_j-m_j}{\varepsilon_j} \biggr) \leq \mathrm{e}^{4 r^2 \tau n / c}.
\end{eqnarray*}
The result follows from the inequality $4 \tau n / c \leq1/2$.
\end{pf}

We can now use Lemma~1 of Baraud \cite{BaraudMesure} to get for all
$\xi> 0$ and $y^2 \geq\tau ( 4 \eta^2_{\F} + n \xi )$,
\[
\P \biggl[ \sup_{f,f' \in\Fd} \frac{  (\overline{T}(f,f') -
\E
 [\overline{T}(f,f') ]  )/ \sqrt{2}}{ ( h^2(s,f
) +
h^2(s,f' )  ) \vee y^2} \geq z \biggr] \leq
\mathrm{e}^{- n \xi}.
\]
Since $4 \eta_{\F}^2 \leq10^{3} D_{\F}^{(n)}$ and $10^{3} \tau\leq c
/n$, we can choose
$y^2 = c  (D_{\F}^{(n)} + n \xi )/n$,
which concludes the proof of Claim~\ref{ClaimOmegaXi}. 

\subsection{Proof of Theorem \texorpdfstring{\protect\ref{thmGeneralLienAvecMLE}}{5.2}}
All along this proof, we set $\mathcal{C}_n (\vartheta) = \mathcal{A}_n
(\vartheta) \cap\mathcal{A}_n' (\vartheta)$ for all $\vartheta> 0$
and we denote by $\lambda_0$ the minimum between the smallest
eigenvalue of $I(\ttheta_0)$ and $1$. Since $I(\ttheta_0)$ is
invertible, $\lambda_0 \in(0,1]$. 

\begin{Claim} \label{claimExistenceMLE}
For all $r > 0$, there exists $\vartheta> 0$ such that, on the event
$\mathcal{C}_n (\vartheta)$, there exists a solution $\tilde{\ttheta}
\in\mathring{\Theta}$ of the likelihood equation
\[
\frac{1}{n} \sum_{i=1}^n
\dot{l}_{\tilde{\ttheta}} (X_i) = 0
\]
satisfying $\|\tilde{\ttheta} - \ttheta_0\| \leq r$.
\end{Claim}

\begin{pf}
The proof of this claim follows from classical arguments that can be
found in the literature. We make them explicit for the sake of completeness.
There exists a neighbourhood $\Theta_0 (\lambda_0/8)$ of $\ttheta_0$,
such that on $\mathcal{A}_n (\lambda_0/8) \cap\mathcal{A}_n'
(\lambda_0/8)$,
\begin{eqnarray*}
\frac{1}{n} \sum_{i=1}^n
\sup_{\ttheta\in\Theta_0 (\lambda_0/8)} \bigl\llVert \ddot{l}_{\ttheta}
(X_i) - \ddot{l}_{\ttheta_0} (X_i) \bigr\rrVert &
\leq & \frac{\lambda_0}{8},
\\
\Biggl\llVert \frac{1}{n} \sum_{i=1}^n
\bigl( \ddot{l}_{\ttheta_0} (X_i) - \E \bigl[
\ddot{l}_{\ttheta_0} (X_i) \bigr] \bigr) \Biggr\rrVert &\leq&
\frac
{\lambda_0}{8}.
\end{eqnarray*}
Without lost of generality, we may assume that $r$ is small enough so
that $r \leq1$ and that the ball $ \{\ttheta\in\Theta,   \|
\ttheta- \ttheta_0 \| \leq r  \}$ is a subset of $\Theta_0
(\lambda_0/8)$.
Let $\mathcal{S}_r$ be the $d$-sphere $\mathcal{S}_r =  \{
\ttheta
\in\Theta,   \|\ttheta- \ttheta_0 \| = r  \}$ and $\vartheta=
\lambda_0 r / 8$. Then, on $\mathcal{A}_n' (\vartheta)$,
\[
\Biggl\llVert \frac{1}{n} \sum
_{i=1}^n \dot{l}_{{\ttheta}_0}
(X_i) \Biggr\rrVert \leq\frac{r \lambda_0}{8}.
\]
We now use Taylor's theorem to show that for all $\ttheta\in\mathcal
{S}_r$, and $\mu$-almost all $x \in\XX$, there exists $\ttheta_x
\in
\Theta_0 (\lambda_0 / 8)$ such that
\[
l_{\ttheta} (x) = l_{{\ttheta_0}} (x) + \bigl(\dot{l}_{{\ttheta_0}}
(x) \bigr)^T (\ttheta- {\ttheta}_0) +
\tfrac{1}{2} (\ttheta- {\ttheta }_0)^T
\ddot{l}_{\ttheta_x} (x) (\ttheta- {\ttheta}_0).
\]
In particular, for all $\ttheta\in\mathcal{S}_r$,
\begin{eqnarray*}
&&\biggl\llvert L (\ttheta) - L (\ttheta_0) + \frac{1}{2} (
\ttheta- {\ttheta }_0)^T I (\ttheta_0) (
\ttheta- {\ttheta}_0) \biggr\rrvert \\
&&\quad\leq r \Biggl\llVert
\frac{1}{n} \sum_{i=1}^n
\dot{l}_{{\ttheta}_0} (X_i) \Biggr\rrVert + r^2 \Biggl
\llVert \frac{1}{2 n} \sum_{i=1}^n
\bigl(\ddot {l}_{\ttheta_{X_i}} (X_i) + I (\ttheta_0)
\bigr) \Biggr\rrVert
\\
&&\quad\leq r \Biggl\llVert \frac{1}{n} \sum_{i=1}^n
\dot {l}_{{\ttheta}_0} (X_i) \Biggr\rrVert + \frac{ r^2}{2 n}
\sum_{i=1}^n \sup_{\ttheta' \in\Theta_0 (\lambda_0/8)}
\bigl\llVert \ddot{l}_{\ttheta'} (X_i) - \ddot{l}_{\ttheta_{0}}
(X_i) \bigr\rrVert
\\
& &\quad\quad{} + r^2 \Biggl\llVert \frac{1}{2 n} \sum
_{i=1}^n \bigl(\ddot {l}_{\ttheta
_{0}}
(X_i) + I (\ttheta_0) \bigr) \Biggr\rrVert .
\end{eqnarray*}
Now, remark that
\[
f_{\ttheta_0} (x) \ddot{l}_{\ttheta_0} (x) = \ddot{f}_{\ttheta
_0}(x)
- \bigl(\dot{l}_{\ttheta_0}(x)\bigr) \bigl(\dot{l}_{\ttheta_0}(x)
\bigr)^T f_{\ttheta_0} (x),
\]
which, together with point (v) of Assumption~\ref
{AssumptionPourMLE}, yields $I(\ttheta_0) = - \E[ \ddot{l}_{\ttheta
_0} (X_1) ]$. Therefore, on the event $\mathcal{C}_n (\vartheta)
\subset\mathcal{A}_n (\lambda_0/8) \cap\mathcal{A}_n' (\vartheta)$,
\[
\biggl\llvert L (\ttheta) - L (\ttheta_0) + \frac{1}{2} (
\ttheta- {\ttheta }_0)^T I (\ttheta_0) (
\ttheta- {\ttheta}_0) \biggr\rrvert \leq\frac{
\lambda
_0 r^2}{4},
\]
which implies
\begin{eqnarray*}
L (\ttheta) - L (\ttheta_0) &\leq& \frac{\lambda_0 r^2}{4} -
\frac
{1}{2} (\ttheta- {\ttheta}_0)^T I (
\ttheta_0) (\ttheta- {\ttheta}_0)
\\
&\leq& \frac{\lambda_0 r^2}{4} - \frac{\lambda_0 r^2}{2} < 0.
\end{eqnarray*}
This means that for all $\ttheta\in\mathcal{S}_r$, $L(\ttheta) <
L(\ttheta_0)$ on the event $\mathcal{C}_n (\vartheta)$. In particular,
this proves that there exists $\tilde{\ttheta}$ in the ball $\{
\ttheta
\in\Theta,   \|\ttheta-\ttheta_0\| < r\}$ such that $\dot{L}
(\tilde
{\ttheta}) = 0$.
\end{pf}

\begin{Claim} \label{ClaimEncadrementHellinger}
For all $\tau\in(0,1)$, there exists a neighbourhood $\Theta_1 (\tau
)$ of $\ttheta_0$ such that for all $\ttheta, \ttheta' \in\Theta_1
(\tau)$,
\[
\frac{1-\tau}{8} \bigl(\ttheta- \ttheta'\bigr)^T I
(\ttheta_0) \bigl(\ttheta- \ttheta '\bigr) \leq
h^2 (f_{\ttheta}, f_{\ttheta'}) \leq\frac{1 + \tau}{8}
\bigl(\ttheta - \ttheta'\bigr)^T I (
\ttheta_0) \bigl(\ttheta- \ttheta'\bigr).
\]
\end{Claim}

The proof of this claim is omitted since it is very similar to the one
of Lemma~1.A of Section~31 of Borovkov \cite{borovkov1998}.

\begin{Claim} \label{ClaimComparaisonVraisemblancePolynome}
For all $\tau\in(0,1)$ and for all $r > 0$, there exist a
neighbourhood $\Theta_2 (\tau)$ of {$\ttheta_0$} (that does not depend
on $r$) and $\vartheta> 0$ such that on $\mathcal{C}_n (\vartheta)$:
\begin{itemize}
\item There exists a solution $\tilde{\ttheta} \in\Theta_2 (\tau)$ of
the likelihood equation satisfying $\|\tilde{\ttheta} - \ttheta_0 \|
\leq r$.
\item For all $ \ttheta\in\Theta_2 (\tau)$,
\[
\bigl\llvert L (\tilde{\ttheta}) - L (\ttheta) - \tfrac{1}{2} (\ttheta-
\tilde {\ttheta})^T I(\ttheta_0) (\ttheta- \tilde{
\ttheta}) \bigr\rrvert < \tau (\ttheta- \tilde{\ttheta})^T I (
\ttheta_0) (\ttheta- \tilde {\ttheta}).
\]
\end{itemize}
\end{Claim}

\begin{pf}
Let $\Theta_2 (\tau) $ be a convex neighbourhood of $\ttheta_0$
included in $\Theta_0 (\tau\lambda_0 )$. Without lost of generality,
we can assume that $r$ is small enough to ensure that the ball $\{
\ttheta\in\Theta,   \|\ttheta- \ttheta_0\| \leq r \}$ is included
in $\Theta_2 (\tau)$.

Thanks to Claim~\ref{claimExistenceMLE}, there exist a positive number
$\vartheta_0$ and a solution $\tilde{\ttheta} \in\Theta$ of the
likelihood equation satisfying $\|\tilde{\ttheta} - \ttheta_0 \|\leq r$
on $\mathcal{C}_n (\vartheta_0)$. In particular $\tilde{\ttheta}
\in
\Theta_2 (\tau)$. We then use Taylor's theorem to show that for all
$\ttheta\in\Theta_2 (\tau) $ and $\mu$-almost all $x \in\XX$, there
exists $\ttheta_x \in\Theta_2 (\tau) $ such that
\[
l_{\ttheta} (x) = l_{\tilde{\ttheta}} (x) + \bigl(\dot{l}_{\tilde
{\ttheta
}}
(x) \bigr)^T (\ttheta- \tilde{\ttheta}) + \tfrac{1}{2} (
\ttheta- \tilde{\ttheta})^T \ddot{l}_{\ttheta_x} (x) (\ttheta-
\tilde {\ttheta}).
\]
Therefore,
\begin{eqnarray*}
&&l_{\ttheta} (x) - l_{\tilde{\ttheta}} (x) - \bigl(\dot{l}_{\tilde
{\ttheta
}}
(x) \bigr)^T (\ttheta- \tilde{\ttheta}) + \tfrac{1}{2} (
\ttheta- \tilde{\ttheta})^T I (\ttheta_0) (\ttheta-
\tilde{\ttheta})\\
&&\quad = \tfrac
{1}{2} (\ttheta- \tilde{\ttheta})^T
\bigl(\ddot{l}_{\ttheta_x} (x) + I (\ttheta_0) \bigr) (\ttheta-
\tilde{\ttheta}).
\end{eqnarray*}
We derive that
\begin{eqnarray*}
\biggl\llvert L (\ttheta) - L (\tilde{\ttheta}) + \frac{1}{2} (\ttheta-
\tilde {\ttheta})^T I (\ttheta_0) (\ttheta- \tilde{
\ttheta}) \biggr\rrvert &=& \Biggl\llvert (\ttheta- \tilde{\ttheta})^T
\Biggl( \frac
{1}{2 n} \sum_{i=1}^n
\bigl( \ddot{l}_{\ttheta_{X_i}} (X_i) + I (\ttheta _0)
\bigr) \Biggr) (\ttheta- \tilde{\ttheta}) \Biggr\rrvert
\\
&\leq& \Biggl(\frac{1}{2 n} \sum_{i=1}^n
\sup_{\ttheta' \in\Theta_0 (\tau\lambda_0)} \bigl\llVert \ddot{l}_{\ttheta'}
(X_i) - \ddot{l}_{\ttheta_0} (X_i) \bigr\rrVert
\Biggr) \llVert \ttheta- \tilde{\ttheta}\rrVert ^2
\\
& &{} + \Biggl\llVert \frac{1}{2 n} \sum_{i=1}^n
\bigl(\ddot {l}_{\ttheta
_0} (X_i) + I (\ttheta_0)
\bigr) \Biggr\rrVert \llVert \ttheta- \tilde {\ttheta} \rrVert ^2.
\end{eqnarray*}
We now set $\vartheta= \min(\vartheta_0, \tau\lambda_0)$ so that
$\mathcal{C}_n (\vartheta) \subset\mathcal{C}_n (\vartheta_0) \cap
\mathcal{C}_n (\tau\lambda_0 )$. On the event $\mathcal{C}_n
(\vartheta
)$, we thus have for all $\ttheta\in\Theta_2 (\tau) $,
\begin{eqnarray*}
\bigl\llvert L (\ttheta) - L (\tilde{\ttheta}) + \tfrac{1}{2} (\ttheta-
\tilde {\ttheta})^T I (\ttheta_0) (\ttheta- \tilde{
\ttheta}) \bigr\rrvert &\leq& \tau\lambda_0 \llVert \ttheta-
\tilde{\ttheta}\rrVert ^2
\\
&\leq& \tau(\ttheta- \tilde{\ttheta})^T I (\ttheta_0)
(\ttheta- \tilde{\ttheta}).
\end{eqnarray*}
This completes the proof.
\end{pf}

\begin{Claim} \label{dernierClaimMLE}
For all $\tau\in(0,1)$ and $r > 0$, there exist a neighbourhood
$\Theta_3 (\tau)$ of {$\ttheta_0$} (that does not depend on $r$) and
$\vartheta> 0$ such that on $\mathcal{C}_n (\vartheta)$:
\begin{itemize}
\item There exists a solution of the likelihood equation $\tilde
{\ttheta
} \in\Theta$ satisfying $\|\tilde{\ttheta} - \ttheta_0 \|\leq r$.
\item For all $\ttheta, \ttheta' \in\Theta_3 (\tau)$,
%
\begin{equation}
\label{ComparaisonTestHellinger} \overline{T} (f_{\ttheta}, f_{\ttheta'}) \leq \biggl(
\frac{8 +
5\sqrt
{2}}{7} + \tau \biggr) h^2(f_{\tilde{\ttheta}},
f_{\ttheta}) - (8 - 5 \sqrt{2} - \tau ) h^2(f_{\tilde{\ttheta}},
f_{\ttheta'}).
\end{equation}
\end{itemize}
\end{Claim}

\begin{pf}
We introduce the function $F$ defined on $(0,+\infty)$ by $F(x) =
(\sqrt
{x}-1)/\sqrt{1+x}$ and define for all $\ttheta, \ttheta' \in\Theta$,
\begin{eqnarray*}
\overline{T}_1 (f_{\ttheta}, f_{\ttheta'}) &=&
\frac{1}{2 } \int_{\XX
} \sqrt{f_{\ttheta} (x) +
f_{\ttheta'}(x)} \bigl(\sqrt{f_{\ttheta'} (x)} - \sqrt{f_{\ttheta}
(x)} \bigr) \,\d\mu(x),
\\
\overline{T}_2 (f_{\ttheta}, f_{\ttheta'}) &=&
\frac{1}{n} \sum_{i=1}^n F \biggl({
\frac{f_{\ttheta'} (X_i) }{f_{\ttheta}
(X_i)}} \biggr).
\end{eqnarray*}
Remark that for all $\ttheta, \ttheta' \in\Theta$,
%
\begin{equation}
\label{decomTest} \overline{T} ( f_{\ttheta}, f_{\ttheta'}) =
\overline{T}_1 (f_{\ttheta
}, f_{\ttheta'}) +
\overline{T}_2 (f_{\ttheta}, f_{\ttheta'}).
\end{equation}
We begin by bounding $\overline{T}_1 (f_{\ttheta}, f_{\ttheta'})$ from
above. Since $f_{\ttheta}$ and $f_{\ttheta'}$ are two densities,
$\overline{T}_1 (f_{\ttheta}, f_{\ttheta'})$ is also equal to
\begin{eqnarray*}
\overline{T}_1 (f_{\ttheta}, f_{\ttheta'}) =
\frac{1}{2} \int_{\XX} \biggl(\sqrt{f_{\ttheta}
(x) + f_{\ttheta'}(x)} - \frac{\sqrt
{f_{\ttheta
}(x)} + \sqrt{f_{\ttheta'} (x)} }{\sqrt{2}} \biggr) \bigl(\sqrt
{f_{\ttheta'} (x)} - \sqrt{f_{\ttheta} (x)} \bigr) \,\d\mu(x).
\end{eqnarray*}
By using the inequality
\[
\biggl\llvert \sqrt{a+b} - \frac{\sqrt{a}+\sqrt{b}}{ \sqrt{2}} \biggr\rrvert \leq (1 - {1}/{
\sqrt{2}} ) |\sqrt{b}-\sqrt{a} |\qquad  \mbox{for all $a,b \geq0$,}
\]
we get
\[
\overline{T}_1 (f_{\ttheta}, f_{\ttheta'}) \leq (1- {1}/{
\sqrt {2}} ) h^2 (f_{\ttheta}, f_{\ttheta'}).
\]
We then use the triangular inequality to deduce
%
\begin{equation}
\label{eqMajorationT2} \overline{T}_1 (f_{\ttheta}, f_{\ttheta'})
\leq (1- {1}/{\sqrt {2}} ) \biggl[ \biggl(1+ \frac{5+4\sqrt{2}}{7} \biggr)
h^2 (f_{\ttheta
}, f_{\tilde{\ttheta}}) + \biggl(1+
\frac{7}{5+4\sqrt{2}} \biggr) h^2 (f_{\ttheta'}, f_{\tilde{\ttheta}})
\biggr].\quad
\end{equation}
We now aim at bounding $\overline{T}_2 (f_{\ttheta}, f_{\ttheta'}) $
from above. We consider $\tau_0 \in(0,1/2]$ such that
%
\begin{equation}
\label{defTheta0MLE} \frac{1+\tau_0}{1-\tau_0} \leq1+ \frac{\tau}{2},
\end{equation}
and define
\[
\Theta_3 (\tau) = \Theta_0 \biggl(\frac{{\tau\lambda_0 \sqrt
{2}}/{64}}{5 \sqrt{2}/{384}}
\biggr) \cap\Theta_1 (\tau_0) \cap \Theta_2
(\tau_0/2),
\]
where we recall that $\Theta_0 (\cdot)$ is given by point (vi) of Assumption~\ref{AssumptionPourMLE} and
that $\Theta_1 (\cdot)$, $\Theta_2 (\cdot)$ are defined in the two
preceding claims. Thanks to Claim~\ref
{ClaimComparaisonVraisemblancePolynome}, there exists $\vartheta_0 > 0$
such that on $\mathcal{C}_n (\vartheta_0)$, there exists a solution
$\tilde{\ttheta}$ of the likelihood equation satisfying $\|\tilde
{\ttheta} - \ttheta_0\| \leq r$, and such that for all $\ttheta\in
\Theta_2 (\tau_0/2)$,
%
\begin{eqnarray}
\label{eqUtilisePreuveMLE} \biggl\llvert L (\tilde{\ttheta}) - L (\ttheta) -
\frac{1}{2} (\ttheta- \tilde{\ttheta})^T I(
\ttheta_0) (\ttheta- \tilde{\ttheta}) \biggr\rrvert <
\frac{\tau_0}{2} (\ttheta- \tilde{\ttheta})^T I (
\ttheta_0) (\ttheta- \tilde{\ttheta}).
\end{eqnarray}
We then set
\[
\vartheta= \min \biggl\{\vartheta_0, \frac{\tau\lambda_0 \sqrt{2}/64}{
{5 \sqrt{2}}/{ 384} } \biggr\}.
\]
We shall bound $\overline{T}_2 (f_{\ttheta}, f_{\ttheta'})$ on the
event $\mathcal{C}_n (\vartheta)$. To this end, remark that
\[
\biggl\llvert F(x) - \frac{\log x}{2 \sqrt{2}} \biggr\rrvert \leq\frac{5 \sqrt
{2}}{384}
\llvert \log x\rrvert ^3\qquad \mbox{for all $x > 0$.}
\]
Consequently,
%
\begin{eqnarray}
\label{eqT2MLE} \overline{T}_2 (f_{\ttheta}, f_{\ttheta'})
- \frac{L(\ttheta')}{2
\sqrt{2}} + \frac{ L (\ttheta) }{2 \sqrt{2}} &=& \frac{1}{n} \sum
_{i=1}^n \biggl[ F \biggl( \frac{f_{\ttheta'} (X_i)}{f_{\ttheta}
(X_i)}
\biggr) - \frac{ 1 }{2 \sqrt{2}} \log \biggl( \frac
{f_{\ttheta'}
(X_i)}{ f_{\ttheta} (X_i)} \biggr) \biggr]
\nonumber
\\[-8pt]
\\[-8pt]
\nonumber
&\leq& \frac{5 \sqrt{2}}{ 384 n} \sum_{i=1}^n
\bigl\llvert \log f_{\ttheta'} (X_i) - \log f_{\ttheta}
(X_i) \bigr\rrvert ^3.
\end{eqnarray}
On the event $\mathcal{C}_n (\vartheta)$, for all $\ttheta, \ttheta'
\in\Theta_3 (\tau)$:
\begin{eqnarray*}
\frac{5 \sqrt{2}}{ 384 n} \sum_{i=1}^n \bigl
\llvert \log f_{\ttheta} (X_i) - \log f_{\ttheta'}
(X_i) \bigr\rrvert ^3 &\leq& \frac{\tau\sqrt{2} }{64}
\lambda_0 \bigl\llVert \ttheta- \ttheta'\bigr\rrVert
^2
\\
&\leq& \frac{\tau\sqrt{2}}{64} \bigl(\ttheta- \ttheta'
\bigr)^T I(\ttheta_0) \bigl(\ttheta-
\ttheta'\bigr).
\end{eqnarray*}
By using $\ttheta, \ttheta' \in\Theta_1(\tau_0)$ and that $\tau_0
\leq
1/2$, we deduce from Claim~\ref{ClaimEncadrementHellinger} that
\begin{eqnarray*}
\frac{5 \sqrt{2}}{ 384 n} \sum_{i=1}^n \bigl
\llvert \log f_{\ttheta} (X_i) - \log f_{\ttheta'}
(X_i) \bigr\rrvert ^3 \leq\frac{8}{1 - {1}/{2}} \times
\frac{\tau\sqrt{2}}{64} h^2(f_{\ttheta}, f_{\ttheta'}).
\end{eqnarray*}
By putting this inequality into (\ref{eqT2MLE}),
\begin{eqnarray*}
\overline{T}_2 (f_{\ttheta}, f_{\ttheta'}) &\leq&
\frac{L(\ttheta
')}{2 \sqrt{2}} - \frac{ L (\ttheta) }{2 \sqrt{2}} + \frac{\tau
\sqrt
{2}}{4} h^2(f_{\ttheta},
f_{\ttheta'})
\\
&\leq& \frac{L(\ttheta') - L (\tilde{\ttheta})}{2 \sqrt{2}} - \frac{ L
(\ttheta) - L (\tilde{\ttheta}) }{2 \sqrt{2}} + \frac{\tau\sqrt
{2}}{4}
h^2(f_{\ttheta}, f_{\ttheta'}).
\end{eqnarray*}
We deduce from (\ref{eqUtilisePreuveMLE}),
\begin{eqnarray*}
\overline{T}_2 (f_{\ttheta}, f_{\ttheta'}) &\leq&
\frac{1 + \tau_0}{4
\sqrt{2}} (\ttheta- \tilde{\ttheta})^T I (
\ttheta_0) (\ttheta- \tilde {\ttheta}) - \frac{1 - \tau_0}{4 \sqrt{2}} \bigl(
\ttheta' - \tilde {\ttheta }\bigr)^T I (
\ttheta_0) \bigl(\ttheta' - \tilde{\ttheta}\bigr)\\
&&{} +
\frac{\tau\sqrt
{2}}{4} h^2(f_{\ttheta}, f_{\ttheta'}).
\end{eqnarray*}
Since $\ttheta, \ttheta' $ belong together to $\Theta_1 (\tau_0)$,
\begin{eqnarray*}
\overline{T}_2 (f_{\ttheta}, f_{\ttheta'}) \leq\sqrt{2}
\frac{1 +
\tau_0}{1 - \tau_0} h^2(f_{\tilde{\ttheta}}, f_{\ttheta}) - \sqrt
{2}\frac{1 - \tau_0}{1 + \tau_0} h^2(f_{\tilde{\ttheta}}, f_{\ttheta'}) +
\frac{\tau\sqrt{2}}{4} h^2(f_{\ttheta}, f_{\ttheta'}).
\end{eqnarray*}
It follows from (\ref{defTheta0MLE}) that $(1 - \tau_0)/(1 + \tau_0)
\geq1/ (1+\tau/2) \geq1 - \tau/2$, and thus
\begin{eqnarray*}
\overline{T}_2 (f_{\ttheta}, f_{\ttheta'}) \leq\sqrt{2}
\biggl(1 + \frac{\tau}{2} \biggr) h^2(f_{\tilde{\ttheta}},
f_{\ttheta}) - \sqrt {2} \biggl(1 - \frac{\tau}{2} \biggr)
h^2(f_{\tilde{\ttheta}}, f_{\ttheta
'}) + \frac{\tau\sqrt{2}}{4}
h^2(f_{\ttheta}, f_{\ttheta'}).
\end{eqnarray*}
By using the triangular inequality,
\[
\frac{\tau\sqrt{2}}{4} h^2(f_{\ttheta}, f_{\ttheta'}) \leq
\sqrt{2} \biggl( \frac{\tau}{2} h^2(f_{\tilde{\ttheta}},
f_{\ttheta}) + \frac
{\tau}{2} h^2(f_{\tilde{\ttheta}},
f_{\ttheta'}) \biggr)
\]
and hence
\begin{eqnarray*}
\overline{T}_2 (f_{\ttheta}, f_{\ttheta'}) \leq(\sqrt{2}
+ \tau) h^2(f_{\tilde{\ttheta}}, f_{\ttheta}) - (\sqrt{2} - \tau)
h^2(f_{\tilde
{\ttheta}}, f_{\ttheta'}).
\end{eqnarray*}
We then use (\ref{decomTest}) and (\ref{eqMajorationT2}) to complete
the proof.
\end{pf}

We now return to the proof of Theorem~\ref{thmGeneralLienAvecMLE}. Let
$\beta_1, \beta_2, \beta_3$ be the numbers given at the beginning of
the proof of Theorem~\ref{ThmPrincipal} (they only depend on $\kappa$).
Let $\tau= 0.01$ and let $\Theta_3 (\tau)$ be the set given by
Claim~\ref{dernierClaimMLE}. There exists $r_0 > 0$ such that the ball
\[
\B(\ttheta_0,r_0) = \bigl\{\ttheta\in\Theta,
h(f_{\ttheta}, f_{\ttheta
_0}) \leq r_0\bigr\}
\]
is included in $\Theta_3 (\tau)$.
We define $\xi= r_0^2 \beta_1 / (9 \beta_3)$ and consider $\vartheta>
0$ so that there exists a solution $\tilde{\ttheta}$ of the likelihood
equation on $\mathcal{C}_n (\vartheta)$ satisfying
%
\begin{equation}
\label{eqPreuveMLEHellinger} h^2(f_{\ttheta_0}, f_{\tilde{\ttheta}}) \leq
\bigl[9 (1 + \beta _2/\beta_1 ) \bigr]^{-1}
r_0^2.
\end{equation}
We may assume (without lost of generality) that $\tilde{\ttheta} \notin\Theta_{N_n}$. We then set
\[
p = \max \bigl\{i \in\{1,\ldots,N_n-1\}, \tilde{\ttheta} \in \Theta
_i \bigr\}.
\]
By the definition of $p$, $\tilde{\ttheta} \in\Theta_{p} \setminus
\Theta_{p+1}$. There exists $\ell\in\{1,\ldots,L_p\}$ such that
$\tilde
{\ttheta} \in B^{(p,\ell)}$ and a look at the proof of Lemma~\ref
{LemmeControleH} shows that on the event $\Omega_{\xi} \cap\mathcal
{C}_n (\vartheta)$,
%
\begin{eqnarray}
\label{eqDansPreuveMLE} \beta_2 h^2(f_{\ttheta_0},
f_{\tilde{\ttheta}}) + \beta_3 \frac
{D_{\F
}^{(n)}}{n} + \frac{r_0^2 \beta_1}{9}
\geq\beta_1 \bigl(h^2(f_{\tilde
{\ttheta}},f_{\ttheta^{(p,\ell)}})
+ h^2(f_{\tilde{\ttheta
}},f_{\ttheta
'^{(p,\ell)}}) \bigr).
\end{eqnarray}
Without lost of generality, we may suppose that $T({\ttheta}^{(p,\ell
)}, {\ttheta}'^{(p,\ell)}) = \overline{T}(f_{\pi({\ttheta}^{(p,\ell)})},
f_{\pi({\ttheta}'^{(p,\ell)})})$ is non-negative and $\tilde
{\ttheta}
\in\B (\ttheta^{(p,\ell)}, r_{p,\ell}  )$.
Now, by using the triangular inequality and the fact that for all
$\ttheta\in\Theta$,
\[
h^2(f_{\ttheta}, f_{\pi(\ttheta)}) \leq\sup
_{1 \leq j \leq d} \overline {R}_j \varepsilon_j^{2}
= \frac{1}{n} \sup_{1 \leq j \leq d} \bigl(t_j^{(n)}
\bigr)^2 \leq d/n,
\]
we get
\[
h^2 (f_{\pi({\ttheta}^{(p,\ell)})}, f_{\ttheta_0} ) \leq 3 h^2
(f_{{\ttheta}^{(p,\ell)}}, f_{\tilde{\ttheta}} ) + 3 h^2 (f_{\tilde{\ttheta}},
f_{\ttheta_0} ) + 3 d /n.
\]
We use (\ref{eqDansPreuveMLE}) and then (\ref{eqPreuveMLEHellinger})
to deduce
\begin{eqnarray*}
h^2 (f_{\pi({\ttheta}^{(p,\ell)})}, f_{\ttheta_0} ) &\leq & 3 (1 +
\beta_2/\beta_1 ) h^2(f_{\ttheta_0},
f_{\tilde
{\ttheta
}}) + 3 (\beta_3/\beta_1)
\frac{D_{\F}^{(n)}}{n} + \frac{r_0^2
}{3} + \frac{3 d}{n}
\\
&\leq& \frac{2 r_0^2 }{3} + 3 (\beta_3/\beta_1)
\frac{D_{\F}^{(n)}}{n} + \frac{3 d}{n}.
\end{eqnarray*}
Since $3 (\beta_3/\beta_1) {D_{\F}^{(n)}}/{n} + {3 d}/{n}$ tends to $0$
when $n$ goes to infinity, there exists $n_0 \in\N^{\star}$ such that
for all $n \geq n_0$, $ h^2  (f_{\pi({\ttheta}^{(p,\ell)})},
f_{\ttheta_0}  ) \leq r_0^2$.
Similarly, the bound $ h^2  (f_{\pi({\ttheta'}^{(p,\ell)})},
f_{{\ttheta_0}}  ) \leq r_0^2$ also holds. In particular, $\pi
({\ttheta}^{(p,\ell)})$ and $\pi({\ttheta'}^{(p,\ell)})$ belong
together to $\Theta_3 (\tau)$. We can therefore use (\ref
{ComparaisonTestHellinger}) to get
\begin{eqnarray*}
\overline{T} (f_{\pi({\ttheta}^{(p,\ell)})}, f_{\pi({\ttheta
}'^{(p,\ell
)})}) &\leq& \biggl(
\frac{8 + 5\sqrt{2}}{7} + \tau \biggr) h^2(f_{\tilde{\ttheta}},
f_{\pi({\ttheta}^{(p,\ell)})})
\\
& &{} - (8 - 5 \sqrt{2} - \tau ) h^2(f_{\tilde
{\ttheta}},
f_{\pi({\ttheta'}^{(p,\ell)})}).
\end{eqnarray*}
Since $\overline{T} (f_{\pi({\ttheta}^{(p,\ell)})}, f_{\pi
({\ttheta
}'^{(p,\ell)})}) $ is non-negative, we may replace $\tau$ by its
numerical value $\tau= 0.01$ to get
\[
h (f_{\tilde{\ttheta} }, f_{\pi({\ttheta'}^{(p,\ell)})}) \leq1.6 h (f_{\tilde{\ttheta}},
f_{\pi({\ttheta}^{(p,\ell)})}).
\]
Therefore,
\begin{eqnarray*}
h(f_{{\ttheta}^{(p,\ell)}}, f_{{\ttheta}'^{(p,\ell)}}) &\leq& h(f_{{\ttheta}^{(p,\ell)}},
f_{\tilde{\ttheta}}) + h(f_{\tilde
{\ttheta
}}, f_{\pi({\ttheta}'^{(p,\ell)})}) +
h(f_{\pi({\ttheta}'^{(p,\ell)})}, f_{{\ttheta}'^{(p,\ell)}})
\\
&\leq& h(f_{{\ttheta}^{(p,\ell)}}, f_{\tilde{\ttheta}}) + 1.6 h (f_{\tilde{\ttheta}},
f_{\pi({\ttheta}^{(p,\ell)})}) + \sqrt {\frac
{1}{n} \sup_{1 \leq j \leq d}
\bigl(t_j^{(n)} \bigr)^{2}}
\\
&\leq& h(f_{{\ttheta}^{(p,\ell)}}, f_{\tilde{\ttheta}}) + 1.6 \biggl[ h
(f_{\tilde{\ttheta}}, f_{{\ttheta}^{(p,\ell)}}) + \sqrt{\frac
{1}{n} \sup
_{1 \leq j \leq d} \bigl(t_j^{(n)}
\bigr)^{2}} \biggr] + \sqrt{\frac{1}{n} \sup
_{1 \leq j \leq d} \bigl(t_j^{(n)}
\bigr)^{2}}
\\
&\leq& 2.6 h(f_{{\ttheta}^{(p,\ell)}}, f_{\tilde{\ttheta}}) + 2.6\sqrt {
\frac{1}{n} \sup_{1 \leq j \leq d} \bigl(t_j^{(n)}
\bigr)^{2}}.
\end{eqnarray*}
Since $\tilde{\ttheta} \in\B (\ttheta^{(p,\ell)}, r_{p,\ell}
 )$,
\begin{eqnarray*}
h(f_{{\ttheta}^{(p,\ell)}}, f_{{\ttheta}'^{(p,\ell)}}) \leq2.6 \bar {\kappa}^{1/2} h
(f_{{\ttheta}^{(p,\ell)}}, f_{{\ttheta}'^{(p,\ell)}}) + 2.6 \sqrt{\frac{1}{n}
\sup_{1 \leq j \leq d} \bigl(t_j^{(n)}
\bigr)^{2}}.
\end{eqnarray*}
By replacing $\bar{\kappa}$ by its numerical value $\bar{\kappa} =
3/2-\sqrt{2}$,
\begin{eqnarray*}
h(f_{{\ttheta}^{(p,\ell)}}, f_{{\ttheta}'^{(p,\ell)}}) \leq10.91 \sqrt {
\frac{1}{n} \sup_{1 \leq j \leq d} \bigl(t_j^{(n)}
\bigr)^{2}}.
\end{eqnarray*}
Let now $\ttheta$ be any element of $\Theta_{N_n}$. Since $\ttheta,
\tilde{\ttheta}$ belong together to $\Theta_p$,
\begin{eqnarray*}
\delta^2 ({\ttheta}, {\tilde{\ttheta}}) \leq\sup
_{\ttheta',
\ttheta''
\in\Theta_p} \delta^2 \bigl({\ttheta'}, {
\ttheta''}\bigr) \leq h^2(f_{{\ttheta
}^{(p,\ell)}},
f_{{\ttheta}'^{(p,\ell)}}) \leq\frac{120}{n} \sup_{1
\leq j \leq d}
\bigl(t_j^{(n)} \bigr)^{2}.
\end{eqnarray*}
Finally, we have shown that there exist $\xi> 0$, $\vartheta> 0$,
$n_0 \in\N^{\star}$ such that for all $n \geq n_0$,
\[
\P \biggl[\exists\tilde{\ttheta} \in\Theta, \dot{L}(\tilde {\ttheta}) = 0
\mbox{
and } \inf_{\ttheta\in\Theta_{N_n}} \delta^2 (\ttheta, \tilde{
\ttheta} ) \leq\frac{120 }{n} \sup_{1 \leq j \leq d}
\bigl(t_j^{(n)} \bigr)^{2} \biggr] \geq\P \bigl[
\Omega_{\xi} \cap \mathcal {C}_n (\vartheta) \bigr].
\]
The theorem follows from $\P(\Omega_{\xi}) \geq1 - \mathrm{e}^{ - n \xi}$ and
$\mathcal{C}_n (\vartheta) = \mathcal{A}_n (\vartheta) \cap
\mathcal
{A}'_n (\vartheta)$.

\subsection{Proof of Theorem \texorpdfstring{\protect\ref{ThmPrincipalDim1}}{2.1}}
It follows from Theorem~\ref{ThmPrincipal}, page \pageref
{ThmPrincipal}, where $d=1$, $\alpha_1 = \alpha$, $\Theta_i =
[\theta
^{(i)}, \theta'^{(i)}]$, $L_i = 1$, $\underline{R}_1^{(n)} =
\underline
{R}$, $\overline{R}_1 = \overline{R}$ and $\delta^2(\theta, \theta
') =
\underline{R} |\theta- \theta'|^{\alpha}$ in the first part of the
theorem and $\delta^2(\theta, \theta') = h^2 (f_{\theta}, f_{\theta'})$
in the second part. 

\subsection{Proof of Proposition \texorpdfstring{\protect\ref{PropCalculComplexiteDimen1}}{2.1}} \label{sectionProofPreuveComplexiteDim1}
For all $i \in\{1,\ldots,N-1\}$,
\begin{eqnarray*}
\theta^{(i+1)}&\in& \bigl\{ \theta^{(i)}, \theta^{(i)}
+ \min \bigl(\overline{r} \bigl(\theta^{(i)},\theta'^{(i)}
\bigr), \bigl(\theta'^{(i)} - \theta ^{(i)}
\bigr)/2 \bigr) \bigr\},
\\
\theta'^{(i+1)} &\in& \bigl\{\theta'^{(i)},
\theta'^{(i)} - \min \bigl(\underline{r} \bigl(
\theta^{(i)},\theta'^{(i)}\bigr), \bigl(
\theta'^{(i)} - \theta ^{(i)}\bigr)/2 \bigr) \bigr
\}.
\end{eqnarray*}
Since $\overline{r} (\theta^{(i)},\theta'^{(i)})$ and $\underline{r}
(\theta^{(i)},\theta'^{(i)})$ are larger than
$(\kappa\underline{R}/ \overline{R})^{1/\alpha} (\theta'^{(i)} -
\theta
^{(i)} )$,
\[
\theta'^{(i+1)} - \theta^{(i+1)} \leq\max \bigl\{1 -
{ (\kappa \underline{R}/\overline{R} )^{1/\alpha}}, {1}/{2} \bigr\} \bigl(
\theta '^{(i)} - \theta^{(i)} \bigr).
\]
By induction, we derive that for all $i \in\{1,\ldots,N-1\}$,
\[
\theta'^{(i+1)} - \theta^{(i+1)} \leq \bigl(\max
\bigl\{1 - { (\kappa\underline{R}/\overline{R} )^{1/\alpha}}, {1}/{2} \bigr\}
\bigr)^{i} (M -m).
\]
Consequently, the algorithm converges in less than $N$ iterations where
$N$ is the smallest integer such that
\[
\bigl( \max \bigl\{1 - { (\kappa\underline{R}/\overline{R} )^{1/\alpha}},
{1}/{2} \bigr\} \bigr)^{N} (M-m) \leq\eta,
\]
that is,
\[
N\geq\frac{\log ( (M- m )/\eta )}{ -\log [ \max
\{1 - { (\kappa\underline{R}/\overline{R}  )^{1/\alpha}},
1/2  \}  ]}.
\]
We conclude by using the inequality $-1/\log(1-x) \leq1/x$ for all $x
\in(0,1)$.

\subsection{Proof of Theorem \texorpdfstring{\protect\ref{thmLienMLEDim1}}{2.2}}
We shall apply Theorem~\ref{thmGeneralLienAvecMLE}, page \pageref
{thmGeneralLienAvecMLE}, with $\Theta_i = [\theta^{(i)}, \theta
'^{(i)}]$, $L_i = 1$, $\delta^2(\theta, \theta') = \underline{R}
|\theta- \theta'|^{2}$ and with $N_n$ corresponding to the last step
of the algorithm.

The proof that Assumption \ref{HypSurLeModeleQuelquonqueDebutDim1} holds can be derived from
Theorem~3 of Section~31 of Borovkov~\cite{borovkov1998}. Moreover,
point (vi) of Assumption~\ref
{AssumptionPourMLE} is satisfied with the event $\A_n (\vartheta)$ on which
\[
\frac{1}{n} \sum_{i=1}^n
\varphi_1^3 (X_i) \leq2 \E \bigl[
\varphi_1^3 (X_1) \bigr] \quad\mbox{and}\quad
\frac{1}{n} \sum_{i=1}^n
\varphi_2 (X_i) \leq2 \E \bigl[\varphi_2
(X_1) \bigr]
\]
and with
\[
\Theta_0 (\vartheta) = \biggl\{\theta\in(m,M), |\theta-
\theta_0| \leq\min \biggl\{\frac{1}{2} \biggl(
\frac{\vartheta}{ 2 \E
[\varphi
_1^3 (X_1)  ]} \biggr)^{1/(3\gamma_1-2)}, \biggl(\frac
{\vartheta}{2
\E [\varphi_2 (X_1)  ] }
\biggr)^{1/\gamma_2} \biggr\} \biggr\}.
\]
Theorem~\ref{thmGeneralLienAvecMLE} then asserts that there exist
$\vartheta> 0$, $\xi> 0$, $n_0 \in\N^{\star}$, such that for $n
\geq n_0$:
\begin{eqnarray*}
& & \P \Biggl[\exists\tilde {\theta } \in(m,M), \sum
_{i=1}^n \dot{l}_{\tilde{\theta}}
(X_i) = 0 \mbox { and } \inf_{\theta\in\Theta_{N_n}} \underline{R}
(\theta- \tilde {\theta})^2 \leq\frac{120 }{n}
\bigl(t^{(n)} \bigr)^2 \Biggr]
\\
&&\quad\geq  1 - \bigl\{\P \bigl[ \bigl(\mathcal{A}_n (\vartheta)
\bigr)^c \bigr] + \P \bigl[ \bigl( \mathcal{A}_n'
(\vartheta) \bigr)^c \bigr] + \mathrm{e}^{-n
\xi} \bigr\}.
\end{eqnarray*}
Recalling that $\theta'^{(N_n)} - \theta^{(N_n)} \leq t^{(n)}
(\overline
{R} n)^{-1/2}$ and that $\hat{\theta}$ is the middle of the
interval $\Theta_{N_n} = [\theta^{(N_n)}, \theta'^{(N_n)}]$,
\[
\underline{R} (\hat{\theta} - \tilde{\theta})^2 \leq2 \inf
_{\theta\in
\Theta_{N_n}} \underline{R} (\theta- \tilde{\theta})^2 + 2
( \underline {R}/{\overline{R}}) \frac{(t^{(n)})^2}{n}.
\]
This shows that there exists $C > 0$ such that
\begin{eqnarray*}
& & \P \Biggl[\exists\tilde {\theta } \in(m,M), \sum
_{i=1}^n \dot{l}_{\tilde{\theta}}
(X_i) = 0 \mbox { and } |\hat{\theta} - \tilde{\theta} | \leq C
\frac{ t^{(n)}}{
\sqrt{n}} \Biggr]
\\
&&\quad\geq  1 - \bigl\{\P \bigl[ \bigl(\mathcal{A}_n (\vartheta)
\bigr)^c \bigr] + \P \bigl[ \bigl( \mathcal{A}_n'
(\vartheta) \bigr)^c \bigr] + \mathrm{e}^{-n
\xi} \bigr\}.
\end{eqnarray*}
Therefore, for all $n$, this probability is always larger than $1-\zeta
_n$ where\vspace*{-1pt}
\begin{eqnarray*}
\zeta_n = \cases{ $1$, &\quad $\mbox{if $n < n_0$},$
\vspace*{2pt}
\cr
\min\bigl\{1, \P \bigl[ \bigl(\mathcal{A}_n (
\vartheta) \bigr)^c \bigr] + \P \bigl[ \bigl( \mathcal{A}_n'
(\vartheta) \bigr)^c \bigr] + \mathrm{e}^{- n \xi
}\bigr\} ,&\quad $\mbox{if $n
\geq n_0$.}$}\vspace*{-1pt}
\end{eqnarray*}
By the law of large numbers, the two probabilities $\P [
(\mathcal{A}_n (\vartheta) )^c  ]$ and $\P [ (
\mathcal{A}_n' (\vartheta) )^c ]$ converge to $1$ and
therefore also the sequence $(\zeta_n)_{n \geq1}$.

We now prove that $\hat{\theta}$ is asymptotically efficient.
Let $\tilde{\theta}$ be an estimator satisfying $\sum_{i=1}^n \dot
{l}_{\tilde{\theta}} (X_i) = 0$ and $|\hat{\theta} - \tilde{\theta
} |
\leq C { t^{(n)}}/{ \sqrt{n}}$ with probability tending to $1$ when
$n$ goes to infinity.
Let us consider for $\mu$-almost all $x \in\XX$,\vspace*{-1pt}
\[
R(x) = \int_0^1 \bigl(\ddot{l}_{\theta_0 + u (\tilde{\theta} -
\theta
_0)}
(x) - \ddot{l}_{\theta_0} (x) \bigr) \,\d u.\vspace*{-1pt}
\]
Then\vspace*{-1pt}
\[
\dot{l}_{\tilde{\theta}} (x) = \dot{l}_{\theta_0} (x) + \ddot
{l}_{\theta_0} (x) (\tilde{\theta} - \theta_0) + R(x) (\tilde
{\theta} - \theta_0).\vspace*{-1pt}
\]
Therefore,\vspace*{-1pt}
\begin{eqnarray*}
\frac{1}{n} \sum_{i=1}^n
\dot{l}_{\tilde{\theta}} (X_i) = \frac{1}{n} \sum
_{i=1}^n \dot{l}_{\theta_0}
(X_i) + \frac{1}{n} \sum_{i=1}^n
\ddot {l}_{\theta_0} (X_i) (\tilde{\theta} -
\theta_0) + \Biggl(\frac{1}{n} \sum
_{i=1}^n R(X_i) \Biggr) (\tilde{
\theta} - \theta_0),\vspace*{-1pt}
\end{eqnarray*}
and hence\vspace*{-1pt}
\begin{eqnarray*}
\sqrt{n} (\hat{\theta} - \theta_0) &=& \sqrt{n} (\hat{\theta} -
\tilde {\theta}) + \sqrt{n} (\tilde{\theta} - \theta_0)
\\[-1pt]
&=& \sqrt{n} (\hat{\theta} - \tilde{\theta})
+ \frac{({1}/{\sqrt{n}}) \sum_{i=1}^n \dot{l}_{\theta_0} (X_i)
-( {1}/{\sqrt{n}})
\sum_{i=1}^n \dot{l}_{\tilde{\theta}} (X_i) }{-({1}/{n}) \sum_{i=1}^n
\ddot{l}_{\theta_0} (X_i) -( {1}/{n}) \sum_{i=1}^n R(X_i) }.\vspace*{-1pt}
\end{eqnarray*}
Now, with probability tending to $1$ when $n$ goes to infinity,\vspace*{-1pt} 
\begin{eqnarray*}
\Biggl\llvert \frac{1}{n} \sum_{i=1}^n
R(X_i) \Biggr\rrvert &\leq& 2 \E \bigl[\varphi _2(X_1)
\bigr] |\tilde{\theta} - \theta_0|^{\gamma_2}
\\[-1pt]
&\leq& 2 \E \bigl[\varphi_2(X_1) \bigr] \biggl(C
\frac
{t^{(n)}}{\sqrt
{n}} + |\hat{\theta} - \theta_0|
\biggr)^{\gamma_2}.\vspace*{-1pt}
\end{eqnarray*}
Remark now that the term $D_{\F}/n$ involved in Theorem~\ref
{ThmPrincipalDim1} tends to $0$, which shows that $\hat{\theta}$
converges almost surely to $\theta_0$. Therefore, $n^{-1} \sum_{i=1}^n
R(X_i)$ converges to $0$ in probability.
Slutsky's theorem then shows that\vspace*{-1pt}
\[
\frac{({1}/{\sqrt{n}}) \sum_{i=1}^n \dot{l}_{\theta_0} (X_i) -
({1}/{\sqrt{n}}) \sum_{i=1}^n \dot{l}_{\tilde{\theta}} (X_i) }
{-({1}/{n}) \sum_{i=1}^n \ddot{l}_{\theta_0} (X_i) - ({1}/{n}) \sum_{i=1}^n R(X_i) }
\]
converges in distribution to $\mathcal{N}  (0,1/I(\theta_0) )$.
We then reuse Slutsky's theorem to prove the asymptotic efficiency of
$\hat{\theta}$.

Suppose now that there exists $\lambda> 0$ such that $\E[\exp
(\lambda\varphi_2 (X_1)) ]$, $\E[ \exp(\lambda|\dot{l}_{\theta_0}
(X_1)|) ]$ and $\E[\exp(\lambda|\ddot{l}_{\theta_0}(X_1)|) ]$ are
finite. Then the fact that $\P [  (\mathcal{A}_n (\vartheta
) )^c  ] $ and $\P [  (\mathcal{A}_n'
(\vartheta
) )^c  ] $ go to $0$ exponentially fast ensues from the
following result which goes back to Cram\'{e}r.

\begin{lemme}
Let $Y_1,\ldots,Y_n$ be $n$ independent and identically distributed $\R
$-valued random variables satisfying $\E [\exp(\lambda
|Y_1|)
] < \infty$ for some $\lambda> 0$. Then, for all $\vartheta> 0$,
there exists $\sigma> 0$ such that
\[
\P \Biggl[ \Biggl\llvert \frac{1}{n} \sum_{i=1}^n
\bigl(Y_i - \E [Y_i] \bigr) \Biggr\rrvert \geq
\vartheta \Biggr] \leq2 \mathrm{e}^{- \sigma n}.
\]
\end{lemme}

Notice now that one can always replace $\varphi_1$ by $\varphi_1 (x) =
\sup_{\theta\in(m,M)} |\dot{l}_{\theta} (x)|$ and $\gamma_1$ by
$\gamma_1 = 1$ since
\[
\bigl|\log f_{\theta'} (x) - \log f_{\theta} (x)\bigr| \leq \Bigl( \sup
_{\theta
'' \in(m,M)} \bigl|\dot{l}_{\theta''} (x)\bigr| \Bigr) \bigl|
\theta' - \theta\bigr|.
\]
We shall show that there exists $\lambda_1 > 0$ such that $\E[\exp
(\lambda_1 \varphi_1 (X_1))] < \infty$. Since $\varphi_2 (X_1)$ has
also finite exponential moments, the preceding lemma will show that $\P
 [  (\mathcal{A}_n (\vartheta) )^c  ] $ goes to $0$
exponentially fast. By setting $\lambda_0 = \lambda/\max\{2,
2(M-m)^{\gamma_2}\}$,
\begin{eqnarray*}
\E \Bigl[\exp \Bigl[ \lambda_0 \sup_{\theta\in(m,M)} \bigl
\{\bigl|\ddot {l}_{\theta
}(X_1)\bigr| \bigr\} \Bigr] \Bigr] &\leq& \E
\Bigl[\exp \bigl[\lambda_0 \bigl|\ddot {l}_{\theta_0}(X_1)\bigr|
\bigr] \exp \Bigl[\lambda_0 \sup_{\theta\in(m,M)} \bigl|
\ddot{l}_{\theta}(X_1) - \ddot{l}_{\theta_0}(X_1)\bigr|
\Bigr] \Bigr]
\\[-3pt]
&\leq& \E \bigl[\exp \bigl[ (\lambda/2 ) \bigl| \ddot {l}_{\theta
_0}(X_1)\bigr|
\bigr] \exp \bigl[ (\lambda/2 ) \varphi_2 (X_1) \bigr]
\bigr]
\\[-3pt]
&\leq& \E^{1/2} \bigl[\exp\bigl( \lambda\bigl|\ddot{l}_{\theta_0}(X_1)
\bigr)\bigr | \bigr] \E ^{1/2} \bigl[\exp\bigl( \lambda\varphi_2
(X_1)\bigr) \bigr]
\\[-3pt]
&<& \infty.
\end{eqnarray*}
By setting $\lambda_1 = (1/2)\min\{\lambda, \lambda_0/ (M-m)\}$,
\begin{eqnarray*}
\E \Bigl[\exp \Bigl[ \lambda_1 \sup_{\theta\in(m,M)} \bigl|\dot
{l}_{\theta
}(X_1) \bigr| \Bigr] \Bigr] &\leq& \E \Bigl[\exp \bigl[
\lambda_1\bigl |\dot {l}_{\theta_0}(X_1)\bigr | \bigr] \exp
\Bigl[ \lambda_1 \sup_{\theta\in
(m,M)} \bigl| \ddot{l}_{\theta}(X_1)\bigr|
(M-m) \Bigr] \Bigr]
\\[-3pt]
&\leq& \E^{1/2} \bigl[\exp\bigl( \lambda\bigl|\dot{l}_{\theta_0}(X_1)
\bigr|\bigr) \bigr] \E^{1/2} \Bigl[ \exp\Bigl[ \lambda_0 \sup
_{\theta\in(m,M)}\bigl | \ddot {l}_{\theta}(X_1)\bigr| \Bigr]
\Bigr]
\\[-3pt]
&<& \infty,
\end{eqnarray*}
which completes the proof. 

\begin{appendix}\label{app}

\section*{Appendix: Proof of Theorem \texorpdfstring{\protect\ref{54515215415152151514}}{4.1}}\label{SectionAnnexe1} 
This theorem follows from Theorem~\ref{ThmPrincipal} as explained in
Section~\ref{subsectionThmprincipal}.
It remains to prove that its assumptions are fulfilled, that is, that
(\ref{DefintiionBiell}) and (\ref{defdeDelta}) hold.

For this purpose, remark that the different parameters $\underline
{r}_j$, $\overline{r}_j$, $\varrho_j$, $\varrho_j'$, $\varrho_k, \ldots$ that
have been introduced in Algorithm \ref
{algoConstructionDimQuelquonqueAvant} depend on the set $\Theta_i$ and
may vary at each iteration of the until loop.
We need to make explicit this dependency in order to prove rigorously
(\ref{DefintiionBiell}) and (\ref{defdeDelta}). 
Unfortunately, this makes the algorithm more difficult to read.\vadjust{\goodbreak}
\begin{algorithm}
\caption{Rewriting of Algorithm \protect\ref{algoConstructionDimQuelquonqueAvant}}
\begin{algorithmic}[1]
\REQUIRE$\Theta_i = \prod_{j=1}^d [a_j^{(i)}, b_j^{(i)}]$
\STATE Choose $k^{(i)} \in\{1,\ldots,d\}$ such that
\[
\underline{R}_{\Theta_i, k^{(i)}} \bigl(b_{{k^{(i)}}}^{(i)} -
a_{{k^{(i)}}}^{(i)} \bigr)^{\alpha_{k^{(i)}}} = \max
_{1 \leq j \leq d} \underline{R}_{\Theta_i, j} \bigl(b_{j}^{(i)}
- a_{j}^{(i)} \bigr)^{\alpha_j}.
\]
\STATE$\ttheta^{(i,1)} = (a_1^{(i)},\ldots,a_d^{(i)})$
\STATE$\ttheta'^{(i,1)} = (a_1^{(i)},\ldots, a_{k^{(i)}-1}^{(i)},
b_{k^{(i)}}^{(i)}, a_{k^{(i)}+1}^{(i)}, a_d^{(i)})$
\STATE${\varrho_j}^{(i,0)} = \overline{r}_{\Theta_i,j} (\ttheta
^{(i,1)},\ttheta'^{(i,1)})$ and ${\varrho_j'}^{(i,0)} = \overline
{r}_{\Theta
_i,j} (\ttheta'^{(i,1)},\ttheta^{(i,1)})$ for all $j \neq k^{(i)} $
\STATE$\varrho_{k^{(i)}}^{(i,0)} = (b_{k^{(i)}}^{(i)} -
a_{k^{(i)}}^{(i)})/2$ and $\varrho_{k^{(i)}}'^{(i,0)} =
(b_{k^{(i)}}^{(i)} - a_{k^{(i)}}^{(i)})/2$
\FORALL{$\ell\geq1$}
\IF{$T(\ttheta^{(i,\ell)},\ttheta'^{(i,\ell)}) \geq0$}
\STATE$\varrho_{\psi_{k^{(i)}}(1)}^{(i,\ell)} = \overline{r}_{\Theta
_i,\psi
_{k^{(i)}}(1)} (\ttheta^{(i,\ell)},\ttheta'^{(i,\ell)})$
\STATE$\varrho_{\psi_{k^{(i)}} (j)}^{(i,\ell)} = \min(\varrho
_{\psi
_{k^{(i)}} (j)}^{(i,\ell-1)}, \overline{r}_{\Theta_i,\psi_{k^{(i)}}(j)}
(\ttheta^{(i,\ell)},\ttheta'^{(i,\ell)}))$, for all $j \in\{
2,\ldots,
d-1\}$
\STATE$\varrho_{k^{(i)}}^{(i,\ell)} = \min(\varrho
_{k^{(i)}}^{(i,\ell
-1)}, \overline{r}_{\Theta_i,k^{(i)}} (\ttheta^{(i,\ell)},\ttheta
'^{(i,\ell)}))$
\STATE$\mathfrak{J}^{(i,\ell)} =  \{1 \leq j \leq d -1,
\theta
_{\psi_{k^{(i)}}(j)}^{(i,\ell)} + \varrho_{\psi
_{k^{(i)}}(j)}^{(i,\ell
)} < b_{\psi_{k^{(i)}}(j)}^{(i)} \}$
\IF{$\mathfrak{J}^{(i,\ell)} \neq\varnothing$}
\STATE$\mathfrak{j}_{\mathrm{min}}^{(i,\ell)} = \min\mathfrak
{J}^{(i,\ell)}$
\STATE Define $\ttheta^{(i,\ell+1)}$ as
\begin{eqnarray*}
\cases{ \theta_{\psi_{k^{(i)}}(j)}^{(i,\ell+1)} = a_{\psi_{k^{(i)}}(j)}^{(i)},
& \quad $\mbox{for all $j < \mathfrak{j}_{\mathrm{min}}^{(i,\ell)} $},$
\vspace*{2pt}
\cr
\theta_{\psi_{k^{(i)}}(\mathfrak{j}_{\mathrm{min}}^{(i,\ell
)})}^{(i,\ell
+1)} = \theta_{\psi_{k^{(i)}}(\mathfrak{j}_{\mathrm{min}}^{(i,\ell
)})}^{(i,\ell)}
+ \varrho_{\psi_{k^{(i)}}(\mathfrak{j}_{\mathrm{min}}^{(i,\ell
)})}^{(i,\ell
)} ,\vspace*{2pt}
\cr
\theta_{\psi_{k^{(i)}}(j)}^{(i,\ell+1)}
= \theta_{\psi
_{k^{(i)}}(j)}^{(i,\ell)}, &\quad  $\mbox{for all $j > \mathfrak
{j}_{\mathrm
{min}}^{(i,\ell)} $} ,$ \vspace*{2pt}
\cr
\theta_{{k^{(i)}}}^{(i,\ell+1)}
= a_{{k^{(i)}}}^{(i)} }
\end{eqnarray*}
\ELSE
\STATE Define $\ttheta^{(i,\ell+1)} = \ttheta^{(i,\ell)}$
\STATE$\mathfrak{j}^{(i,\ell)}_{\mathrm{min}} = d$
\ENDIF
\ENDIF
\IF{$T(\ttheta^{(i,\ell)},\ttheta'^{(i,\ell)}) \leq0$}
\STATE$\varrho_{\psi_{k^{(i)}}(1)}'^{(i,\ell)} = \overline{r}_{\Theta
_i,\psi
_{k^{(i)}}(1)} (\ttheta'^{(i,\ell)},\ttheta^{(i,\ell)})$
\STATE$\varrho_{\psi_{k^{(i)}} (j)}'^{(i,\ell)} = \min(\varrho
_{\psi
_{k^{(i)}} (j)}'^{(i,\ell-1)}, \overline{r}_{\Theta_i,\psi_{k^{(i)}}(j)}
(\ttheta'^{(i,\ell)},\ttheta^{(i,\ell)}))$, for all $j \in\{
2,\ldots,
d-1\}$
\STATE$\varrho_{k^{(i)}}'^{(i,\ell)} = \min(\varrho
_{k^{(i)}}'^{(i,\ell-1)}, {\underline{r}}_{\Theta_i,k^{(i)}}
(\ttheta
'^{(i,\ell)},\ttheta^{(i,\ell)}))$
\STATE$\mathfrak{J}'^{(i,\ell)} =  \{1 \leq j \leq d -1,
\theta
_{\psi_{k^{(i)}}(j)}'^{(i,\ell)} + \varrho_{\psi
_{k^{(i)}}(j)}'^{(i,\ell
)} < b_{\psi_{k^{(i)}}(j)}^{(i)} \}$
\IF{$\mathfrak{J}'^{(i,\ell)} \neq\varnothing$}
\algstore{coupemonalgodimd}
\end{algorithmic}
\end{algorithm}
%
%
\begin{algorithm} 
%
\begin{algorithmic}[1]
\algrestore{coupemonalgodimd}
\STATE$\mathfrak{j}'^{(i,\ell)}_{\mathrm{min}} = \min\mathfrak
{J}'^{(i,\ell)} $
\STATE Define $\ttheta'^{(i,\ell+1)}$ as
\begin{eqnarray*}
\cases{ \theta_{\psi_{k^{(i)}}(j)}'^{(i,\ell+1)} =
a_{\psi_{k^{(i)}}(j)}^{(i)}, &\quad $\mbox{for all $j < \mathfrak{j}_{\mathrm{min}}'^{(i,\ell)}
$},$ \vspace*{2pt}
\cr
\theta_{\psi_{k^{(i)}}(\mathfrak{j}_{\mathrm{min}}'^{(i,\ell
)})}'^{(i,\ell
+1)} =
\theta_{\psi_{k^{(i)}}(\mathfrak{j}_{\mathrm{min}}'^{(i,\ell
)})}'^{(i,\ell)} + \varrho_{\psi_{k^{(i)}}(\mathfrak{j}_{\mathrm
{min}}'^{(i,\ell)})}'^{(i,\ell)},
\vspace*{2pt}
\cr
\theta_{\psi_{k^{(i)}}(j)}'^{(i,\ell+1)} =
\theta_{\psi
_{k^{(i)}}(j)}'^{(i,\ell)},& \quad$\mbox{for all $j >
\mathfrak {j}_{\mathrm{min}}'^{(i,\ell)} $},$ \vspace*{2pt}
\cr
\theta_{{k^{(i)}}}'^{(i,\ell+1)} = b_{{k^{(i)}}}^{(i)}}
\end{eqnarray*}
%
\ELSE
\STATE$\ttheta'^{(i,\ell+1)} = \ttheta'^{(i,\ell)}$
\STATE$\mathfrak{j}'^{(i,\ell)}_{\mathrm{min}} = d$
\ENDIF
\ENDIF
\IF{ $\mathfrak{j}^{(i,\ell)}_{\mathrm{min}} = d$ or $\mathfrak
{j}'^{(i,\ell)}_{\mathrm{min}} = d$}
\STATE$L_i = \ell$ and quit the loop
\ENDIF
\ENDFOR
\IF{$ \mathfrak{j}^{(i,\ell)}_{\mathrm{min}} = d$}
\STATE$a_{k^{(i)}}^{(i+1)} = a_{k^{(i)}}^{(i)} + \varrho_{k^{(i)}
}^{(i,L_i)} $
\ENDIF
\IF{ $\mathfrak{j}'^{(i,\ell)}_{\mathrm{min}} = d$}
\STATE$b_{k^{(i)}}^{(i+1)} = b_{k^{(i)}}^{(i)} - \varrho_{k^{(i)}
}'^{(i,L_i)} $
\ENDIF
\STATE$a^{(i+1)}_j = a^{(i)}_j$ and $b^{(i+1)}_j = b^{(i)}_j$ for all
$j \neq k^{(i)}$
\RETURN$\Theta_{i+1} = \prod_{j=1}^d [a_j^{(i+1)}, b_j^{(i+1)}]$
\algstore{coupemonalgo3}
\end{algorithmic}
\end{algorithm}\vspace*{-15pt}
%
%
\begin{algorithm}[H]
\caption{Rewriting of Algorithm \protect\ref{algoConstructionDimQuelquonque}}
\label{algoConstructionDimQuelquonque2}
\begin{algorithmic}[1]
\algrestore{coupemonalgo3}
\STATE$\Theta_1 = \prod_{j=1}^d [a_j^{(1)},b_j^{(1)}] = \prod_{j=1}^d
[m_j,M_j]$
\FORALL{$i \geq1$}
\IF{there exists $j \in\{1,\ldots,d\}$ such that $ b_j^{(i)} -
a_j^{(i)} > \eta_j$}
\STATE Compute $\Theta_{i+1}$
\ELSE
\STATE Leave the loop and set ${N} = i$
\ENDIF
\ENDFOR
\RETURN
\[
\tttheta= \biggl(\frac{a_1^{(N)} + b_1^{(N)}}{2}, \ldots, \frac
{a_d^{(N)} + b_d^{(N)}}{2} \biggr)
\]
\end{algorithmic}
\end{algorithm}
We begin by proving that (\ref{defdeDelta}) holds.

\begin{lemmee} \label{ClaimPreuveAlgoDim2Kappa0}
For all $i \in\{1,\ldots,N - 1\}$ and $\ell\in\{1,\ldots,L_i\}$,
\[
\sup_{\ttheta, \ttheta' \in\Theta_i} \delta^2 \bigl(\ttheta,
\ttheta'\bigr) \leq h^2(f_{\ttheta^{(i,\ell)}},f_{\ttheta'^{(i,\ell)}}).
\]
\end{lemmee}

\begin{pf}
Recalling that $\underline{R}_j \leq\underline{R}_{\Theta_i,j}$,
\begin{eqnarray*}
\sup_{\ttheta, \ttheta' \in\Theta_{i}} \delta^2 \bigl(\ttheta,
\ttheta'\bigr) &\leq& \sup_{1 \leq j \leq d}
\underline{R}_{\Theta_i,j} \bigl(b_{j}^{(i)} -
a_{j}^{(i)} \bigr)^{\alpha_j}
\\
&\leq& \underline{R}_{\Theta_i,k^{(i)}} \bigl(b_{{k^{(i)}}}^{(i)} -
a_{{k^{(i)}}}^{(i)} \bigr)^{\alpha_{k^{(i)}}}.
\end{eqnarray*}
Now, $\theta^{(i,\ell)}_{k^{(i)}} = a_{k^{(i)}}^{(i)}$ and $\theta
'^{(i,\ell)}_{k^{(i)}} = b_{k^{(i)}}^{(i)}$, and thus
\begin{eqnarray*}
\sup_{\ttheta, \ttheta' \in\Theta_i} \delta^2 \bigl(\ttheta,\ttheta
'\bigr) &\leq & \underline{R}_{\Theta_i,k^{(i)}} \bigl(
\theta'^{(i,\ell)}_{{k^{(i)}}} - \theta^{(i,\ell)}_{{k^{(i)}}}
\bigr)^{\alpha_{k^{(i)}}}
\\
&\leq& \sup_{1 \leq j \leq d} \underline{R}_{\Theta_i,j} \bigl(\theta
'^{(i,\ell)}_{j} - \theta^{(i,\ell)}_{j}
\bigr)^{\alpha_j}
\\
&\leq& h^2(f_{\ttheta^{(i,\ell)}},f_{\ttheta'^{(i,\ell)}}).
\end{eqnarray*}
\upqed\end{pf}

We now show that (\ref{DefintiionBiell}) holds:

\begin{lemmee} \label{PreuveLemmeAlgoDimQuelc}
For all $i \in\{1,\ldots, N-1\}$,
\begin{eqnarray*}
\Theta_i {}\Big\backslash{}\bigcup_{\ell=1}^{L_i}
B^{(i,\ell)} \subset \Theta _{i+1} \subset\Theta_i.
\end{eqnarray*}
\end{lemmee}

\begin{pf}
Since
\begin{eqnarray*}
\varrho_{k^{(i)} }^{(i,L_i)}&\leq&\frac{b^{(i)}_{k^{(i)} } -
a^{(i)}_{k^{(i)} }}{2} \quad\mbox{and}\\
\varrho_{k^{(i)}
}'^{(i,L_i)} &\leq&\frac{b^{(i)}_{k^{(i)} } - a^{(i)}_{k^{(i)} }}{2},
\end{eqnarray*}
we have $\Theta_{i+1} \subset\Theta_i$. We now aim at proving
$\Theta
_i \setminus\bigcup_{\ell=1}^{L_i} B^{(i,\ell)} \subset\Theta_{i+1}$.

We introduce the rectangles
\begin{eqnarray*}
\mathcal{R}_1'^{(i,\ell)} &=& \prod
_{q=1}^d \bigl[\theta _q^{(i,\ell)},
\theta_q^{(i,\ell)} + \varrho_{q}^{(i,\ell)}
\bigr],
\\
\mathcal{R}_2'^{(i,\ell)} &=& \prod
_{q=1}^{k^{(i)}-1} \bigl[\theta _q'^{(i,\ell)},
\theta_q'^{(i,\ell)} + \varrho_{q}'^{(i,\ell)}
\bigr] \times \bigl[\theta_{k^{(i)}}'^{(i,\ell)} -
\varrho _{k^{(i)}}'^{(i,\ell
)}, \theta_{k^{(i)}}'^{(i,\ell)}
\bigr] \\
&&{}\times\prod_{q=k^{(i)}+1}^{d} \bigl[
\theta_q'^{(i,\ell)}, \theta_q'^{(i,\ell
)}
+ \varrho_{q}'^{(i,\ell)} \bigr]
\end{eqnarray*}
and we set
\begin{eqnarray*}
\mathcal{R}_3'^{(i,\ell)}= \cases{
\mathcal{R}_1'^{(i,\ell)},& \quad$\mbox{if $T \bigl({
\ttheta^{(i,\ell
)}},{\ttheta '^{(i,\ell)}}\bigr) > 0$},$
\vspace*{2pt}
\cr
\mathcal{R}_2'^{(i,\ell)},&\quad $
\mbox{if $T \bigl({\ttheta^{(i,\ell
)}},{\ttheta '^{(i,\ell)}}
\bigr) < 0$},$ \vspace*{2pt}
\cr
\mathcal{R}_1'^{(i,\ell)}
\cup\mathcal{R}_2'^{(i,\ell)}, &\quad $\mbox {if $T
\bigl({\ttheta^{(i,\ell)}},{\ttheta'^{(i,\ell)}}\bigr) =
0$}$.}
\end{eqnarray*}
Using that $\Theta_i \cap\mathcal{R}_1'^{(i,\ell)} \subset\mathcal{R}
(\ttheta^{(i,\ell)}, \ttheta'^{(i,\ell)})$, $\Theta_i \cap
\mathcal
{R}_2'^{(i,\ell)} \subset\mathcal{R} (\ttheta'^{(i,\ell)}, \ttheta
^{(i,\ell)})$ together with (\ref{eqInclusionRC1}) yields $\Theta_i
\cap\mathcal{R}_3'^{(i,\ell)} \subset B^{(i,\ell)}$. It is then
sufficient to show
\[
\Theta_i {}\Big\backslash{}\bigcup_{\ell=1}^{L_i}
\mathcal{R}_3'^{(i,\ell)} \subset
\Theta_{i+1}.
\]
Note that either $T(\ttheta^{(i,L_i)},\ttheta'^{(i,L_i)}) \geq0$ or
$T(\ttheta^{(i,L_i)},\ttheta'^{(i,L_i)}) \leq0$. In what follows, we
assume that $T(\ttheta^{(i,L_i)},\ttheta'^{(i,L_i)}) \geq0$ but the
proof is similar if $T(\ttheta^{(i,L_i)},\ttheta'^{(i,L_i)})$ is non-positive.
Without lost of generality, and for the sake of simplicity, we suppose
that $k^{(i)} = d$ and $\psi_{d} (j) = j$ for all $j \in\{1,\ldots
,d-1\}$.
Let
\[
\mathcal{L} = \bigl\{1 \leq\ell\leq L_i, T\bigl(
\ttheta^{(i,\ell
)},\ttheta'^{(i,\ell)}\bigr) \geq0 \bigr\}
\]
and $\ell_1 < \cdots< \ell_r$ be the elements of $\mathcal{L} $.
It is sufficient to prove that
\renewcommand{\theequation}{\arabic{equation}}
\setcounter{equation}{35}
\begin{equation}
\label{eqInclusiondanspreuveannexe1} \Theta_i {}\Big\backslash{}\bigcup
_{\ell=1}^{L_i} \mathcal{R}_3'^{(i,\ell)}
\subset\prod_{q=1}^{d-1}
\bigl[a_q^{(i)}, b_q^{(i)} \bigr]
\times \bigl[a_d^{(i)} + \varrho_d^{(i,L_i)},
b_d^{(i)} \bigr].
\end{equation}
We shall actually prove
\[
\prod_{q=1}^{d-1} \bigl[a_q^{(i)},
b_q^{(i)} \bigr] \times \bigl[a_d^{(i)},
a_d^{(i)} + \varrho_d^{(i,L_i)} \bigr]
\subset\bigcup_{k=1}^{r}
\mathcal{R}_1'^{(i, \ell_k)},
\]
which, in particular, implies (\ref{eqInclusiondanspreuveannexe1}).
Remark now that for all $k \in\{1,\ldots,r\}$, $\theta_d^{(i,\ell_{k})}
= a_d^{(i)} $, and thus
\[
\mathcal{R}_1'^{(i, \ell_k)} = \prod
_{q=1}^{d-1} \bigl[ \theta _q^{(i,\ell_{k})},
\theta_q^{(i,\ell_{k})} + \varrho_{q}^{(i,\ell_k)}
\bigr] \times \bigl[ a_d^{(i)}, a_d^{(i)}
+ \varrho_{d}^{(i,\ell_k)} \bigr].
\]
By using the fact that the sequence $ (\varrho_{d}^{(i,\ell_k)})_k$ is
non-increasing,
\[
\bigl[a_d^{(i)}, a_d^{(i)} +
\varrho_d^{(i,L_i)} \bigr] \subset \bigcap
_{k=1}^{r} \bigl[ a_d^{(i)},
a_d^{(i)} + \varrho_{d}^{(i,\ell_k)} \bigr].
\]
This means that we only need to show
%
\begin{equation}
\label{eqInclusionPreuveDimenQ} \prod_{q=1}^{d-1}
\bigl[a_q^{(i)}, b_q^{(i)} \bigr]
\subset\bigcup_{k=1}^{r} \prod
_{q=1}^{d-1} \bigl[ \theta_q^{(i,\ell_{k})},
\theta _q^{(i,\ell_{k})} + \varrho_{q}^{(i,\ell_k)}
\bigr].
\end{equation}
Let us now define 
for all $p \in\{1,\ldots,d-1\}$, $k_{p,0} = 0$ and by induction for
all integer $\mathfrak{m}$,
\begin{eqnarray*}
k_{p,\mathfrak{m}+1} = \cases{ \inf \bigl\{k > k_{p,\mathfrak{m}},
\mathfrak{j}_{\mathrm
{min}}^{(i,\ell_{k})} > p \bigr\}, \vspace*{2pt}\cr
\hspace*{26pt}$\mbox{if there exists
$k \in\{k_{p,\mathfrak{m}} +1,\ldots,r \}$ such that $\mathfrak{j}_{\mathrm{min}}^{(i,\ell_{k})}
> p$},$ \vspace *{2pt}
\cr
r,  \qquad $\mbox{otherwise.}$}
\end{eqnarray*}
Let $\mathfrak{M}_p$ be the smallest integer $\mathfrak{m}$ such that
$k_{p,\mathfrak{m}} =
r$. Let then for all $\mathfrak{m}\in\{0,\ldots,\mathfrak{M}_p-1\}$,
\[
K_{p,\mathfrak{m}} = \{k_{p,\mathfrak{m}}+1, \ldots, k_{p,\mathfrak{m}+1} \}.
\]
We need the two following claims.

\begin{Claimm} \label{ClaimInclusionKpDansKp}
For all $\mathfrak{m}\in\{0,\ldots, \mathfrak{M}_{p+1}-1\}$, there
exists $\mathfrak{m}'
\in\{0,\ldots, \mathfrak{M}_{p}-1\}$ such that $k_{p,\mathfrak
{m}'+1} \in
K_{p+1,\mathfrak{m}}$.
\end{Claimm}

\begin{pf}
The set $\{\mathfrak{m}' \in\{0,\ldots, \mathfrak{M}_p-1\},
k_{p,\mathfrak{m}'+1} \leq
k_{p+1,\mathfrak{m}+1} \}$ is non-empty and we can thus define the largest
integer $\mathfrak{m}'$ of $\{0,\ldots, \mathfrak{M}_p-1\}$ such that
$ k_{p,\mathfrak{m}
'+1} \leq k_{p+1,\mathfrak{m}+1} $.
We then have
\[
k_{p,\mathfrak{m}'} = \sup \bigl\{k < k_{p,\mathfrak{m}'+1}, \mathfrak{j}_{\mathrm
{min}}^{(i,\ell_k)}
> p \bigr\}.
\]
Since $k_{p,\mathfrak{m}'} < k_{p+1,\mathfrak{m}+1}$,
\begin{eqnarray*}
k_{p,\mathfrak{m}'} &=& \sup \bigl\{k < k_{p+1,\mathfrak{m}+1}, \mathfrak{j}_{\mathrm
{min}}^{(i,\ell_k)}
> p \bigr\}
\\
&\geq& \sup \bigl\{k < k_{p+1,\mathfrak{m}+1}, \mathfrak {j}_{\mathrm
{min}}^{(i,\ell_k)}
> p + 1 \bigr\}
\\
&\geq& k_{p+1,\mathfrak{m}}.
\end{eqnarray*}
Hence, $k_{p,\mathfrak{m}'+1} \geq k_{p,\mathfrak{m}'} + 1 \geq
k_{p+1,\mathfrak{m}} + 1$. Finally,
$k_{p,\mathfrak{m}'+1} \in K_{p,\mathfrak{m}}$.
\end{pf}


\begin{Claimm} \label{ClaimPreuveAlgoDimen2}
Let $\mathfrak{m}' \in\{0,\ldots,\mathfrak{M}_{p+1}-1\}$, $p \in\{
1,\ldots,d-1\}
$. There exists a subset $\M$ of $\{0,\ldots,\mathfrak{M}_p-1\}$ such that
\[
K'_{p} = \{k_{p,\mathfrak{m}+1}, \mathfrak{m}\in\M \}
\subset K_{p+1,\mathfrak{m}'}
\]
and
\[
\bigl[a_{p+1}^{(i)}, b_{p+1}^{(i)} \bigr]
\subset\bigcup_{ k \in K_p'} \bigl[ \theta_{p+1}^{(i,\ell_{k})},
\theta_{p+1}^{(i,\ell_{k})} + \varrho_{p+1}^{(i,\ell_{k })}
\bigr].
\]
\end{Claimm}

\begin{pf}
Thanks to Claim~\ref{ClaimInclusionKpDansKp}, we can define the
smallest integer $\mathfrak{m}_0$ of $\{0,\ldots,\mathfrak{M}_{p}-1\}
$ such that
$k_{p,\mathfrak{m}_0 + 1} \in K_{p+1,\mathfrak{m}'}$, and the largest
integer $\mathfrak{m}_1$ of $\{
0,\ldots,\mathfrak{M}_{p}-1\}$ such that $k_{p, \mathfrak{m}_1 + 1}
\in K_{p+1,\mathfrak{m}
'}$. Define now
\[
\M= \{\mathfrak{m}_0,\mathfrak{m}_0+1,\ldots,
\mathfrak {m}_1 \}.
\]
Note that for all $\mathfrak{m}\in\{\mathfrak{m}_0,\ldots,\mathfrak
{m}_1\}$, $k_{p,\mathfrak{m}+1} \in
K_{p+1,\mathfrak{m}'}$ (this ensues from the fact that the sequence
$(k_{p,\mathfrak{m}
})_{\mathfrak{m}}$ is increasing).

Let $\mathfrak{m}\in\{0,\ldots,\mathfrak{M}_p-1\}$ be such that
$k_{p,\mathfrak{m}} \in
K_{p+1,\mathfrak{m}'}$ and $k_{p,\mathfrak{m}} \neq k_{p+1,\mathfrak
{m}'+1}$. Then $ \mathfrak
{j}_{\mathrm{min}}^{(i,\ell_{k_{p,\mathfrak{m}}})} \leq p + 1$ and
since $ \mathfrak
{j}_{\mathrm{min}}^{(i,\ell_{k_{p,\mathfrak{m}}})} > p $, we get
$\mathfrak
{j}_{\mathrm{min}}^{(i,\ell_{k_{p,\mathfrak{m}}})} = p + 1$.
Consequently,
\[
\theta_{p+1}^{  (i,\ell_{k_{p,\mathfrak{m}}+1} )} = \theta_{p+1}^{
(i,\ell_{k_{p,\mathfrak{m}}} )} +
\varrho_{p+1}^{
(i,\ell_{k_{p,\mathfrak{m}
}} )}.
\]
Now, $\theta_{p+1}^{  (i,\ell_{k_{p,\mathfrak{m}}+1} )} =
\theta_{p+1}^{
(i,\ell_{k_{p,\mathfrak{m}+1}})}$ since $k_{p,\mathfrak{m}}+1$ and
$k_{p,\mathfrak{m}+1}$ belong
together to $K_{p,\mathfrak{m}}$. The set
\[
\bigl[ \theta_{p+1}^{(i,\ell_{k_{p,\mathfrak{m}}})}, \theta _{p+1}^{(i,\ell
_{k_{p,\mathfrak{m}}})}
+ \varrho_{p+1}^{(i,\ell_{k_{p,\mathfrak
{m}} })} \bigr] \cup \bigl[
\theta_{p+1}^{(i,\ell_{k_{p,\mathfrak{m}+1}})}, \theta _{p+1}^{(i,\ell
_{k_{p,\mathfrak{m}+1}})} +
\varrho_{p+1}^{(i,\ell_{k_{p,\mathfrak
{m}+1} })} \bigr]
\]
is thus the interval
\[
\bigl[ \theta_{p+1}^{(i,\ell_{k_{p,\mathfrak{m}}})}, \theta _{p+1}^{(i,\ell
_{k_{p,\mathfrak{m}+1}})}
+ \varrho_{p+1}^{(i,\ell_{k_{p,\mathfrak
{m}+1} })} \bigr].
\]
We apply this argument to each $\mathfrak{m}\in\{\mathfrak
{m}_0+1,\ldots,\mathfrak{m}_1\}$ to derive
that the set
\[
I = \bigcup_{\mathfrak{m}=\mathfrak{m}_0}^{\mathfrak{m}_1} \bigl[
\theta_{p+1}^{(i,\ell_{k_{p,\mathfrak{m}
+1}})}, \theta_{p+1}^{(i,\ell_{k_{p,\mathfrak{m}+1}})} +
\varrho _{p+1}^{(i,\ell
_{k_{p,\mathfrak{m}+1} })} \bigr]
\]
is the interval
\[
I = \bigl[ \theta_{p+1}^{(i,\ell_{k_{p,\mathfrak{m}_0+1}})}, \theta _{p+1}^{(i,\ell_{k_{p,\mathfrak{m}_1+1}})}
+ \varrho_{p+1}^{(i,\ell
_{k_{p,\mathfrak{m}_1+1}
})} \bigr].
\]
The claim is proved if we show that
\[
\bigl[a_{p+1}^{(i)}, b_{p+1}^{(i)} \bigr]
\subset I.
\]
Since $I$ is an interval, it remains to prove that $a_{p+1}^{(i)} \in
I$ and $b_{p+1}^{(i)} \in I$.

We begin to show $a_{p+1}^{(i)} \in I$ by showing that $ a_{p+1}^{(i)}
= \theta_{p+1}^{(i,\ell_{k_{p,\mathfrak{m}_0+1}})}$.
If $k_{p+1,\mathfrak{m}'} = 0$, then $\mathfrak{m}' = 0$ and
$\mathfrak{m}_0 = 0$. Besides, since $1$
and $k_{p,1}$ belong to $K_{p,0}$, we have $\theta_{p+1}^{(i,\ell
_{k_{p,1}})} = \theta_{p+1}^{(i,\ell_{1})}$. Now, $\theta
_{p+1}^{(i,\ell
_{1})} = a_{p+1}^{(i)}$ and thus $a_{p+1}^{(i)} \in I$. We now assume
that $k_{p+1,\mathfrak{m}'} \neq0$.
Since $k_{p,\mathfrak{m}_0} \leq k_{p+1,\mathfrak{m}'}$, there are
two cases.
\begin{itemize}
\item First case: $k_{p,\mathfrak{m}_0} = k_{p+1,\mathfrak{m}'}$. We
then have $\mathfrak
{j}_{\mathrm{min}}^{(i,\ell_{k_{p,\mathfrak{m}_0}})} > p + 1$ and thus
$\theta
_{p+1}^{(i,\ell_{k_{p,\mathfrak{m}_0}+1})}= a_{p+1}^{(i)}$. Since
$k_{p,\mathfrak{m}_0+1}$
and $k_{p,\mathfrak{m}_0}+1$ belong to $K_{p,\mathfrak{m}_0}$,
$\theta_{p+1}^{(i,\ell_{k_{p,\mathfrak{m}_0+1}})} = \theta
_{p+1}^{(i,\ell_{k_{p,\mathfrak{m}
_0}+1})}$ and thus $\theta_{p+1}^{(i,\ell_{k_{p,\mathfrak{m}_0+1}})} =
a_{p+1}^{(i)}$ as wished.
\item Second case: $k_{p,\mathfrak{m}_0} + 1 \leq k_{p+1,\mathfrak
{m}'}$. Then $k_{p+1,\mathfrak{m}'}
\in K_{p,\mathfrak{m}_0}$, and thus
\[
\theta_{p+1}^{(i,\ell_{k_{p,\mathfrak{m}_0}+1})} = \theta _{p+1}^{(i,\ell
_{k_{p+1,\mathfrak{m}'}})}.
\]
Since $\mathfrak{j}_{\mathrm{min}}^{(i,\ell_{k_{p+1,\mathfrak{m}'}})}
> p + 1$, we
have $\theta_{p+1}^{(i,\ell_{k_{p+1,\mathfrak{m}'}} )} + \varrho
_{p+1}^{(i,\ell
_{k_{p+1,\mathfrak{m}'}} )} \geq b_{p+1}^{(i)}$. By using the fact
that the
sequence $( \varrho_{p+1}^{(i,\ell_{k})})_k$ is decreasing, we then deduce
\[
\theta_{p+1}^{(i,\ell_{k_{p,\mathfrak{m}_0} + 1})} + \varrho _{p+1}^{(i,\ell
_{k_{p,\mathfrak{m}_0} + 1} )}
\geq b_{p+1}^{(i)}
\]
and thus
$\mathfrak{j}_{\mathrm{min}}^{(i,\ell_{k_{p,\mathfrak{m}_0} + 1})} >
p + 1$. This
proves that
%
\begin{eqnarray}
\label{EqPreuveThetap} \theta_{p+1}^{(i,\ell_{k_{p,\mathfrak{m}_0} + 2})}= a_{p+1}^{(i)}.
\end{eqnarray}
Let us now show that $k_{p,\mathfrak{m}_0} + 2 \leq k_{p,\mathfrak
{m}_0+1}$. If this is not
true, $k_{p,\mathfrak{m}_0} + 2 \geq k_{p,\mathfrak{m}_0+1} + 1$,
and thus $k_{p,\mathfrak{m}_0} +
1 \geq k_{p,\mathfrak{m}_0+1}$ which means that $k_{p,\mathfrak{m}_0}
+ 1 = k_{p,\mathfrak{m}_0+1}$
(we recall that $(k_{p,\mathfrak{m}})_{\mathfrak{m}}$ is an
increasing sequence of
integers). Since we are in the case where $ k_{p,\mathfrak{m}_0}+1
\leq k_{p+1,\mathfrak{m}
'}$, we have $ k_{p,\mathfrak{m}_0+1} \leq k_{p+1,\mathfrak{m}'}$
which is impossible since
$ k_{p,\mathfrak{m}_0+1} \in K_{p+1,\mathfrak{m}'}$.

Therefore, we use that $k_{p,\mathfrak{m}_0} + 2 \leq k_{p,\mathfrak
{m}_0+1}$ to get
$k_{p,\mathfrak{m}_0} + 2 \in K_{p,\mathfrak{m}_0}$, and thus $\theta
_{p+1}^{(i,\ell
_{k_{p,\mathfrak{m}_0+1} })}= \theta_{p+1}^{(i,\ell_{k_{p,\mathfrak
{m}_0}+2 })}$. We then
deduce from (\ref{EqPreuveThetap}) that $\theta_{p+1}^{(i,\ell
_{k_{p,\mathfrak{m}
_0+1} })} = a_{p+1}^{(i)}$ as wished.
\end{itemize}
We now show that $b_{p+1}^{(i)} \in I$ by showing that $\theta
_{p+1}^{(i,\ell_{k_{p,\mathfrak{m}_1+1}})} + \varrho_{p+1}^{(i,\ell
_{k_{p,\mathfrak{m}_1+1}
})} \geq b_{p+1}^{(i)}$. If $\mathfrak{m}_1 = \mathfrak{M}_p-1$,
\[
\theta_{p+1}^{(i,\ell_{k_{p,\mathfrak{m}_1+1}})} + \varrho _{p+1}^{(i,\ell_{k_{p,\mathfrak{m}
_1+1} })}
= \theta_{p+1}^{(i,\ell_{r})} + \varrho_{p+1}^{(i,\ell_{r })}
= \theta_{p+1}^{(i,L_i)} + \varrho_{p+1}^{(i,L_{i })}.
\]
Since $\mathfrak{J}^{(i, L_i)} = \varnothing$, we have $\theta
_{p+1}^{(i,L_i)} + \varrho_{p+1}^{(i,L_{i })} \geq b_{p+1}^{(i)}$,
which proves the result.

We now assume that $\mathfrak{m}_1 < \mathfrak{M}_p-1$.
We begin to prove that $k_{p,\mathfrak{m}_1+1} = k_{p+1,\mathfrak
{m}'+1}$. If this equality
does not hold, we derive from the inequalities $k_{p,\mathfrak{m}_1+1}
\leq
k_{p+1,\mathfrak{m}'+1} < k_{p,\mathfrak{m}_1+2}$, that
$k_{p,\mathfrak{m}_1+1} + 1 \leq k_{p+1,\mathfrak{m}
'+1}$ and thus $k_{p+1,\mathfrak{m}'+1} \in K_{p,\mathfrak{m}_1+1}$.
Since $\mathfrak
{j}_{\mathrm{min}}^{(i, \ell_{k_{p+1,\mathfrak{m}'+1}})} > p +1$,
\[
\theta_{p+1}^{(i,\ell_{k_{p+1,\mathfrak{m}'+1}} )} + \varrho _{p+1}^{(i,\ell
_{k_{p+1,\mathfrak{m}'+1}} )}
\geq b_{p+1}^{(i)}.
\]
Hence,
\[
\theta_{p+1}^{(i,\ell_{(k_{p,\mathfrak{m}_1+1})+1 })} + \varrho _{p+1}^{(i,\ell
_{(k_{p,\mathfrak{m}_1+1})+1 } )}
\geq b_{p+1}^{(i)}\qquad \mbox {which implies } \mathfrak{j}_{\mathrm{min}}^{(i,\ell_{(k_{p,\mathfrak
{m}_1+1})+1 })}
> p + 1.
\]
Since
\[
k_{p+1,\mathfrak{m}'+1} = \inf \bigl\{k > k_{p+1,\mathfrak{m}'}, \mathfrak{j}_{\mathrm
{min}}^{(i,\ell_{k})}
> p + 1 \bigr\}
\]
and $k_{p,\mathfrak{m}_1+1} + 1 > k_{p+1,\mathfrak{m}'}$, we have $
k_{p+1,\mathfrak{m}'+1} \leq
k_{p,\mathfrak{m}_1+1} + 1 $.
Moreover, since $ k_{p+1,\mathfrak{m}'+1} \geq k_{p,\mathfrak{m}_1+1}
+ 1 $, we have
$k_{p,\mathfrak{m}_1+1} + 1 = k_{p+1,\mathfrak{m}'+1}$. Consequently,
\[
k_{p,\mathfrak{m}_1+2} = \inf \bigl\{k > k_{p,\mathfrak{m}_1+1}, \mathfrak{j}_{\mathrm
{min}}^{(i,\ell_{k})}
> p \bigr\} = k_{p+1,\mathfrak{m}'+1}.
\]
This is impossible because $ k_{p+1,\mathfrak{m}'+1} < k_{p,\mathfrak
{m}_1+2}$, which
finally implies that $k_{p,\mathfrak{m}_1+1} = k_{p+1,\mathfrak{m}'+1}$.

We then deduce from this equality,
\[
\mathfrak{j}_{\mathrm{min}}^{(i,\ell_{k_{p,\mathfrak{m}_1+1}})} = \mathfrak {j}_{\mathrm{min}}^{(i,\ell_{k_{p+1,\mathfrak{m}'+1}})}
> p + 1.
\]
Hence, $ \theta_{p+1}^{(i, \ell_{{k_{p,\mathfrak{m}_1+1} }} )} +
\varrho
_{p+1}^{(i, \ell_{{k_{p,\mathfrak{m}_1+1} }} )} \geq b_{p+1}^{(i)} $
and thus $b_{p+1}^{(i)} \in I$. This completes the proof.
\end{pf}

We now return to the proof of Lemma~\ref{PreuveLemmeAlgoDimQuelc} and
prove by induction on $p$ the following result. For all $p \in\{
1,\ldots
,d-1\}$ and all $\mathfrak{m}\in\{0,\ldots,\mathfrak{M}_p-1\}$,
%
\begin{equation}
\label{EqInclusionDansPreuve} \prod_{q=1}^{p}
\bigl[a_q^{(i)}, b_q^{(i)} \bigr]
\subset\bigcup_{k\in
K_{p,\mathfrak{m}}} \prod
_{q=1}^p \bigl[ \theta_q^{(i,\ell_{k})},
\theta _q^{(i,\ell_{k})} + \varrho_{q}^{(i,\ell_k)}
\bigr].
\end{equation}
Note that {(\ref{eqInclusionPreuveDimenQ})} follows from this inclusion
when $p = d -1$ and $\mathfrak{m}= 0$.

We begin to prove (\ref{EqInclusionDansPreuve}) for $p = 1$ and all
$\mathfrak{m}
\in\{0,\ldots,\mathfrak{M}_1-1\}$. For all $k \in\{k_{1,\mathfrak
{m}} +1, \ldots,
k_{1,\mathfrak{m}+1} - 1\}$, $\mathfrak{j}_{\mathrm{min}}^{(i,\ell
_{k})} \leq1$,
and thus
\[
\theta_1^{(i,\ell_{k+1})} \in \bigl\{ \theta_1^{(i,\ell_{k})},
\theta _1^{(i,\ell_{k})} + \varrho_{1}^{(i,\ell_{k})}
\bigr\}.
\]
This implies that the set
\[
\bigcup_{k=k_{1,\mathfrak{m}}+1}^{k_{1,\mathfrak{m}+1}} \bigl[
\theta_1^{(i,\ell_{k})}, \theta_1^{(i,\ell_{k})} +
\varrho_{1}^{(i,\ell_{k})} \bigr]
\]
is an interval. Now,
$\theta_1^{(i,\ell_{k_{1,\mathfrak{m}}+1})} = a_{1}^{(i)}$, $
\theta_1^{(i,\ell
_{k_1,\mathfrak{m}+1})} + \varrho_{1}^{(i,\ell_{k_1,\mathfrak
{m}+1})}\geq b_1^{(i)}$ since
$\mathfrak{j}_{\mathrm{min}}^{(i,\ell_{k_1,\mathfrak{m}+1})} > 1$.
Therefore,
\[
\bigl[a_{1}^{(i)}, b_1^{(i)} \bigr]
\subset\bigcup_{k=k_{1,\mathfrak{m}
}+1}^{k_{1,\mathfrak{m}+1}} \bigl[
\theta_1^{(i,\ell_{k})}, \theta _1^{(i,\ell
_{k})} +
\varrho_{1}^{(i,\ell_{k})} \bigr],
\]
which establishes (\ref{EqInclusionDansPreuve}) when $p = 1$.

Let now $p \in\{1,\ldots,d-2\}$ and assume that for all $\mathfrak
{m}\in\{
0,\ldots, \mathfrak{M}_p-1\}$,
\begin{eqnarray*}
\prod_{q=1}^{p} \bigl[a_q^{(i)},
b_q^{(i)} \bigr] \subset\bigcup
_{k\in
K_{p,\mathfrak{m}}} \prod_{q=1}^p
\bigl[ \theta_q^{(i,\ell
_{k})},\theta_q^{(i,\ell
_{k})}
+ \varrho_{q}^{(i,\ell_k)} \bigr].
\end{eqnarray*}
Let $\mathfrak{m}' \in\{0,\ldots, \mathfrak{M}_{p+1}-1\}$. We shall
show that
\begin{eqnarray*}
\prod_{q=1}^{p+1} \bigl[a_q^{(i)},
b_q^{(i)} \bigr] \subset\bigcup
_{k
\in K_{p+1,\mathfrak{m}'}} \prod_{q=1}^{p+1}
\bigl[ \theta _q^{(i,\ell_{k})}, \theta_q^{(i,\ell_{k})}
+ \varrho_{q}^{(i,\ell_k)} \bigr].
\end{eqnarray*}
Let $\mathbf{x} \in\prod_{q=1}^{p+1}  [a_q^{(i)},
b_q^{(i)} ]$.
By using Claim~\ref{ClaimPreuveAlgoDimen2}, there exists $\mathfrak
{m}\in\{
0,\ldots,\mathfrak{M}_{p}-1\}$ such that
\[
x_{p+1} \in \bigl[ \theta_{p+1}^{(i,\ell_{k_{p,\mathfrak{m}+1}})}, \theta
_{p+1}^{(i,\ell_{k_{p,\mathfrak{m}+1}})} + \varrho_{p+1}^{(i,\ell
_{k_{p,\mathfrak{m}+1}})} \bigr]
\]
and such that $k_{p,\mathfrak{m}+1} \in K_{p+1,\mathfrak{m}'}$.
By using the induction assumption, there exists $k \in K_{p,\mathfrak
{m}}$ such that
\[
\mathbf{x} = (x_1,\ldots,x_p) \in\prod
_{q=1}^p \bigl[ \theta _q^{(i,\ell_{k})},
\theta_q^{(i,\ell_{k})} + \varrho_{q}^{(i,\ell_k)}
\bigr].
\]
Since $k \in K_{p,\mathfrak{m}}$, $\theta_{p+1}^{(i,\ell_{k})} =
\theta
_{p+1}^{(i,\ell_{k_{p,\mathfrak{m}+1}})} $ and
$ \varrho_{p+1}^{(i,\ell_{k_{p,\mathfrak{m}+1}})} \leq\varrho
_{p+1}^{(i,\ell
_{k})}$. Hence,
\[
x_{p+1} \in \bigl[ \theta_{p+1}^{(i,\ell_{k})}, \theta
_{p+1}^{(i,\ell
_{k})} + \varrho_{p+1}^{(i,\ell_{k})} \bigr].
\]
We finally use the claim below to show that $k \in K_{p+1,\mathfrak
{m}'}$ which
concludes the proof.
\end{pf}

\begin{Claimm} \label{ClaimPreuveAlgoDimen2deuxieme}
Let $\mathfrak{m}\in\{0,\ldots,\mathfrak{M}_p-1\}$ and $\mathfrak
{m}' \in\{0,\ldots,
\mathfrak{M}_{p+1}-1\}$. If $k_{p,\mathfrak{m}+1} \in
K_{p+1,\mathfrak{m}'}$, then $K_{p,\mathfrak{m}
} \subset K_{p+1,\mathfrak{m}'}$.
\end{Claimm}

\begin{pf}
We have
\[
k_{p+1,\mathfrak{m}'} = \sup \bigl\{ k < k_{p+1,\mathfrak{m}'+1}, \mathfrak{j}_{\mathrm
{min}}^{(i,\ell_{k})}
> p + 1 \bigr\}.
\]
Since $k_{p,\mathfrak{m}+1} > k_{p+1,\mathfrak{m}'}$,
\begin{eqnarray*}
k_{p+1,\mathfrak{m}'} &=& \sup \bigl\{ k < k_{p,\mathfrak{m}+1}, \mathfrak{j}_{\mathrm
{min}}^{(i,\ell_{k})}
> p + 1 \bigr\}
\\
&\leq& \sup \bigl\{ k < k_{p,\mathfrak{m}+1}, \mathfrak {j}_{\mathrm
{min}}^{(i,\ell_{k})}
> p \bigr\}
\\
&\leq& k_{p,\mathfrak{m}}.
\end{eqnarray*}
We then derive from the inequalities $k_{p+1,\mathfrak{m}'} \leq
k_{p,\mathfrak{m}}$ and
$k_{p,\mathfrak{m}+1} \leq k_{p+1
, \mathfrak{m}'+1}$ that $K_{p,\mathfrak{m}} \subset K_{p+1,\mathfrak{m}'}$.
\end{pf}
\end{appendix}

\section*{Acknowledgements} The author acknowledges the support of the
French Agence Nationale de la Recherche (ANR), under grant Calibration
(ANR 2011 BS01 010 01). We are thankful to Yannick Baraud for his
numerous comments and his valuable suggestions.



\printhistory
\end{document}